\documentclass[draft]{amsart}

\usepackage{amssymb}
\usepackage[all]{xy}

\newtheorem{thm}{Theorem}[section]
\newtheorem{cor}[thm]{Corollary}
\newtheorem{prop}[thm]{Proposition}
\newtheorem{lem}[thm]{Lemma}
\newtheorem{claim}[thm]{Claim}

\theoremstyle{definition}
\newtheorem{dfn}[thm]{Definition}
\newtheorem{ex}[thm]{Example}

\newtheorem{const}[thm]{Construction}
\newtheorem{condition}[thm]{Condition}

\theoremstyle{remark}
\newtheorem{rem}[thm]{Remark}
\newtheorem{caution}[thm]{Caution}

\numberwithin{equation}{section}

\begin{document}

\title[Frobenius condition on a pretriangulated category]{Frobenius condition on a pretriangulated category, and triangulation on the associated stable category}

\author{Hiroyuki NAKAOKA}
\address{Graduate School of Mathematical Sciences, The University of Tokyo 
3-8-1 Komaba, Meguro, Tokyo, 153-8914 Japan}

\email[Hiroyuki NAKAOKA]{deutsche@ms.u-tokyo.ac.jp}

\thanks{The author wishes to thank Professor Toshiyuki Katsura for his encouragement}
\thanks{The author wishes to thank Professor Kiriko Kato and Professor Osamu Iyama for their useful comments and advices.}

\begin{abstract}
As shown by Happel, from any Frobenius exact category, we can construct a triangulated category as a stable category. On the other hand, it was shown by Iyama and Yoshino that if a pair of subcategories $\mathcal{D}\subseteq\mathcal{Z}$ in a triangulated category satisfies certain conditions (i.e., $(\mathcal{Z},\mathcal{Z})$ is a $\mathcal{D}$-mutation pair), then $\mathcal{Z}/\mathcal{D}$ becomes a triangulated category.
In this article, we consider a simultaneous generalization of these two constructions.
\end{abstract}

\maketitle

\section{Introduction and Preliminaries}
Throughout this article, we fix an additive category $\mathcal{C}$.
Any subcategory of $\mathcal{C}$ will be assumed to be full, additive and replete.
A subcategory is called {\it replete} if it is closed under isomorphisms.

When we say $\mathcal{Z}$ is an exact category, we only consider an extension-closed subcategory of an abelian category.

For any category $\mathcal{K}$, we write abbreviately $K\in\mathcal{K}$, to indicate that $K$ is an object of $\mathcal{K}$.
For any $K,L\in\mathcal{K}$, let $\mathcal{K}(K,L)$ denote the set of morphisms from $K$ to $L$.
If $\mathcal{M},\mathcal{N}$ are full subcategories of $\mathcal{K}$, then $\mathcal{K}(\mathcal{M},\mathcal{N})=0$ means that $\mathcal{K}(M,N)=0$ for any $M\in\mathcal{M}$ and $N\in\mathcal{N}$.
Similarly, $\mathcal{K}(K,\mathcal{N})=0$ means $\mathcal{K}(K,N)=0$ for any $N\in\mathcal{N}$.

If $\mathcal{K}$ is an additive category and $\mathcal{L}$ is a full additive replete subcategory which is closed under finite direct summands, then $\mathcal{K}/\mathcal{L}$ denotes the quotient category of $\mathcal{K}$ by the ideal generated by $\mathcal{L}$. The image of $f\in\mathcal{K}(X,Y)$ will be denoted by $\underline{f}\in\mathcal{K}/\mathcal{L}(X,Y)$.

\bigskip

As shown by Happel \cite{H}, If we are given a Frobenius exact category $\mathcal{E}$, then the stable category $\mathcal{E}/\mathcal{I}$, where $\mathcal{I}$ is the full subcategory of injectives, carries a structure of a triangulated category. 

On the other hand, it was shown by Iyama and Yoshino that if $\mathcal{D}\subseteq\mathcal{Z}$ is a pair of subcategories in a triangulated category $\mathcal{C}$ such that $(\mathcal{Z},\mathcal{Z})$ is a $\mathcal{D}$-mutation pair, then the quotient category $\mathcal{Z}/\mathcal{D}$ becomes a triangulated category.
By definition, $(\mathcal{Z},\mathcal{Z})$ is a $\mathcal{D}$-mutation pair if it satisfies
\begin{enumerate}
\item $\mathcal{C}(\mathcal{Z},\mathcal{D}[1])=\mathcal{C}(\mathcal{D},\mathcal{Z}[1])=0$,
\item For any object $X\in\mathcal{Z}$, there exists a distinguished triangle
\[ X\rightarrow D\rightarrow Z\rightarrow \Sigma X \]
with $D\in\mathcal{D}$ and $Z\in\mathcal{Z}$,
\item For any object $Z\in\mathcal{Z}$, there exists a distinguished triangle
\[ X\rightarrow D\rightarrow Z\rightarrow \Sigma X \]
with $X\in\mathcal{Z}$ and $D\in\mathcal{D}$.
\end{enumerate}

In this article, we make a simultaneous generalization of these two constructions, by using a slight modification of a {\it pretriangulated category} in \cite{BR}. To emphasize this modification, we call it a \lq pseudo-'triangulated category. As in Definition \ref{DefPreTr}, a pseudo-triangulated category is an additive category $\mathcal{C}$ with a {\it pseudo-triangulation} $(\Sigma,\Omega,\triangleright,\triangleleft,\psi)$.

As in Example \ref{ExTwoCases}, a pseudo-triangulated category $\mathcal{C}$ is abelian if and only if $\Sigma=\Omega=0$, and $\mathcal{C}$ is triangulated if and only if $\Sigma\cong\Omega^{-1}$. An {\it extension} in $\mathcal{C}$ is a simultaneous generalization of a short exact sequence in the abelian case, and a distinguished triangle in the triangulated case (Definition \ref{DefExtension}).
For an extension-closed subcategory $\mathcal{Z}\subseteq\mathcal{C}$, we define the {\it Frobenius condition} on it (Definition \ref{DefFrobCond}). This is equivalent to the ordinary Frobenius condition in the case of $\Sigma=\Omega=0$, and related to the existence of a mutation pair in the triangulated case (Example \ref{ExFrobCond} and Corollary \ref{CorFrobenius}). As a main theorem, in Theorem \ref{MainThm}, we show if $\mathcal{Z}$ is Frobenius, then the associated stable category becomes a triangulated category. In the above two cases, this recovers the Happel's and Iyama-Yoshino's constructions, respectively.

\begin{center}
\begin{tabular}
[c]{|c|c|c|}\hline
& $\Sigma=\Omega=0$ & $\Sigma\cong\Omega^{-1}$ \\\hline
Pretriangulated & abelian & triangulated \\\hline
Extension & short exact sequence & distinguished triangle \\\hline
Frobenius condition & Frobenius condition & Corollary \ref{CorFrobenius} \\\hline
Theorem \ref{MainThm} & Happel's construction & Iyama-Yoshino's construction \\\hline
\end{tabular}
\end{center}

\section{One-sided triangulated categories}

\begin{dfn}[right triangluation cf. \cite{BM}, \cite{BR}]\label{DefRTR}
Let $\Sigma\colon\mathcal{C}\rightarrow\mathcal{C}$ be an additive endofunctor, and let $\mathcal{RT}(\mathcal{C},\Sigma)$ be the category of diagrams of the form
\[ A\overset{f}{\longrightarrow}B\overset{g}{\longrightarrow}C\overset{h}{\longrightarrow}\Sigma A. \]
A morphism from $A\overset{f}{\longrightarrow}B\overset{g}{\longrightarrow}C\overset{h}{\longrightarrow}\Sigma A$ to $A^{\prime}\overset{f^{\prime}}{\longrightarrow}B^{\prime}\overset{g^{\prime}}{\longrightarrow}C^{\prime}\overset{h^{\prime}}{\longrightarrow}\Sigma A^{\prime}$ is a triplet $(a,b,c)$ of morphisms $a\in\mathcal{C}(A,A^{\prime})$, $b\in\mathcal{C}(B,B^{\prime})$ and $c\in\mathcal{C}(C,C^{\prime})$, satisfying
\[ b\circ f=f^{\prime}\circ a,\quad c\circ g=g^{\prime}\circ b,\quad \Sigma a\circ h=h^{\prime}\circ c. \]

A pair $(\Sigma, \triangleright)$ of $\Sigma$ and a full replete subcategory $\triangleright\subseteq\mathcal{RT}(\mathcal{C},\Sigma)$ is called a {\it right triangulation} on $\mathcal{C}$ if it satisfies the following conditions. 
Remark that $\Sigma$ is not necessarily an equivalence.
\begin{itemize}
\item[(RTR1)]
For any $A\in\mathcal{C}$, $0\rightarrow A\overset{\mathrm{id}_A}{\longrightarrow}A\rightarrow \Sigma0=0$ is in $\triangleright$.
For any morphism $f\in\mathcal{C}(A,B)$, there exists an object $A\overset{f}{\longrightarrow}B\overset{g}{\longrightarrow}C\overset{h}{\longrightarrow}\Sigma A$ in $\triangleright$.

\item[(RTR2)]If $A\overset{f}{\longrightarrow}B\overset{g}{\longrightarrow}C\overset{h}{\longrightarrow}\Sigma A$ is in $\triangleright$, then $B\overset{g}{\longrightarrow}C\overset{h}{\longrightarrow}\Sigma A\overset{-\Sigma f}{\longrightarrow}\Sigma B$ is also in $\triangleright$.

\item[(RTR3)]If we are given two objects $A\overset{f}{\longrightarrow}B\overset{g}{\longrightarrow}C\overset{h}{\longrightarrow}\Sigma A$ and $A^{\prime}\overset{f^{\prime}}{\longrightarrow}B^{\prime}\overset{g^{\prime}}{\longrightarrow}C^{\prime}\overset{h^{\prime}}{\longrightarrow}\Sigma A^{\prime}$ in $\triangleright$ and two morphisms $a\in\mathcal{C}(A,A^{\prime})$ and $b\in\mathcal{C}(B,B^{\prime})$ satisfying $b\circ f=f^{\prime}\circ a$, then there exists $c\in\mathcal{C}(C,C^{\prime})$ such that $(a,b,c)$ is a morphism in $\triangleright$.
\item[(RTR4)]Let
\begin{eqnarray*}
A\overset{f}{\longrightarrow}B\overset{g}{\longrightarrow}C\overset{h}{\longrightarrow}\Sigma A,\\
A\overset{\ell}{\longrightarrow}M\overset{m}{\longrightarrow}B^{\prime}\overset{n}{\longrightarrow}\Sigma A,\\
A^{\prime}\overset{\ell^{\prime}}{\longrightarrow}M\overset{m^{\prime}}{\longrightarrow}B\overset{n^{\prime}}{\longrightarrow}\Sigma A^{\prime}
\end{eqnarray*}
be objects in $\triangleright$, satisfying $m^{\prime}\circ\ell=f$.

Then there exist $g^{\prime}\in\mathcal{C}(B^{\prime},C)$ and $h^{\prime}\in\mathcal{C}(C,\Sigma A^{\prime})$ such that
\begin{eqnarray*}
h^{\prime}\circ g=n^{\prime}&,&h\circ g^{\prime}=n,\\
g^{\prime}\circ m=g\circ m^{\prime}&,&(\Sigma\ell)\circ h+(\Sigma\ell^{\prime})\circ h^{\prime}=0,
\end{eqnarray*}
and
\[ A^{\prime}\overset{f^{\prime}}{\longrightarrow}B^{\prime}\overset{g^{\prime}}{\longrightarrow}C^{\prime}\overset{h^{\prime}}{\longrightarrow}\Sigma A^{\prime} \]
is an object in $\triangleright$. Here we put $f^{\prime}=m\circ\ell^{\prime}$.
\[
\xy
(-30,10)*+{A}="2";
(-20,0)*+{M}="4";
(-10,-10)*+{B^{\prime}}="6";
(10,-10)*+{\Sigma A}="8";
(10,10)*+{\Sigma A^{\prime}}="10";
(-10,10)*+{B}="12";
(0,0)*+{C}="14";
(-30,-10)*+{A^{\prime}}="18";
(20,0)*+{\Sigma M}="20";
(-20,14)*+{}="22";
(-20,-14)*+{}="24";
(0,14)*+{}="26";
(0,-14)*+{}="28";
{\ar_{\ell} "2";"4"};
{\ar^{m} "4";"6"};
{\ar_{n} "6";"8"};
{\ar^{f} "2";"12"};
{\ar_{g} "12";"14"};
{\ar^{h} "14";"8"};
{\ar^{\ell^{\prime}} "18";"4"};
{\ar_{m^{\prime}} "4";"12"};
{\ar^{n^{\prime}} "12";"10"};
{\ar^{\Sigma\ell^{\prime}} "10";"20"};
{\ar_{-\Sigma\ell} "8";"20"};
{\ar@{-->}^{g^{\prime}} "6";"14"};
{\ar@{-->}_{f^{\prime}} "18";"6"};
{\ar@{-->}_{h^{\prime}} "14";"10"};
{\ar@{}|\circlearrowright "6";"12"};
{\ar@{}|\circlearrowright "4";"22"};
{\ar@{}|\circlearrowright "4";"24"};
{\ar@{}|\circlearrowright "14";"26"};
{\ar@{}|\circlearrowright "14";"28"};
{\ar@{}|\circlearrowright "8";"10"};
\endxy
\]
\end{itemize}
If $(\Sigma,\triangleright)$ is a right triangulation on $\mathcal{C}$, we call $(\mathcal{C},\Sigma,\triangleright)$ a {\it right triangulated category}.
\end{dfn}

\begin{caution}
Conditions {\rm (RTR4)} is slightly different from that in \cite{BM}.
\end{caution}

\begin{dfn}[left triangulation]
Let $\Omega\colon\mathcal{C}\rightarrow\mathcal{C}$ be an additive endofunctor, and let $\mathcal{LT}(\mathcal{C},\Omega)$ be the category of diagrams of the form
\[ \Omega C\overset{e}{\longrightarrow}A\overset{f}{\longrightarrow}B\overset{g}{\longrightarrow}C. \]
A morphism in $\mathcal{LT}(\mathcal{C},\Omega)$ is defined similarly as in Definition \ref{DefRTR}.
A pair $(\Omega,\triangleleft)$ satisfying conditions {\rm (LTR1)}, {\rm (LTR2)}, {\rm (LTR3)} and {\rm (LTR4)} which are dual to {\rm (RTR1)}, {\rm (RTR2)}, {\rm (RTR3)} and {\rm (RTR4)} respectively, is called a {\it left triangulation} on $\mathcal{C}$, and $(\mathcal{C},\Omega,\triangleleft)$ is called a {\it left triangulated category}.
\end{dfn}

Similarly to the triangulated case, the following are satisfied.
\begin{prop}\label{PropExact}
Let $\mathcal{C}$ be an additive category.
\begin{enumerate}
\item If $(\Sigma, \triangleright)$ is a right triangulation on $\mathcal{C}$, then for any object $A\rightarrow B\rightarrow C\rightarrow\Sigma A$ in $\triangleright$ and for any $E\in\mathcal{C}$, the induced sequence
\[ \mathcal{C}(A,E)\leftarrow\mathcal{C}(B,E)\leftarrow\mathcal{C}(C,E)\leftarrow\mathcal{C}(\Sigma A,E)\leftarrow\mathcal{C}(\Sigma B,E)\leftarrow\cdots \]
is exact.
\item Dually for a left triangulation.
\end{enumerate}
\end{prop}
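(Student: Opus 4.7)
The plan is to follow the standard argument for triangulated categories, adapted to the one-sided setting. First, I would check that the sequence is a complex by showing $g \circ f = 0$. Next, I would use (RTR3) to establish exactness at $\mathcal{C}(B,E)$. Finally, I would use (RTR2) to rotate the triangle and apply the previous step at all higher positions. The left-triangulated case follows by the dual argument.

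For the first step, I would apply (RTR3) to the triangles $A \xrightarrow{\mathrm{id}_A} A \to 0 \to \Sigma A$ (which lies in $\triangleright$ by (RTR1) together with one application of (RTR2)) and $A \xrightarrow{f} B \xrightarrow{g} C \xrightarrow{h} \Sigma A$, with the left square filled in by $(a,b) = (\mathrm{id}_A, f)$. The only possible morphism $c \colon 0 \to C$ is zero, and commutativity of the middle square then forces $g \circ f = c \circ 0 = 0$. So the induced sequence is a complex.

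For exactness at $\mathcal{C}(B,E)$, given $\beta \colon B \to E$ with $\beta \circ f = 0$, I would apply (RTR3) to the morphism $(0, \beta, ?)$ from $A \xrightarrow{f} B \xrightarrow{g} C \xrightarrow{h} \Sigma A$ to the triangle $0 \to E \xrightarrow{\mathrm{id}_E} E \to \Sigma 0$ provided directly by (RTR1). The hypothesis $\beta \circ f = 0$ is exactly the commutativity of the leftmost square, so (RTR3) supplies $\gamma \colon C \to E$ such that $(0, \beta, \gamma)$ is a morphism of triangles; in particular $\gamma \circ g = \mathrm{id}_E \circ \beta = \beta$, which is what we need.

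For exactness at $\mathcal{C}(C,E)$ and all further terms, I would repeatedly invoke (RTR2) to rotate the triangle; applying the previous paragraph to $B \xrightarrow{g} C \xrightarrow{h} \Sigma A \xrightarrow{-\Sigma f} \Sigma B$ gives exactness at $\mathcal{C}(C,E)$, and iterating handles $\mathcal{C}(\Sigma A,E)$, $\mathcal{C}(\Sigma B,E)$, and beyond. The alternating signs introduced by (RTR2) do not affect exactness. The main point to watch is that, because $\Sigma$ is not assumed to be an equivalence, one cannot rotate triangles backwards; however (RTR2) provides enough forward rotation for this argument, so the one-sidedness is not actually an obstacle here. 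Statement (2) for a left triangulation is then obtained by passing to $\mathcal{C}^{\mathrm{op}}$ and dualizing.
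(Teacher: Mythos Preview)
Your argument is correct and is exactly the standard proof; the paper itself offers no argument beyond ``Left to the reader,'' so there is nothing further to compare.
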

\begin{proof}
Left to the reader.
\end{proof}

\section{Pseudo-triangulated category}

In this section, we introduce a notion unifying triangulated categories and abelian categories. We make a slight modification of the pretriangulated category in \cite{BR}, for the sake of Example \ref{ExTwoCases}. We call it a \lq pseudo-'triangulated category, to make the reader beware of this modification.
Roughly speaking, a pseudo-triangulated category is an additive category endowed with right and left triangulated triangulations, satisfying some gluing conditions (Definition \ref{DefPreTr}).
\begin{dfn}\label{DefMorphProperty}
Let $(\Sigma,\triangleright)$ be a right triangulation on $\mathcal{C}$, and let $f\colon A\rightarrow B$ be any morphism in $\mathcal{C}$.
\begin{enumerate}
\item $f$ is $\Sigma$-{\it null} if it factors through some object in $\Sigma\mathcal{C}$.
\item $f$ is $\Sigma$-{\it epic} if for any $B^{\prime}\in\mathcal{C}$ and any $b\in\mathcal{C}(B,B^{\prime})$, $b\circ f=0$ implies $b$ is $\Sigma$-null.
\end{enumerate}
\end{dfn}
For a left triangulation $(\Omega,\triangleleft)$, dually we define $\Omega$-{\it null} morphisms and $\Omega$-{\it monic} morphisms.

\begin{rem}
For any morphism $f\in\mathcal{C}(A,B)$, the following are equivalent.
\begin{enumerate}
\item $f$ is $\Sigma$-epic.
\item There exists an object in $\triangleright$
\[ A\overset{f}{\longrightarrow}B\overset{g}{\longrightarrow}C\rightarrow\Sigma A \]
such that $g$ is $\Sigma$-null.
\item For any object in $\triangleright$
\[ A\overset{f}{\longrightarrow}B\overset{g}{\longrightarrow}C\rightarrow\Sigma A, \]
$g$ becomes $\Sigma$-null.
\end{enumerate}

Dually for $\Omega$-monics.
\end{rem}

\begin{dfn}\label{DefPreTr}
A pseudo-triangulation $(\Sigma,\Omega,\triangleright,\triangleleft,\psi)$ on $\mathcal{C}$ is a pair $(\Sigma,\triangleright)$ and $(\Omega,\triangleleft)$ of right and left triangulations, together with an adjoint natural isomorphism
\[ \psi_{A,B}\colon\mathcal{C}(\Omega A,B)\overset{\cong}{\longrightarrow}\mathcal{C}(A,\Sigma B) \quad(A,B\in\mathcal{C}), \]
which satisfies the following gluing conditions {\rm (G1)} and {\rm (G2)}.
\begin{enumerate}
\item[{\rm (G1)}] If $g\in\mathcal{C}(B,C)$ is $\Sigma$-epic, then for any objects
\begin{eqnarray*}
\Omega C\overset{e}{\longrightarrow}A\overset{f}{\longrightarrow}B\overset{g}{\longrightarrow}C&\in&\triangleleft,\\
A\overset{f}{\longrightarrow}B\overset{g^{\prime}}{\longrightarrow}C^{\prime}\overset{h^{\prime}}{\longrightarrow}\Sigma A&\in&\triangleright,
\end{eqnarray*}
there exists an isomorphism $c\in\mathcal{C}(C^{\prime},C)$ such that
\[ c\circ g^{\prime}=g\ \ \text{and}\ \ -\psi(e)\circ c=h^{\prime}. \]
\[
\xy
(-14,6)*+{A}="0";
(0,6)*+{B}="2";
(14,6)*+{C^{\prime}}="4";
(28,6)*+{\Sigma A}="6";
(-28,-6)*+{\Omega C}="-12";
(-14,-6)*+{A}="10";
(0,-6)*+{B}="12";
(14,-6)*+{C}="14";
(24,-2)*+{}="15";
{\ar^{f} "0";"2"};
{\ar^{g^{\prime}} "2";"4"};
{\ar^{h^{\prime}} "4";"6"};
{\ar_{e} "-12";"10"};
{\ar_{f} "10";"12"};
{\ar_{g} "12";"14"};
{\ar@{=} "0";"10"};
{\ar@{=} "2";"12"};
{\ar_{\cong}@{-->}^{{}^{\exists}c} "4";"14"};
{\ar_{-\psi(e)} "14";"6"};
{\ar@{}|\circlearrowright "0";"12"};
{\ar@{}|\circlearrowright "2";"14"};
{\ar@{}|\circlearrowright "4";"15"};
\endxy
\]
Roughly speaking, this means that any $\Sigma$-epic morphism agrees with the \lq cokernel' of its \lq kernel'.
\item[{\rm (G2)}]  Dually, if $f\in\mathcal{C}(A,B)$ is $\Omega$-monic, then for any objects
\begin{eqnarray*}
A\overset{f}{\longrightarrow}B\overset{g}{\longrightarrow}C\overset{h}{\longrightarrow}\Sigma A&\in&\triangleright,\\
\Omega C\overset{e^{\prime}}{\longrightarrow}A^{\prime}\overset{f^{\prime}}{\longrightarrow}B\overset{g}{\longrightarrow}C&\in&\triangleleft,
\end{eqnarray*}
there exists an isomorphism $a\in\mathcal{C}(A,A^{\prime})$ such that
\[ f^{\prime}\circ a=f\ \ \text{and}\ \ -a\circ\psi^{-1}(h)=e^{\prime}. \]
\[
\xy
(-14,6)*+{A}="0";
(0,6)*+{B}="2";
(14,6)*+{C}="4";
(28,6)*+{\Sigma A}="6";
(-28,-6)*+{\Omega C}="-12";
(-14,-6)*+{A^{\prime}}="10";
(0,-6)*+{B}="12";
(14,-6)*+{C}="14";
(-24,2)*+{}="-1";
{\ar^{f} "0";"2"};
{\ar^{g} "2";"4"};
{\ar^{h} "4";"6"};
{\ar_{e^{\prime}} "-12";"10"};
{\ar_{f^{\prime}} "10";"12"};
{\ar_{g} "12";"14"};
{\ar_{{}^{\exists}a}^{\cong} "0";"10"};
{\ar@{=} "2";"12"};
{\ar@{=} "4";"14"};
{\ar^{-\psi^{-1}(h)} "-12";"0"};
{\ar@{}|\circlearrowright "0";"12"};
{\ar@{}|\circlearrowright "2";"14"};
{\ar@{}|\circlearrowright "-1";"10"};
\endxy
\]
\end{enumerate}
If we are given a pseudo-triangulation $(\Sigma,\Omega,\triangleright,\triangleleft,\psi)$ on $\mathcal{C}$, then we call the 6-tuple $(\mathcal{C},\Sigma,\Omega,\triangleright,\triangleleft,\psi)$ a {\it pseudo-triangulated category}.
We often represent a pseudo-triangulated category simply by $\mathcal{C}$.
\end{dfn}

\begin{ex}\label{ExTwoCases0}
Let $(\mathcal{C},\Sigma,\Omega,\triangleright,\triangleleft,\psi)$ be a pseudo-triangulated category. 
\begin{enumerate}
\item $\mathcal{C}$ is an abelian category if and only if $\Sigma=\Omega=0$.
\item $\mathcal{C}$ is a triangulated category if and only if $\Sigma$ is the quasi-inverse of $\Omega$ and $\psi$ is the one induced from the isomorphism $\Sigma\circ\Omega\cong\mathrm{Id}_{\mathcal{C}}$.
\end{enumerate}
\end{ex}
\begin{proof}
{\rm (1)} We only show that $\Sigma=\Omega=0$ implies the abelianess of $\mathcal{C}$. The converse is confirmed by a routine work.
Since $\Sigma=0$, Proposition \ref{PropExact} means $g=\mathrm{cok}(f)$ holds for any object
\[ A\overset{f}{\longrightarrow}B\overset{g}{\longrightarrow}C\overset{h}{\longrightarrow}\Sigma A \]
in $\triangleright$.

Thus {\rm (RTR1)} implies the existence of a cokernel for each morphism.
Dually for the existence of $\mathrm{ker}(f)$.
Moreover, in this case $f$ is $\Sigma$-null if and only if $f=0$, and $f$ is $\Sigma$-epic if and only if it is epimorphic.
Thus {\rm (G1)} means that any epimorphism $g$ agrees with $\mathrm{cok}(\mathrm{ker}(g))$. Dually for monomorphisms.

{\rm (2)} In this case, any morphism is at the same time $\Sigma$-null and $\Sigma$-epic, and $\Omega$-null and $\Omega$-monic. Moreover, $\triangleright$ and $\triangleleft$ agree. We only show $\triangleleft\subseteq\triangleright$.

By {\rm (LTR2)}, for any object
\begin{equation}
\Omega C\overset{e}{\longrightarrow}A\overset{f}{\longrightarrow}B\overset{g}{\longrightarrow}C
\label{TempEq}
\end{equation}
in $\triangleleft$, the shifted one
\[ \Omega B\overset{-\Omega g}{\longrightarrow}\Omega C\overset{e}{\longrightarrow}A\overset{f}{\longrightarrow}B \]
is also in $\triangleleft$. By {\rm (G1)}, we obtain an object in $\triangleright$
\[ \Omega C\overset{e}{\longrightarrow}A\overset{f}{\longrightarrow}B\overset{\psi(\Omega g)}{\longrightarrow}\Sigma\Omega C, \]
which is isomorphic to $(\ref{TempEq})$.
\end{proof}

\section{Extensions}
In this section, $\mathcal{C}$ is a pseudo-triangulated category with pseudo-triangulation $(\Sigma,\Omega,\triangleright,\triangleleft,\psi)$. We define the notion of an extension which generalizes a short exact sequence in an abelian category, and a distinguished triangle in a triangulated category.

\begin{dfn}\label{DefExtension}
A sequence in $\mathcal{C}$
\[ \Omega C\overset{e}{\longrightarrow}A\overset{f}{\longrightarrow} B\overset{g}{\longrightarrow}C\overset{h}{\longrightarrow}\Sigma A \]
is called an {\it extension} if it satisfies
\begin{eqnarray*}
&(A\overset{f}{\longrightarrow} B\overset{g}{\longrightarrow}C\overset{h}{\longrightarrow}\Sigma A)\in\triangleright,&\\
&(\Omega C\overset{e}{\longrightarrow}A\overset{f}{\longrightarrow} B\overset{g}{\longrightarrow}C)\in\triangleleft,&\\
&h=-\psi_{C,A}(e).&
\end{eqnarray*}
Since $e$ and $h$ determines each other, we sometimes omit one of them.

A {\it morphism of extensions} from
\[ \Omega C\overset{e}{\longrightarrow}A\overset{f}{\longrightarrow} B\overset{g}{\longrightarrow}C\overset{h}{\longrightarrow}\Sigma A \]
to
\[ \Omega C^{\prime}\overset{e^{\prime}}{\longrightarrow}A^{\prime}\overset{f^{\prime}}{\longrightarrow} B^{\prime}\overset{g^{\prime}}{\longrightarrow}C^{\prime}\overset{h^{\prime}}{\longrightarrow}\Sigma A^{\prime} \]
is a triplet $(a,b,c)$ of $a\in\mathcal{C}(A,A^{\prime})$, $b\in\mathcal{C}(B,B^{\prime})$ and $c\in\mathcal{C}(C,C^{\prime})$ satisfying
\[ b\circ f=f^{\prime}\circ a,\quad c\circ g=g^{\prime}\circ b,\quad (\Sigma a)\circ h=h^{\prime}\circ c. \]
Remark that $(\Sigma a)\circ h=h^{\prime}\circ c$ is equivalent to $a\circ e=e^{\prime}\circ(\Omega c)$. Thus, a morphism of extensions is essentially the same as a morphism in $\triangleright$ or $\triangleleft$.
\[
\xy
(-28,6)*+{\Omega C}="-2";
(-14,6)*+{A}="0";
(0,6)*+{B}="2";
(14,6)*+{C}="4";
(28,6)*+{\Sigma A}="6";
(-28,-6)*+{\Omega C^{\prime}}="-12";
(-14,-6)*+{A^{\prime}}="10";
(0,-6)*+{B^{\prime}}="12";
(14,-6)*+{C^{\prime}}="14";
(28,-6)*+{\Sigma A^{\prime}}="16";
{\ar^{e} "-2";"0"};
{\ar^{f} "0";"2"};
{\ar^{g} "2";"4"};
{\ar^{h} "4";"6"};
{\ar_{e^{\prime}} "-12";"10"};
{\ar_{f^{\prime}} "10";"12"};
{\ar_{g^{\prime}} "12";"14"};
{\ar_{h^{\prime}} "14";"16"};
{\ar_{\Omega c} "-2";"-12"};
{\ar^{a} "0";"10"};
{\ar^{b} "2";"12"};
{\ar^{c} "4";"14"};
{\ar^{\Sigma a} "6";"16"};
{\ar@{}|\circlearrowright "-2";"10"};
{\ar@{}|\circlearrowright "0";"12"};
{\ar@{}|\circlearrowright "2";"14"};
{\ar@{}|\circlearrowright "4";"16"};
\endxy
\]
\end{dfn}

\begin{rem}\label{RemExtension}
Consider a diagram in $\mathcal{C}$
\begin{equation}
\Omega C\overset{e}{\longrightarrow}A\overset{f}{\longrightarrow} B\overset{g}{\longrightarrow}C\overset{h}{\longrightarrow}\Sigma A
\label{ExtensionDiag}
\end{equation}
satisfying $h=-\psi(e)$. By {\rm (G1)} and {\rm (G2)} (and {\rm (RTR1)} and {\rm (LTR1)}), the following are equivalent.
\begin{enumerate}
\item $\Omega C\overset{e}{\longrightarrow}A\overset{f}{\longrightarrow} B\overset{g}{\longrightarrow}C$ belongs to $\triangleleft$ and $g$ is $\Sigma$-epic.
\item $A\overset{f}{\longrightarrow} B\overset{g}{\longrightarrow}C\overset{h}{\longrightarrow}\Sigma A$ belongs to $\triangleright$ and $f$ is $\Omega$-monic.
\item $(\ref{ExtensionDiag})$ is an extension.
\end{enumerate}
\end{rem}

\begin{cor}\label{CorExtension}
$\ \ $
\begin{enumerate}
\item $g\in\mathcal{C}(B,C)$ is $\Sigma$-epic if and only if there exists an extension $(\ref{ExtensionDiag})$, if and only if there exists an object $A\rightarrow B\overset{g}{\longrightarrow}C\rightarrow\Sigma A$ in $\triangleright$.
\item $f\in\mathcal{C}(A,B)$ is $\Omega$-monic if and only if there exists an extension $(\ref{ExtensionDiag})$, if and only if there exists an object $\Omega C\rightarrow A\overset{f}{\longrightarrow}B\rightarrow C$ in $\triangleleft$.
\end{enumerate}
\end{cor}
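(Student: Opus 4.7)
The plan is to prove the chain of implications $\mathrm{(b)} \Rightarrow \mathrm{(c)} \Rightarrow \mathrm{(a)} \Rightarrow \mathrm{(b)}$ in part (1), where I label the three conditions (a) $g$ is $\Sigma$-epic, (b) there exists an extension through $g$, and (c) there exists an object of $\triangleright$ with $g$ as middle morphism. Part (2) follows by the standard duality between right and left triangulated structures.

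For $\mathrm{(b)} \Rightarrow \mathrm{(c)}$, Definition \ref{DefExtension} directly supplies the right triangle. For $\mathrm{(c)} \Rightarrow \mathrm{(a)}$, given $A \overset{f}{\to} B \overset{g}{\to} C \overset{h}{\to} \Sigma A \in \triangleright$, I would shift via (RTR2) to obtain $B \overset{g}{\to} C \overset{h}{\to} \Sigma A \overset{-\Sigma f}{\to} \Sigma B \in \triangleright$. Since $h \colon C \to \Sigma A$ tautologically factors through $\Sigma A \in \Sigma\mathcal{C}$ via the identity, $h$ is $\Sigma$-null; by the Remark characterizing $\Sigma$-epic morphisms (just before Definition \ref{DefPreTr}) applied to the shifted triangle, this forces $g$ to be $\Sigma$-epic.

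The substantive implication is $\mathrm{(a)} \Rightarrow \mathrm{(b)}$. Starting from a $\Sigma$-epic morphism $g \colon B \to C$, I would first use (LTR1) to extend $g$ to a left triangle $\Omega C \overset{e}{\to} A \overset{f}{\to} B \overset{g}{\to} C \in \triangleleft$, and then use (RTR1) to extend the \emph{same} $f$ to a right triangle $A \overset{f}{\to} B \overset{g_0}{\to} C_0 \overset{h_0}{\to} \Sigma A \in \triangleright$. The gluing condition (G1), applicable because $g$ is $\Sigma$-epic, then delivers an isomorphism $c \colon C_0 \to C$ with $c \circ g_0 = g$ and $h_0 = -\psi(e) \circ c$. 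Repleteness of $\triangleright \subseteq \mathcal{RT}(\mathcal{C},\Sigma)$ transports the right triangle along the isomorphism $(\mathrm{id}_A, \mathrm{id}_B, c)$ to produce $A \overset{f}{\to} B \overset{g}{\to} C \overset{-\psi(e)}{\to} \Sigma A \in \triangleright$. Setting $h := -\psi(e)$, this right triangle together with the original left triangle constitutes an extension in the sense of Definition \ref{DefExtension}.

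The main obstacle is the ordering in the application of (G1): one must pick the left triangle first, so that $f$ is determined, and \emph{then} extend that particular $f$ to a right triangle—not start with an arbitrary right triangle through $g$. Once that is arranged, the remainder is bookkeeping: verifying that $(\mathrm{id}_A, \mathrm{id}_B, c)$ is genuinely a morphism in $\mathcal{RT}(\mathcal{C},\Sigma)$ (which reduces to the two identities supplied by (G1)), and then invoking the repleteness of $\triangleright$.
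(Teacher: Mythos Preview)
Your proof is correct and follows essentially the same route as the paper. The only difference in packaging is that for $\mathrm{(a)}\Rightarrow\mathrm{(b)}$ the paper simply cites Remark~\ref{RemExtension} (applying {\rm(LTR1)} and then the equivalence $(1)\Leftrightarrow(3)$ there), whereas you spell out the underlying application of {\rm(G1)} directly; since Remark~\ref{RemExtension} is itself a consequence of {\rm(G1)}/{\rm(G2)} together with {\rm(RTR1)}/{\rm(LTR1)}, your argument is exactly the unpacked form of the paper's.
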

\begin{proof}
We show only {\rm (1)}. If there exists an object $A\rightarrow B\overset{g}{\longrightarrow}C\rightarrow\Sigma A$ in $\triangleright$, then by {\rm (RTR2)}, we have an object in $\triangleright$
\[ B\overset{g}{\longrightarrow}C\rightarrow\Sigma A\rightarrow \Sigma B. \]
Obviously this implies $g$ is $\Sigma$-epic.

Conversely if $g$ is $\Sigma$-epic, then by {\rm (LTR1)} and Remark \ref{RemExtension}, we obtain an extension $(\ref{ExtensionDiag})$.
\end{proof}

\begin{lem}\label{LemSPL}
Let $f\in\mathcal{C}(A,B)$, $m\in\mathcal{C}(A,M)$ and $e\in\mathcal{C}(M,B)$ be morphisms satisfying $e\circ m=f$.
\begin{enumerate}
\item If $f$ is $\Sigma$-epic, then so is $e$.
\item If $f$ is $\Omega$-monic, then so is $m$.
\end{enumerate}
\end{lem}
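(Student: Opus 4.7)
The plan is to apply the definitions of $\Sigma$-epic and $\Omega$-monic directly, using only the factorization $f = e\circ m$ and associativity of composition. There is essentially no machinery beyond Definition \ref{DefMorphProperty} (and its dual) needed, so I expect the argument to be a few lines per part.

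For part (1), I want to show that $e\in\mathcal{C}(M,B)$ is $\Sigma$-epic assuming $f = e\circ m$ is $\Sigma$-epic. So I take an arbitrary $b\in\mathcal{C}(B,B')$ with $b\circ e = 0$ and must show $b$ is $\Sigma$-null. The key observation is that precomposing $b\circ e = 0$ with $m$ gives
\[ b\circ f \;=\; b\circ e\circ m \;=\; 0, \]
so the hypothesis that $f$ is $\Sigma$-epic kicks in and produces the desired conclusion that $b$ factors through some object of $\Sigma\mathcal{C}$.

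Part (2) is formally dual: given any $a\in\mathcal{C}(A',A)$ with $m\circ a = 0$, we get $f\circ a = e\circ m\circ a = 0$, and because $f$ is $\Omega$-monic by assumption, $a$ must be $\Omega$-null. Hence $m$ is $\Omega$-monic.

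The only potential pitfall is a notational one — making sure one composes on the correct side when invoking the definition — but there is no genuine obstacle here, since neither the triangulation axioms nor the gluing conditions (G1), (G2) are required, and no extension needs to be constructed. The lemma is really just the formal statement that the defining cancellation property of $\Sigma$-epic (resp.\ $\Omega$-monic) morphisms is inherited by left factors (resp.\ right factors) of the morphism.
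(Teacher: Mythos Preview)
Your proof is correct, and in fact more elementary than the paper's. You use only Definition~\ref{DefMorphProperty} directly: if $b\circ e=0$ then $b\circ f=b\circ e\circ m=0$, so $\Sigma$-epicness of $f$ forces $b$ to be $\Sigma$-null; dually for part~(2). The paper instead invokes {\rm (RTR1)} and {\rm (RTR3)} to embed $f$ and $e$ into right triangles $A\overset{f}{\to}B\overset{g}{\to}C\to\Sigma A$ and $M\overset{e}{\to}B\overset{g'}{\to}D\to\Sigma M$, completes $(m,\mathrm{id}_B)$ to a morphism of triangles, and then uses the characterisation in the Remark following Definition~\ref{DefMorphProperty}: since $f$ is $\Sigma$-epic, $g$ is $\Sigma$-null, hence $g'$ (which factors through $g$) is $\Sigma$-null, so $e$ is $\Sigma$-epic. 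Your argument bypasses the triangulation axioms and the equivalence of characterisations entirely; the paper's route is slightly heavier but has the minor advantage of exhibiting the triangle on $e$ explicitly, which is sometimes what one wants downstream.
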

\begin{proof}
{\rm (1)} By {\rm (RTR1)} and {\rm (RTR3)}, there exists a morphism in $\triangleright$
\[
\xy
(-14,6)*+{A}="0";
(0,6)*+{B}="2";
(14,6)*+{C}="4";
(28,6)*+{\Sigma A}="6";
(-14,-6)*+{M}="10";
(0,-6)*+{B}="12";
(14,-6)*+{D}="14";
(28,-6)*+{\Sigma M.}="16";
{\ar^{f} "0";"2"};
{\ar^{g} "2";"4"};
{\ar^{h} "4";"6"};
{\ar_{e} "10";"12"};
{\ar_{g^{\prime}} "12";"14"};
{\ar_{h^{\prime}} "14";"16"};
{\ar_{m} "0";"10"};
{\ar@{=} "2";"12"};
{\ar^{} "4";"14"};
{\ar^{\Sigma m} "6";"16"};
{\ar@{}|\circlearrowright "0";"12"};
{\ar@{}|\circlearrowright "2";"14"};
{\ar@{}|\circlearrowright "4";"16"};
\endxy
\]
Then since $g$ is $\Sigma$-null, so is $g^{\prime}$.
{\rm (2)} is shown dually.
\end{proof}

\begin{ex}\label{ExTwoCases}
The notion of an extension becomes as follows in the two cases of Example \ref{ExTwoCases0}.
\begin{enumerate}
\item If $\Sigma=\Omega=0$ and $\mathcal{C}$ is abelian, then an extension is nothing other than a short exact sequence.
\item If $\mathcal{C}$ is a triangulated category as in Example \ref{ExTwoCases0}, then an extension is nothing other than a distinguished triangle.
\end{enumerate}
\end{ex}

\begin{prop}\label{PropSPL}
For any $A,B\in\mathcal{C}$,
\[ \Omega B\overset{0}{\longrightarrow}A\overset{i_A}{\longrightarrow}A\oplus B\overset{p_B}{\longrightarrow}B\overset{0}{\longrightarrow}\Sigma A \]
is an extension, where $i_A$ and $p_B$ are the injection and the projection, respectively.
\end{prop}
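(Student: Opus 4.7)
By Definition \ref{DefExtension}, the verification breaks into three claims: the right portion lies in $\triangleright$, the left portion lies in $\triangleleft$, and the compatibility $0 = -\psi_{B,A}(0)$ holds. The compatibility is immediate since $\psi$ is additive.

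The plan for the right half is to realize the desired triangle as isomorphic to one produced by {\rm (RTR1)}. First, applying {\rm (RTR1)} to $\mathrm{id}_A$ and then shifting by {\rm (RTR2)} gives that $A \overset{\mathrm{id}_A}{\longrightarrow} A \longrightarrow 0 \overset{0}{\longrightarrow} \Sigma A$ lies in $\triangleright$. Next, {\rm (RTR1)} applied to $i_A$ produces some $A \overset{i_A}{\longrightarrow} A\oplus B \overset{g}{\longrightarrow} C \overset{h}{\longrightarrow} \Sigma A$ in $\triangleright$. Since $p_A \circ i_A = \mathrm{id}_A$, {\rm (RTR3)} extends the pair $(\mathrm{id}_A, p_A)$ to a morphism from this object to the first one; the commutativity of the rightmost square then reads $\Sigma\mathrm{id}_A \circ h = 0$, forcing $h = 0$.

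I next identify $C$ with $B$ via the long exact sequence of Proposition \ref{PropExact}. Applying $\mathcal{C}(-, B)$ and using $p_B \circ i_A = 0$ together with $h = 0$, there exists $\pi \in \mathcal{C}(C, B)$ with $\pi \circ g = p_B$; then $\pi \circ (g \circ i_B) = p_B \circ i_B = \mathrm{id}_B$. Conversely, the decomposition $i_A \circ p_A + i_B \circ p_B = \mathrm{id}_{A\oplus B}$ combined with $g \circ i_A = 0$ gives $g = g \circ i_B \circ p_B$, whence $(\mathrm{id}_C - g\circ i_B \circ \pi) \circ g = 0$. Applying $\mathcal{C}(-, C)$ and using $h = 0$ once more, the kernel of $g^*\colon \mathcal{C}(C,C)\to\mathcal{C}(A\oplus B,C)$ vanishes, so $g\circ i_B \circ \pi = \mathrm{id}_C$. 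Thus $\pi$ is an isomorphism, and $(\mathrm{id}_A, \mathrm{id}_{A\oplus B}, \pi, \mathrm{id}_{\Sigma A})$ is an isomorphism in $\mathcal{RT}(\mathcal{C},\Sigma)$ onto $A \overset{i_A}{\longrightarrow} A\oplus B \overset{p_B}{\longrightarrow} B \overset{0}{\longrightarrow} \Sigma A$; repleteness of $\triangleright$ closes the right half.

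The left half is proved dually using {\rm (LTR1)--(LTR3)} and the dual of Proposition \ref{PropExact}. The main point to keep in mind is that $\Sigma$ and $\Omega$ are not assumed to be equivalences, so one cannot pass from one half to the other by a rotation argument; the two halves must be assembled independently. Once both are in place, the trivial compatibility $0=-\psi(0)$ completes the verification.
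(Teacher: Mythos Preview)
Your argument is correct. However, it takes a genuinely different route from the paper's. The paper first observes that $p_B$ is $\Sigma$-epic (via Lemma~\ref{LemSPL}, since $\mathrm{id}_B=p_B\circ i_B$), and then invokes Corollary~\ref{CorExtension} to obtain an \emph{extension} $\Omega B\overset{u}{\to}C\overset{v}{\to}A\oplus B\overset{p_B}{\to}B\overset{w}{\to}\Sigma C$ in one stroke; it then identifies $C$ with $A$ by showing that the canonical lift $r\colon A\to C$ of $i_A$ is an isomorphism. Thus the paper does a single computation and lets the gluing conditions {\rm (G1)}, {\rm (G2)} (packaged in Corollary~\ref{CorExtension}) handle the coexistence of the right and left triangles.

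You instead establish membership in $\triangleright$ and $\triangleleft$ independently: on the right you embed $i_A$ in a right triangle, force $h=0$ via {\rm (RTR3)}, and identify the cone with $B$ using only Proposition~\ref{PropExact}; the left side is dual. Your approach never touches the gluing conditions, so it in fact proves the stronger statement that the split sequence lies in $\triangleright$ (resp.\ $\triangleleft$) for any right (resp.\ left) triangulation, independent of the pseudo-triangulated structure. The price is that you must run the argument twice; the paper's use of Corollary~\ref{CorExtension} trades that duplication for a dependence on the gluing axioms already in place.
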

\begin{proof}
Let $p_A\colon A\oplus B\rightarrow A$ be the projection, and $i_B\colon B\rightarrow A\oplus B$ be the inclusion. Since $\mathrm{id}_B$ is $\Sigma$-epic by {\rm (RTR1)}, so is $p_B$ by Lemma \ref{LemSPL}. Thus by Corollary \ref{CorExtension}, there is an extension
\[ \Omega B\overset{u}{\longrightarrow}{}^{\exists}C\overset{v}{\longrightarrow}A\oplus B\overset{p_B}{\longrightarrow}B\overset{w}{\longrightarrow}\Sigma C \]
with some morphisms $u,v,w$.
Since $p_B$ is the projection and $w\circ p_B=0$ by Proposition \ref{PropExact}, we have $w=0$, and thus $u=0$. By $p_B\circ i_A=0$, there exists $r\in\mathcal{C}(A,C)$ such that $v\circ r=i_A$.
\[
\xy
(-22,-6)*+{\Omega B}="0";
(-8,-6)*+{C}="2";
(6,-6)*+{A\oplus B}="4";
(22,-6)*+{B}="6";
(6,6)*+{A}="14";
(-2,2)*+{}="15";
{\ar_{u=0} "0";"2"};
{\ar_{v} "2";"4"};
{\ar_{p_B} "4";"6"};
{\ar^{i_A} "14";"4"};
{\ar@{-->}_{{}^{\exists}r} "14";"2"};
{\ar@{}|\circlearrowright "15";"4"};
\endxy
\]
Then we have
\begin{eqnarray*}
v\circ(\mathrm{id}_C-r\circ(p_A\circ v))&=&v-v\circ r\circ p_A\circ v\\
&=&(\mathrm{id_C}-i_A\circ p_A)\circ v\\
&=&(i_B\circ p_B)\circ v=0.
\end{eqnarray*}
Thus $\mathrm{id}_C-r\circ p_A\circ v$ factors through $u=0$, which means
\[ r\circ(p_A\circ v)=\mathrm{id}_C. \]
Since $(p_A\circ v)\circ r=p_A\circ i_A=\mathrm{id}_A$, this means $r$ is an isomorphism.
\end{proof}

\begin{prop}\label{PropOCT}
Let
\begin{eqnarray*}
\Omega C\overset{e}{\longrightarrow}A\overset{f}{\longrightarrow}B\overset{g}{\longrightarrow}C\overset{h}{\longrightarrow}\Sigma A,\\
\Omega B^{\prime}\overset{k}{\longrightarrow}A\overset{\ell}{\longrightarrow}M\overset{m}{\longrightarrow}B^{\prime}\overset{n}{\longrightarrow}\Sigma A,\\
\Omega B\overset{k^{\prime}}{\longrightarrow}A^{\prime}\overset{\ell^{\prime}}{\longrightarrow}M\overset{m^{\prime}}{\longrightarrow}B\overset{n^{\prime}}{\longrightarrow}\Sigma A^{\prime},
\end{eqnarray*}
be extensions, satisfying $m^{\prime}\circ \ell=f$.
Then there exist $g^{\prime}\in\mathcal{C}(B^{\prime},C)$ and $h^{\prime}\in\mathcal{C}(C,\Sigma A^{\prime})$ such that
\begin{eqnarray*}
h^{\prime}\circ g=n^{\prime}&,&h\circ g^{\prime}=n,\\
g^{\prime}\circ m=g\circ m^{\prime}&,&(\Sigma\ell)\circ h+(\Sigma\ell^{\prime})\circ h^{\prime}=0,
\end{eqnarray*}
and
\[ \Omega C\rightarrow A^{\prime}\overset{f^{\prime}}{\longrightarrow}B^{\prime}\overset{g^{\prime}}{\longrightarrow}C\overset{h^{\prime}}{\longrightarrow}\Sigma A^{\prime} \]
is an extension. Here we put $f^{\prime}=m\circ\ell^{\prime}$.
Remark if we put $e^{\prime}=-\psi^{-1}(h^{\prime})$, then $(\Sigma\ell)\circ h+(\Sigma\ell^{\prime})\circ h^{\prime}=0$ is equivalent to $\ell^{\prime}\circ e^{\prime}+\ell\circ e=0$.
\[
\xy
(-32,0)*+{\Omega B^{\prime}}="0";
(-16,0)*+{A}="2";
(0,0)*+{M}="4";
(16,0)*+{B^{\prime}}="6";
(32,0)*+{\Sigma A}="8";
(0,24)*+{\Sigma A^{\prime}}="10";
(32,24)*+{\Sigma M}="30";
(0,12)*+{B}="12";
(10,20)*+{}="13";
(16,12)*+{C}="14";
(13,9)*+{}="15";
(-16,-12)*+{\Omega C}="16";
(0,-12)*+{A^{\prime}}="18";
(0,-24)*+{\Omega B}="20";
(-10,8)*+{}="22";
(10,-8)*+{}="24";
(26,8)*+{}="26";
{\ar^{k} "0";"2"};
{\ar_{\ell} "2";"4"};
{\ar_{m} "4";"6"};
{\ar_{n} "6";"8"};
{\ar^{e} "16";"2"};
{\ar^{f} "2";"12"};
{\ar_{g} "12";"14"};
{\ar^{h} "14";"8"};
{\ar^{k^{\prime}} "20";"18"};
{\ar^{\ell^{\prime}} "18";"4"};
{\ar_{m^{\prime}} "4";"12"};
{\ar^{n^{\prime}} "12";"10"};
{\ar@{-->}^{g^{\prime}} "6";"14"};
{\ar@{-->}_{e^{\prime}} "16";"18"};
{\ar@{-->}_{f^{\prime}} "18";"6"};
{\ar@{-->}_{h^{\prime}} "14";"10"};
{\ar^{-\Sigma\ell} "8";"30"};
{\ar_{\Sigma\ell^{\prime}} "10";"30"};
{\ar@{}|\circlearrowright "6";"12"};
{\ar@{}|\circlearrowright "4";"22"};
{\ar@{}|\circlearrowright "4";"24"};
{\ar@{}|\circlearrowright "6";"26"};
{\ar@{}|\circlearrowright "12";"13"};
{\ar@{}|\circlearrowright "15";"30"};
\endxy
\]
Dual statement also holds.
\end{prop}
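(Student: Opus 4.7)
The plan is to apply (RTR4) to the right-triangulated halves of the three extensions to produce $g'$ and $h'$, apply (LTR4) to the left-triangulated halves to verify that $f'=m\circ\ell'$ is $\Omega$-monic, and then glue the two halves into a single extension via Remark \ref{RemExtension}.

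First I would restrict the three extensions to their right-triangulated parts
\[A\overset{f}{\to}B\overset{g}{\to}C\overset{h}{\to}\Sigma A,\ \
A\overset{\ell}{\to}M\overset{m}{\to}B'\overset{n}{\to}\Sigma A,\ \
A'\overset{\ell'}{\to}M\overset{m'}{\to}B\overset{n'}{\to}\Sigma A',\]
which lie in $\triangleright$. The hypothesis $m'\circ\ell=f$ is exactly the factorization required by (RTR4), so applying (RTR4) produces $g'\in\mathcal{C}(B',C)$ and $h'\in\mathcal{C}(C,\Sigma A')$ satisfying the four compatibility relations stated in the proposition, and places $A'\overset{f'}{\to}B'\overset{g'}{\to}C\overset{h'}{\to}\Sigma A'$ in $\triangleright$, with $f'=m\circ\ell'$.

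Next I would apply (LTR4), the dual of (RTR4), to the three left-triangulated parts $\Omega C\overset{e}{\to}A\to B\to C$, $\Omega B'\overset{k}{\to}A\to M\to B'$, and $\Omega B\overset{k'}{\to}A'\to M\to B$, which share the same vertex configuration and whose middle maps satisfy the same factorization condition $m'\circ\ell=f$. This produces a left-triangulation $\Omega C\to A'\overset{f'}{\to}B'\to C$ in $\triangleleft$ with middle map $f'=m\circ\ell'$, whence $f'$ is $\Omega$-monic by Corollary \ref{CorExtension}(2).

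Finally I would glue: the right-triangulation lies in $\triangleright$ and $f'$ is $\Omega$-monic, so setting $e'=-\psi^{-1}(h')$ (which forces $h'=-\psi(e')$) and invoking the implication $(2)\Rightarrow(3)$ of Remark \ref{RemExtension}, the assembled sequence $\Omega C\overset{e'}{\to}A'\overset{f'}{\to}B'\overset{g'}{\to}C\overset{h'}{\to}\Sigma A'$ is an extension. The equivalence $(\Sigma\ell)\circ h+(\Sigma\ell')\circ h'=0\Leftrightarrow\ell'\circ e'+\ell\circ e=0$ then reduces to the identity $(\Sigma\ell)\circ(-\psi(e))=-\psi(\ell\circ e)$ provided by naturality of $\psi$. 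I expect the main obstacle to be confirming that the dual of (RTR4)'s compatibility condition really reads as the same factorization $m'\circ\ell=f$ in the left-triangulated setting, so that (LTR4) applies to our data without modification; the dual statement of the proposition then follows by running the same argument in $\mathcal{C}^{\mathrm{op}}$, which carries a pseudo-triangulation with the roles of $\Sigma$ and $\Omega$ interchanged.
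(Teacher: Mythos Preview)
Your overall strategy matches the paper's exactly: apply (RTR4) to produce $g',h'$ and the right triangle, then use (LTR4) to show $f'$ is $\Omega$-monic, then glue via Remark~\ref{RemExtension}. The obstacle you flagged is real, however, and it is not resolved by your description.

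The dual condition in (LTR4) is a factorization of the \emph{last} arrow of a left triangle, not the middle one. Concretely, (LTR4) takes left triangles
\[
\Omega P\to R\to Q\overset{\varphi}{\to}P,\qquad
\Omega P\to Q'\to N\overset{\lambda}{\to}P,\qquad
\Omega P'\to Q\overset{\mu'}{\to}N\to P'
\]
with $\lambda\circ\mu'=\varphi$, and outputs $\Omega P'\to R\to Q'\overset{\varphi'}{\to}P'$. In your three unshifted left halves $\Omega C\to A\to B\to C$, $\Omega B'\to A\to M\to B'$, $\Omega B\to A'\to M\to B$, no two share their terminal object, so they do not fit the template; the relation $m'\circ\ell=f$ sits in the wrong slot. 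The paper fixes this by first rotating the first extension via (LTR2) to obtain $\Omega B\overset{-\Omega g}{\to}\Omega C\overset{e}{\to}A\overset{f}{\to}B$, so that $f$ is now the last arrow. With $P=B$, $Q=A$, $R=\Omega C$, $N=M$, $Q'=A'$, $P'=B'$, the condition $\lambda\circ\mu'=\varphi$ becomes precisely $m'\circ\ell=f$, and the output is $\Omega B'\to\Omega C\to A'\overset{f'}{\to}B'$ with $f'=m\circ\ell'$. This is \emph{not} the left triangle $\Omega C\to A'\to B'\to C$ you claimed, but it suffices: it exhibits $f'$ as a map in an object of $\triangleleft$, hence $\Omega$-monic by Corollary~\ref{CorExtension}(2), and the rest of your gluing argument goes through unchanged.
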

\begin{proof}
By {\rm (RTR4)}, there exist $g^{\prime}\in\mathcal{C}(B^{\prime},C)$ and $h^{\prime}\in\mathcal{C}(C,\Sigma A^{\prime})$ such that
\begin{eqnarray*}
h^{\prime}\circ g=n^{\prime}&,&h\circ g^{\prime}=n,\\
g^{\prime}\circ m=g\circ m^{\prime}&,&(\Sigma\ell)\circ h+(\Sigma\ell^{\prime})\circ h^{\prime}=0,
\end{eqnarray*}
and
\[ A^{\prime}\overset{f^{\prime}}{\longrightarrow}B^{\prime}\overset{g^{\prime}}{\longrightarrow}C\overset{h^{\prime}}{\longrightarrow}\Sigma A^{\prime} \]
is an object in $\triangleright$.
Thus by Remark \ref{RemExtension}, it suffices to show $f^{\prime}$ is $\Omega$-monic. This follows from {\rm (LTR4)}. In fact, applying {\rm (LTR4)} to objects in $\triangleleft$
\begin{eqnarray*}
\Omega B\overset{-\Omega g}{\longrightarrow}\Omega C\overset{e}{\longrightarrow}A\overset{f}{\longrightarrow}B,\\
\Omega B^{\prime}\overset{k}{\longrightarrow}A\overset{\ell}{\longrightarrow}M\overset{m}{\longrightarrow}B^{\prime},\\
\Omega B\overset{k^{\prime}}{\longrightarrow}A^{\prime}\overset{\ell^{\prime}}{\longrightarrow}M\overset{m^{\prime}}{\longrightarrow}B,
\end{eqnarray*}
we obtain an object in $\triangleleft$
\[ \Omega B^{\prime}\rightarrow\Omega C\rightarrow A^{\prime}\overset{f^{\prime}}{\longrightarrow}B^{\prime}, \]
which means $f^{\prime}$ is $\Omega$-monic.
\end{proof}

\section{Frobenius condition}
\label{SecFrobCond}
In this section, we define an extension-closed subcategory $\mathcal{Z}$ of $\mathcal{C}$, and the Frobenius condition on it. This condition generalizes simultaneously the usual Frobenius condition for an exact category, and the the existence of a subcategory $\mathcal{D}$ such that $(\mathcal{Z},\mathcal{Z})$ is a $\mathcal{D}$-mutation pair in the case of a triangulated category.

\begin{dfn}
A subcategory $\mathcal{Z}\subseteq\mathcal{C}$ is said to be {\it extension-closed} if it satisfies the following.
\begin{itemize}
\item[$(\ast)$] For any extension in $\mathcal{C}$
\[ \Omega Z\overset{e}{\longrightarrow}X\overset{f}{\longrightarrow}Y\overset{g}{\longrightarrow}Z\overset{h}{\longrightarrow}\Sigma X, \]
$X,Z\in\mathcal{Z}$ implies $Y\in\mathcal{Z}$.
\end{itemize}
\end{dfn}

In the following, we fix an extension-closed subcategory $\mathcal{Z}\subseteq\mathcal{C}$.

\begin{rem}
When $\mathcal{C}$ is an abelian category as in Example \ref{ExTwoCases}, then $\mathcal{Z}$ is an exact category.
\end{rem}

\begin{dfn}Let $\mathcal{Z}\subseteq\mathcal{C}$ be an extension-closed subcategory as above.
\begin{enumerate}
\item A {\it conflation} is an extension in $\mathcal{C}$
\begin{equation}
\Omega Z\overset{e}{\longrightarrow}X\overset{f}{\longrightarrow}Y\overset{g}{\longrightarrow}Z\overset{h}{\longrightarrow}\Sigma X,
\label{conflation}
\end{equation}
satisfying $X,Y,Z\in\mathcal{Z}$. A {\it morphism of conflations} is a morphism of the extensions.
\item A morphism $f\colon X\rightarrow Y$ in $\mathcal{Z}$ is an {\it inflation} if there exists a conflation $(\ref{conflation})$.
\item A morphism $g\colon Y\rightarrow Z$ in $\mathcal{Z}$ is a {\it deflation} if there exists a conflation $(\ref{conflation})$.
\end{enumerate}
\end{dfn}

In the following, we fix an extension-closed subcategory $\mathcal{Z}\subseteq\mathcal{C}$. For a full additive replete subcategory $\mathcal{D}\subseteq\mathcal{Z}$, we consider the following condition {\rm (DS)}.
\begin{condition}
$\ \ $
\begin{itemize}
\item[(DS)]
$\mathcal{D}$ is {\it closed under finite direct summands in} $\mathcal{Z}$, namely, for any $Z_1,Z_2\in\mathcal{Z}$ and $D\in\mathcal{D}$, $D\cong Z_1\oplus Z_2$ implies $Z_1,Z_2\in\mathcal{Z}$.
\end{itemize}
\end{condition}

\begin{dfn}
Let $\mathcal{D}\subseteq\mathcal{Z}$ be a full additive replete subcategory satisfying {\rm (DS)}.
\begin{enumerate}
\item An object $I$ in $\mathcal{D}$ is {\it injective} if
\[ \mathcal{Z}(Y,I)\overset{-\circ f}{\longrightarrow}\mathcal{Z}(X,I)\rightarrow 0 \]
is exact for any inflation $f\colon X\rightarrow Y$. We denote the full subcategory of injective objects by $\mathcal{I}_{\mathcal{D}}\subseteq\mathcal{D}$. In particular $\mathcal{I}_{\mathcal{Z}}$ is denoted by $\mathcal{I}$.
\item An object $P$ in $\mathcal{D}$ is {\it projective} if
\[ \mathcal{Z}(P,Y)\overset{g\circ-}{\longrightarrow}\mathcal{Z}(P,Z)\rightarrow 0 \]
is exact for any deflation $g\colon Y\rightarrow Z$. We denote the full subcategory of projective objects by $\mathcal{P}_{\mathcal{D}}\subseteq\mathcal{D}$. In particular $\mathcal{P}_{\mathcal{Z}}$ is denoted by $\mathcal{P}$.
\end{enumerate}
\end{dfn}

\begin{ex}\label{ExExtSub}$\ \ $
\begin{enumerate}
\item If $\mathcal{Z}\subseteq\mathcal{C}$ is an exact category where $\mathcal{C}$ is an abelian category as in Example \ref{ExTwoCases}, then $\mathcal{I}$ is equal to the full subcategory of injective objects, and $\mathcal{P}$ is equal to the full subcategory of projective objects.

\item If $\mathcal{C}$ is a triangulated category, and if $\mathcal{D}$ satisfies $\mathcal{C}(\Omega\mathcal{Z},\mathcal{D})=\mathcal{C}(\mathcal{D},\Sigma\mathcal{Z})=0$, then we have $\mathcal{I}_{\mathcal{D}}=\mathcal{P}_{\mathcal{D}}=\mathcal{D}$.
\end{enumerate}
\end{ex}

\begin{caution}
The definitions of injective and projective objects are different from those in \cite{B}.
\end{caution}

\begin{rem}\label{RemIPThick}
$\ \ $
\begin{enumerate}
\item $\mathcal{I}_{\mathcal{D}}$ and $\mathcal{P}_{\mathcal{D}}$ are full additive replete subcategories, which are closed under finite direct summands in $\mathcal{Z}$.
\item $\mathcal{I}_{\mathcal{D}}=\mathcal{I}\cap\mathcal{D}$.
\item $\mathcal{P}_{\mathcal{D}}=\mathcal{P}\cap\mathcal{D}$.
\end{enumerate}
\end{rem}
\begin{proof}
Left to the reader.
\end{proof}

\begin{dfn}\label{DefFrobCond}
Let $(\mathcal{C},\mathcal{Z},\mathcal{D})$ be a triplet as above.
\begin{enumerate}
\item $(\mathcal{C},\mathcal{Z},\mathcal{D})$ {\it has enough injectives} if for any $X\in\mathcal{Z}$, there exists an inflation $\alpha\colon X\rightarrow I$ such that $I\in\mathcal{I}_{\mathcal{D}}$. When $\mathcal{D}=\mathcal{Z}$, we simply say \lq\lq$\mathcal{Z}$ has enough injectives".
\item $(\mathcal{C},\mathcal{Z},\mathcal{D})$ {\it has enough projectives} if for any $Z\in\mathcal{Z}$, there exists a deflation $\beta\colon P\rightarrow Z$ such that $P\in\mathcal{P}_{\mathcal{D}}$. When $\mathcal{D}=\mathcal{Z}$, we simply say \lq\lq$\mathcal{Z}$ has enough projectives".
\item $(\mathcal{C},\mathcal{Z},\mathcal{D})$ is {\it Frobenius} if it has enough injectives and projectives, and moreover $\mathcal{I}_{\mathcal{D}}=\mathcal{P}_{\mathcal{D}}$. 
When $\mathcal{D}=\mathcal{Z}$, we simply say \lq\lq$\mathcal{Z}$ is Frobenius".
\end{enumerate}
\end{dfn}

\begin{ex}\label{ExFrobCond}
$\ \ $
\begin{enumerate}
\item If $\mathcal{Z}\subseteq\mathcal{C}$ is an exact category as in Example \ref{ExExtSub}, then $\mathcal{Z}$ is Frobenius if and only if $\mathcal{Z}$ is Frobenius as an exact category. In this case the stable category $\mathcal{Z}/\mathcal{I}$ is triangulated \cite{H}.
\item If $\mathcal{C}$ is a triangulated category and if $(\mathcal{Z},\mathcal{Z})$ is a $\mathcal{D}$-mutation pair in $\mathcal{C}$ $($in the definition in \cite{IY}$)$, then $(\mathcal{C},\mathcal{Z},\mathcal{D})$ is Frobenius. In this case $\mathcal{Z}/\mathcal{I}_{\mathcal{D}}=\mathcal{Z}/\mathcal{D}$ becomes a triangulated category by Theorem 4.2 in \cite{IY}.
\end{enumerate}
\end{ex}

\begin{center}
\begin{tabular}
[c]{|c|c|c|}\hline
& Happel's construction \cite{H} & Iyama and Yoshino's construction \cite{IY} \\\hline
$\mathcal{C}$ & abelian category & triangulated category \\\hline
$\mathcal{Z}$ & exact subcategory & extension-closed subcategory \\\hline
$\mathcal{D}$ & $\mathcal{Z}=\mathcal{D}$ & $(\mathcal{Z},\mathcal{Z})$ : $\mathcal{D}$-mutation pair \\\hline
$\mathcal{I}_{\mathcal{D}}$ & injective objects & $\mathcal{I}_{\mathcal{D}}$=$\mathcal{D}$ \\\hline
$\mathcal{P}_{\mathcal{D}}$ & projective objects & $\mathcal{P}_{\mathcal{D}}$=$\mathcal{D}$ \\\hline
\end{tabular}
\end{center}
In section \ref{SecMainThm}, in a pseudo-triangulated category $\mathcal{C}$ satisfying Condition \ref{CondPTR}, we show $\mathcal{Z}/\mathcal{I}_{\mathcal{D}}$ becomes a triangulated category for any Frobenius triplet $(\mathcal{C},\mathcal{Z},\mathcal{D})$ (Theorem \ref{MainThm}), which we call the {\it stable category} associated to $(\mathcal{C},\mathcal{Z},\mathcal{D})$.
In particular, if $\mathcal{Z}$ is Frobenius, then $\mathcal{Z}/\mathcal{I}$ becomes a triangulated category. We call $\mathcal{Z}/\mathcal{I}$ the stable category associated to $\mathcal{Z}$.

Although we have defined the Frobenius condition on a triplet $(\mathcal{C},\mathcal{Z},\mathcal{D})$, it is essentially the same as the Frobenius condition on $\mathcal{Z}$ as follows (Corollary \ref{CorMax}).

\begin{prop}\label{Prop3to2}
Let $\mathcal{D}\subseteq\mathcal{D}^{\prime}\subseteq\mathcal{Z}$ be full additive replete subcategories satisfying {\rm (DS)}. If $(\mathcal{C},\mathcal{Z},\mathcal{D})$ is Frobenius, so is $(\mathcal{C},\mathcal{Z},\mathcal{D}^{\prime})$. Moreover, we have $\mathcal{I}_{\mathcal{D}^{\prime}}=\mathcal{I}_{\mathcal{D}}$.
\end{prop}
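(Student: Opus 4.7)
\emph{Plan.} All three Frobenius conditions for $(\mathcal{C},\mathcal{Z},\mathcal{D}^{\prime})$ follow immediately once the final clause $\mathcal{I}_{\mathcal{D}^{\prime}}=\mathcal{I}_{\mathcal{D}}$ (and its dual $\mathcal{P}_{\mathcal{D}^{\prime}}=\mathcal{P}_{\mathcal{D}}$) is established: any inflation $X\to I$ with $I\in\mathcal{I}_{\mathcal{D}}$ witnesses enough injectives for $\mathcal{D}^{\prime}$ as well, symmetrically for projectives, and $\mathcal{I}_{\mathcal{D}^{\prime}}=\mathcal{I}_{\mathcal{D}}=\mathcal{P}_{\mathcal{D}}=\mathcal{P}_{\mathcal{D}^{\prime}}$. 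So the entire content lies in proving $\mathcal{I}_{\mathcal{D}^{\prime}}=\mathcal{I}_{\mathcal{D}}$; the $\mathcal{P}$-statement is symmetric.

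One inclusion is immediate from Remark \ref{RemIPThick}(2): $\mathcal{I}_{\mathcal{D}}=\mathcal{I}\cap\mathcal{D}\subseteq\mathcal{I}\cap\mathcal{D}^{\prime}=\mathcal{I}_{\mathcal{D}^{\prime}}$. For the reverse, fix $I\in\mathcal{I}_{\mathcal{D}^{\prime}}$; we must show $I\in\mathcal{D}$. Use enough injectives for $(\mathcal{C},\mathcal{Z},\mathcal{D})$ to embed $I$ in a conflation
\[\Omega Z\overset{e}{\longrightarrow}I\overset{\alpha}{\longrightarrow}J\overset{g}{\longrightarrow}Z\overset{h}{\longrightarrow}\Sigma I\]
with $J\in\mathcal{I}_{\mathcal{D}}\subseteq\mathcal{D}$ and $Z\in\mathcal{Z}$. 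Since $I$ is injective in $\mathcal{Z}$, lifting $\mathrm{id}_I$ along the inflation $\alpha$ yields a retraction $\beta\colon J\to I$ with $\beta\alpha=\mathrm{id}_I$. I then aim to show $J\cong I\oplus Z$ in $\mathcal{C}$; coupled with $J\in\mathcal{D}$ and $I,Z\in\mathcal{Z}$, condition (DS) will force $I\in\mathcal{D}$, whence $I\in\mathcal{D}\cap\mathcal{I}=\mathcal{I}_{\mathcal{D}}$.

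The core obstacle is producing the splitting $J\cong I\oplus Z$ from a mere retraction of $\alpha$, absent an abelian or triangulated structure. The argument proceeds in two stages, both via the long exact sequence of Proposition \ref{PropExact}(1). \emph{Stage 1:} Show $h=0$. Applying $\mathcal{C}(-,\Sigma I)$ to the triangle shifted once by (RTR2), exactness yields $\ker h^{\ast}=\mathrm{im}\,(-(\Sigma\alpha)^{\ast})$; the image of $-\Sigma\beta\in\mathcal{C}(\Sigma J,\Sigma I)$ under $-(\Sigma\alpha)^{\ast}$ equals $(\Sigma\beta)\circ(\Sigma\alpha)=\Sigma(\beta\alpha)=\mathrm{id}_{\Sigma I}$, so $\mathrm{id}_{\Sigma I}\in\ker h^{\ast}$ and hence $h=h^{\ast}(\mathrm{id}_{\Sigma I})=0$. \emph{Stage 2:} Define $\phi=(\beta,g)^{T}\colon J\to I\oplus Z$, and verify that $\phi^{\ast}\colon\mathcal{C}(I\oplus Z,E)\to\mathcal{C}(J,E)$ is bijective for every $E\in\mathcal{C}$: injectivity follows by composing with $\alpha$ to annihilate the $I$-component and then invoking $\ker g^{\ast}=\mathrm{im}\,h^{\ast}=0$; surjectivity, by first choosing $z_I=y\circ\alpha$ for $y\in\mathcal{C}(J,E)$ and then using $\ker\alpha^{\ast}=\mathrm{im}\,g^{\ast}$ to find $z_Z\in\mathcal{C}(Z,E)$ with $z_Z\circ g=y-y\alpha\beta$. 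Yoneda then forces $\phi$ itself to be an isomorphism. The dual argument, replacing inflations and injectives by deflations and projectives throughout, yields $\mathcal{P}_{\mathcal{D}^{\prime}}=\mathcal{P}_{\mathcal{D}}$ and closes the proof.
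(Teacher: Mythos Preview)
Your proposal is correct and follows essentially the same route as the paper (isolated there as Lemma~\ref{LemNotDepend}): embed a given $I\in\mathcal{I}_{\mathcal{D}'}$ into a conflation with middle term $J\in\mathcal{I}_{\mathcal{D}}$, show $h=0$, split $J\cong I\oplus Z$, and apply {\rm (DS)}. The only differences are cosmetic---the paper obtains $h=0$ from the left-triangle identity $e=\beta\circ\alpha\circ e=0$ (hence $h=-\psi(e)=0$) rather than by shifting and applying Proposition~\ref{PropExact}, and it writes down the splitting section $s\colon Z\to J$ explicitly instead of invoking Yoneda.
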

\begin{proof}
This immediately follows from the lemma below.
\end{proof}

\begin{lem}\label{LemNotDepend}
Let $\mathcal{D}\subseteq\mathcal{D}^{\prime}\subseteq\mathcal{Z}$ be as in Proposition \ref{Prop3to2}. If $(\mathcal{C},\mathcal{Z},\mathcal{D})$ has enough injectives, then we have $\mathcal{I}_{\mathcal{D}^{\prime}}=\mathcal{I}_{\mathcal{D}}$. Similarly for projectives.
\end{lem}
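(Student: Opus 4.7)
It suffices to treat the injective case; the projective case follows by duality. The inclusion $\mathcal{I}_{\mathcal{D}}\subseteq\mathcal{I}_{\mathcal{D}'}$ is formal from the definitions, since the injectivity property on an object does not depend on whether we require it to lie in $\mathcal{D}$ or in the larger $\mathcal{D}'$. Consequently, the content of the lemma is the reverse inclusion, which amounts to showing that every $I\in\mathcal{I}_{\mathcal{D}'}$ actually lies in $\mathcal{D}$.

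The plan is to invoke the enough-injectives hypothesis for $(\mathcal{C},\mathcal{Z},\mathcal{D})$ to obtain an inflation $\alpha\colon I\to J$ with $J\in\mathcal{I}_{\mathcal{D}}\subseteq\mathcal{D}$, fix an associated conflation
\[ \Omega Z\overset{e}{\longrightarrow}I\overset{\alpha}{\longrightarrow}J\overset{g}{\longrightarrow}Z\overset{h}{\longrightarrow}\Sigma I, \]
lift $\mathrm{id}_I$ through $\alpha$ using the injectivity of $I$ to obtain $\beta\colon J\to I$ with $\beta\alpha=\mathrm{id}_I$, and then prove $J\cong I\oplus Z$ inside $\mathcal{Z}$. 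Once this decomposition is in hand, condition {\rm (DS)} applied to $J\in\mathcal{D}$ forces $I\in\mathcal{D}$, and the proof is complete.

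The direct sum decomposition is the technical heart. Since consecutive arrows in a left triangle compose to zero (a standard consequence of the exact sequences in Proposition \ref{PropExact}), one has $\alpha\circ e=0$, and therefore $e=(\beta\alpha)\circ e=\beta\circ(\alpha e)=0$; hence $h=-\psi(e)=0$. With $h=0$, the exactness in Proposition \ref{PropExact}(1) applied to the right triangle upgrades $g$ to a right-cancelable morphism: any $\phi$ with $\phi g=0$ must vanish. The same exactness applied to $\mathcal{C}(-,J)$, together with the identity $(\mathrm{id}_J-\alpha\beta)\circ\alpha=0$, produces $\sigma\colon Z\to J$ such that $\sigma g=\mathrm{id}_J-\alpha\beta$. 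The relations $g\alpha=0$ and $\beta\alpha=\mathrm{id}_I$, combined with the cancelation property of $g$, then force $g\sigma=\mathrm{id}_Z$ and $\beta\sigma=0$.

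With these identities, the morphisms $(\beta,g)\colon J\to I\oplus Z$ and $(\alpha,\sigma)\colon I\oplus Z\to J$ are visibly mutually inverse by a direct matrix computation, yielding $J\cong I\oplus Z$ in $\mathcal{Z}$. I expect the main obstacle to be the splitting step itself, in particular the vanishing of the connecting morphism $e$ and the subsequent construction of a section of $g$; once $h=0$, the rest reduces to routine linear algebra inside $\mathcal{C}$, and the projective statement is obtained by running everything dually.
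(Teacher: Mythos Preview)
Your proof is correct and follows essentially the same route as the paper's: both reduce to showing $\mathcal{I}_{\mathcal{D}'}\subseteq\mathcal{D}$, embed a given $I\in\mathcal{I}_{\mathcal{D}'}$ into some $J\in\mathcal{I}_{\mathcal{D}}$ via enough injectives, split the resulting conflation using injectivity of $I$ (first deducing $e=0$ and $h=0$, then producing a section of $g$), and conclude $J\cong I\oplus Z$ so that {\rm (DS)} forces $I\in\mathcal{D}$. Your version is slightly more explicit in verifying the matrix identities for the splitting, but the argument is the same.
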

\begin{proof}
Remark that $\mathcal{I}_{\mathcal{D}}=\mathcal{I}_{\mathcal{D^{\prime}}}\cap\mathcal{D}$. Thus it suffices to show $\mathcal{I}_{\mathcal{D}^{\prime}}\subseteq\mathcal{D}$.

Since $(\mathcal{C},\mathcal{Z},\mathcal{D})$ has enough injectives, for any $I^{\prime}\in\mathcal{I}_{\mathcal{D}^{\prime}}$, there exists a conflation
\[ \Omega Z\overset{e}{\longrightarrow} I^{\prime}\overset{f}{\longrightarrow}I\overset{g}{\longrightarrow}Z\overset{h}{\longrightarrow}\Sigma I^{\prime}, \]
where $Z\in\mathcal{Z}$ and $I\in\mathcal{I}_{\mathcal{D}}$. Since $I^{\prime}\in\mathcal{I}_{\mathcal{D}^{\prime}}$, there exists $p\in\mathcal{Z}(I,I^{\prime})$ such that $p\circ f=\mathrm{id}_{I^{\prime}}$. By $f\circ e=0$, we have $e=p\circ f\circ e=0$, and thus $h=0$. By $(\mathrm{id}_I-f\circ p)\circ f=0$, there exists $s\in\mathcal{Z}(Z,I)$ such that $s\circ g=\mathrm{id}_I-f\circ p$. Since $(\mathrm{id}_Z-g\circ s)\circ g=0$, $\mathrm{id}_Z-g\circ s$ factors through $h=0$, namely, we have $\mathrm{id}_Z=g\circ s$. Thus we obtain $I=I^{\prime}\oplus Z$. Since $\mathcal{D}$ is closed under finite direct summands in $\mathcal{Z}$, it follows $I^{\prime}\in\mathcal{D}$.
\end{proof}

Thus if $(\mathcal{C},\mathcal{Z},\mathcal{D})$ is a Frobenius triplet, then $\mathcal{Z}$ is Frobenius, and satisfies $\mathcal{I}=\mathcal{I}_{\mathcal{D}}$. In particular, their stable categories are equivalent.

\begin{cor}\label{CorMax}
For any extension-closed subcategory $\mathcal{Z}\subseteq\mathcal{C}$, the following are equivalent.
\begin{enumerate}
\item $\mathcal{Z}$ is Frobenius.
\item There exists a full additive replete subcategory $\mathcal{D}\subseteq\mathcal{Z}$ satisfying {\rm (DS)} such that $(\mathcal{C},\mathcal{Z},\mathcal{D})$ is Frobenius.
\end{enumerate}
\end{cor}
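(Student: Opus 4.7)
The plan is to reduce Corollary \ref{CorMax} directly to Proposition \ref{Prop3to2}, which has already done the real work. The statement is essentially the observation that Frobeniusness is intrinsic to $\mathcal{Z}$ and does not depend on the auxiliary choice of $\mathcal{D}$, so the existence of one such $\mathcal{D}$ is equivalent to the canonical choice $\mathcal{D}=\mathcal{Z}$ working.

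For the implication (1) $\Rightarrow$ (2), I would simply take $\mathcal{D}=\mathcal{Z}$. Then condition (DS) is automatic, since it only requires that if $D\in\mathcal{Z}$ decomposes as $Z_{1}\oplus Z_{2}$ with $Z_{i}\in\mathcal{Z}$, then $Z_{1},Z_{2}\in\mathcal{Z}$, which holds tautologically. By Definition \ref{DefFrobCond} the hypothesis that ``$\mathcal{Z}$ is Frobenius'' is literally the statement that $(\mathcal{C},\mathcal{Z},\mathcal{Z})$ is Frobenius, so $\mathcal{D}=\mathcal{Z}$ witnesses (2).

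For the implication (2) $\Rightarrow$ (1), suppose $\mathcal{D}\subseteq\mathcal{Z}$ satisfies (DS) and $(\mathcal{C},\mathcal{Z},\mathcal{D})$ is Frobenius. I apply Proposition \ref{Prop3to2} to the inclusion chain $\mathcal{D}\subseteq\mathcal{D}':=\mathcal{Z}$, using again that $\mathcal{Z}\subseteq\mathcal{Z}$ trivially satisfies (DS). The proposition then yields that $(\mathcal{C},\mathcal{Z},\mathcal{Z})$ is Frobenius, which by the convention of Definition \ref{DefFrobCond}(3) means exactly that $\mathcal{Z}$ is Frobenius.

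There is essentially no obstacle here: all the content sits in Proposition \ref{Prop3to2} (and hence Lemma \ref{LemNotDepend}), whose proof uses the Frobenius injectivity of $I'$ and the triviality of $h$ to split the conflation $\Omega Z\to I'\to I\to Z\to \Sigma I'$ and identify $I'$ as a direct summand of $I\in\mathcal{I}_{\mathcal{D}}$, together with closure of $\mathcal{D}$ under direct summands in $\mathcal{Z}$. The corollary itself is just packaging.
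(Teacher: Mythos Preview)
Your proposal is correct and matches the paper's approach exactly: the paper states this corollary without an explicit proof, having just remarked (immediately after Proposition~\ref{Prop3to2}) that if $(\mathcal{C},\mathcal{Z},\mathcal{D})$ is Frobenius then so is $\mathcal{Z}$, which is precisely your application of Proposition~\ref{Prop3to2} with $\mathcal{D}'=\mathcal{Z}$; the converse direction via $\mathcal{D}=\mathcal{Z}$ is, as you say, tautological.
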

Moreover, there exists the minimum one.
\begin{cor}\label{CorCor}
If $\mathcal{Z}$ is Frobenius, there exists the minimum $\mathcal{D}$, which makes $(\mathcal{C},\mathcal{Z},\mathcal{D})$ Frobenius.
\end{cor}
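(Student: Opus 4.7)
The plan is to take $\mathcal{D}=\mathcal{I}$ itself; under the Frobenius hypothesis on $\mathcal{Z}$ we have $\mathcal{I}=\mathcal{P}$, so this is consistent. The argument then splits into two halves: verify that $(\mathcal{C},\mathcal{Z},\mathcal{I})$ is itself Frobenius, and show that $\mathcal{I}$ is contained in every admissible $\mathcal{D}$.

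For admissibility of $\mathcal{I}$ as the third member of the triplet, I would cite Remark \ref{RemIPThick}(1), which says that $\mathcal{I}$ is a full additive replete subcategory of $\mathcal{Z}$ closed under finite direct summands in $\mathcal{Z}$; in particular it satisfies (DS). Then, using Remark \ref{RemIPThick}(2) and the dual, one computes $\mathcal{I}_{\mathcal{I}}=\mathcal{I}\cap\mathcal{I}=\mathcal{I}$ and $\mathcal{P}_{\mathcal{I}}=\mathcal{P}\cap\mathcal{I}=\mathcal{I}$, so in particular $\mathcal{I}_{\mathcal{I}}=\mathcal{P}_{\mathcal{I}}$. The enough-injectives and enough-projectives conditions for $(\mathcal{C},\mathcal{Z},\mathcal{I})$ are then inherited directly from $\mathcal{Z}$ being Frobenius: an inflation $X\to I$ with $I\in\mathcal{I}_{\mathcal{Z}}=\mathcal{I}$ also lands in $\mathcal{I}_{\mathcal{I}}=\mathcal{I}$, and dually for deflations.

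For minimality, I would take any $\mathcal{D}\subseteq\mathcal{Z}$ satisfying (DS) with $(\mathcal{C},\mathcal{Z},\mathcal{D})$ Frobenius. Since such a triplet has enough injectives, applying Lemma \ref{LemNotDepend} to the inclusion $\mathcal{D}\subseteq\mathcal{Z}$ gives $\mathcal{I}=\mathcal{I}_{\mathcal{Z}}=\mathcal{I}_{\mathcal{D}}\subseteq\mathcal{D}$, so $\mathcal{I}$ sits inside every candidate.

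There is essentially no obstacle. All the real content has been packaged into Lemma \ref{LemNotDepend} and Remark \ref{RemIPThick}; the only delicate point worth checking is that $\mathcal{I}_{\mathcal{I}}=\mathcal{I}$, rather than a proper subcategory, and this is immediate from the characterization $\mathcal{I}_{\mathcal{D}}=\mathcal{I}\cap\mathcal{D}$ in Remark \ref{RemIPThick}(2).
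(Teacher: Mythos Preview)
Your proposal is correct and follows essentially the same route as the paper: take $\mathcal{D}=\mathcal{I}$, use Remark~\ref{RemIPThick} for (DS) and to compute $\mathcal{I}_{\mathcal{I}}=\mathcal{I}=\mathcal{P}=\mathcal{P}_{\mathcal{I}}$, and deduce minimality from $\mathcal{I}=\mathcal{I}_{\mathcal{D}}\subseteq\mathcal{D}$ via Lemma~\ref{LemNotDepend}. Your version is slightly more explicit in spelling out enough injectives/projectives and in citing Lemma~\ref{LemNotDepend} for the minimality step, but the argument is the same.
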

\begin{proof}
We show $\mathcal{I}$ satisfies the desired conditions. By Remark \ref{RemIPThick}, $\mathcal{I}\subseteq\mathcal{Z}$ is a full additive replete subcategory satisfying {\rm (DS)}. If $\mathcal{Z}$ is Frobenius, it immediately follows that
\[ {\mathcal{I}_{\mathcal{I}}}=\mathcal{I}=\mathcal{P}=\mathcal{P}_{\mathcal{I}}, \]
and $(\mathcal{C},\mathcal{Z},\mathcal{I})$ becomes Frobenius.
Obviously $\mathcal{I}$ is the minimum one, since any Frobenius triplet $(\mathcal{C},\mathcal{Z},\mathcal{D})$ satisfies $\mathcal{I}=\mathcal{I}_{\mathcal{D}}\subseteq\mathcal{D}$.
\end{proof}

When $\mathcal{C}$ is a triangulated category and if $\mathcal{D}\subseteq\mathcal{Z}$ is a full additive replete subcategory satisfying {\rm (DS)} and
\[ \mathcal{C}(\Omega\mathcal{Z},\mathcal{D})=\mathcal{C}(\mathcal{D},\Sigma\mathcal{Z})=0, \]
then $(\mathcal{C},\mathcal{Z},\mathcal{D})$ is Frobenius if and only if $(\mathcal{Z},\mathcal{Z})$ is a $\mathcal{D}$-mutation pair. (We also remark that if there exists one such $\mathcal{D}$, then it is unique and must agree with the full subcategory of $\mathcal{Z}$ consisting of those $D\in\mathcal{Z}$ satisfying $\mathcal{C}(\Omega\mathcal{Z},D)=\mathcal{C}(D,\Sigma\mathcal{Z})=0$.)

Namely, we have the following.
\begin{claim}
Let $\mathcal{D}\subseteq\mathcal{Z}$ be a full additive replete subcategory satisfying {\rm (DS)}. The following are equivalent.
\begin{enumerate}
\item $(\mathcal{C},\mathcal{Z},\mathcal{D})$ is Frobenius, and $\mathcal{C}(\Omega\mathcal{Z},\mathcal{D})=\mathcal{C}(\mathcal{D},\Sigma\mathcal{Z})=0$.
\item $(\mathcal{Z},\mathcal{Z})$ is a $\mathcal{D}$-mutation pair.
\end{enumerate}
\end{claim}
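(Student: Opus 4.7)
The plan is to prove both implications by a direct translation between the language of $\mathcal{D}$-mutation pairs and that of Frobenius triplets. The key observation that trivialises half of the Frobenius condition is Example~\ref{ExExtSub}(2): under the Hom-vanishing $\mathcal{C}(\Omega\mathcal{Z},\mathcal{D})=\mathcal{C}(\mathcal{D},\Sigma\mathcal{Z})=0$ we automatically have $\mathcal{I}_{\mathcal{D}}=\mathcal{P}_{\mathcal{D}}=\mathcal{D}$, so the equality $\mathcal{I}_{\mathcal{D}}=\mathcal{P}_{\mathcal{D}}$ appearing in Definition~\ref{DefFrobCond} comes for free. Moreover, by Example~\ref{ExTwoCases}(2), in a triangulated $\mathcal{C}$ an extension is nothing but a distinguished triangle, so a conflation is precisely a distinguished triangle all three of whose middle terms lie in $\mathcal{Z}$.

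For $(1)\Rightarrow(2)$, condition~(i) in the definition of a $\mathcal{D}$-mutation pair is immediate from the Hom-vanishing: via the adjunction $\psi$ it amounts to $\mathcal{C}(\mathcal{Z},\Sigma\mathcal{D})=\mathcal{C}(\mathcal{D},\Sigma\mathcal{Z})=0$, which is $\mathcal{C}(\mathcal{Z},\mathcal{D}[1])=\mathcal{C}(\mathcal{D},\mathcal{Z}[1])=0$. For condition~(ii), fix $X\in\mathcal{Z}$; having enough injectives supplies an inflation $X\to I$ with $I\in\mathcal{I}_{\mathcal{D}}=\mathcal{D}$, which by definition sits inside a conflation $\Omega Z\to X\to I\to Z\to\Sigma X$ with $Z\in\mathcal{Z}$. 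The underlying distinguished triangle $X\to I\to Z\to\Sigma X$ is exactly the one demanded by~(ii). Condition~(iii) follows symmetrically from having enough projectives.

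For $(2)\Rightarrow(1)$, the Hom-vanishing is just (i). For enough injectives, fix $X\in\mathcal{Z}$: condition~(ii) yields a distinguished triangle $X\to D\to Z\to\Sigma X$ with $D\in\mathcal{D}$ and $Z\in\mathcal{Z}$; since $\mathcal{D}\subseteq\mathcal{Z}$, all three middle terms lie in $\mathcal{Z}$, so by Example~\ref{ExTwoCases}(2) this triangle is a conflation and $X\to D$ is an inflation whose target lies in $\mathcal{I}_{\mathcal{D}}=\mathcal{D}$. Having enough projectives follows analogously from~(iii), and $\mathcal{I}_{\mathcal{D}}=\mathcal{P}_{\mathcal{D}}$ is automatic from Example~\ref{ExExtSub}(2).

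There is no genuine obstacle here beyond the bookkeeping; the whole argument is a dictionary lookup once one notes that the triangles requested by the mutation-pair axioms are exactly the conflations witnessing the existence of enough injectives and projectives, and that the identification $\mathcal{I}_{\mathcal{D}}=\mathcal{P}_{\mathcal{D}}=\mathcal{D}$ has already been recorded in Example~\ref{ExExtSub}(2).
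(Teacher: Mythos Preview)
Your argument is correct and is exactly the unwinding the paper has in mind: the paper states this Claim without an explicit proof, treating it as an immediate consequence of Example~\ref{ExExtSub}(2) together with the identification of conflations with distinguished triangles from Example~\ref{ExTwoCases}(2). Your write-up is the natural way to fill in these details, and there is nothing to add.
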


Regarding  Corollary \ref{CorMax} and Corollary \ref{CorCor}, we obtain the following.
\begin{cor}\label{CorFrobenius}
For any $\mathcal{Z}$, the following are equivalent.
\begin{enumerate}
\item $\mathcal{Z}$ is Frobenius, and $\mathcal{C}(\Omega\mathcal{Z},\mathcal{I})=\mathcal{C}(\mathcal{I},\Sigma\mathcal{Z})=0$.
\item $(\mathcal{Z},\mathcal{Z})$ is an $\mathcal{I}$-mutation pair.
\end{enumerate}
\end{cor}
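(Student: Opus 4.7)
The plan is to work in the triangulated setting fixed just before the claim, where extensions coincide with distinguished triangles and a conflation is simply a distinguished triangle whose three vertices all lie in $\mathcal{Z}$. First observe that, via the adjunction $\psi$ (so $\Omega \cong \Sigma^{-1}$), the orthogonality $\mathcal{C}(\Omega\mathcal{Z}, \mathcal{D}) = 0$ in (1) is equivalent to $\mathcal{C}(\mathcal{Z}, \Sigma\mathcal{D}) = 0$. Combined with $\mathcal{C}(\mathcal{D}, \Sigma\mathcal{Z}) = 0$, this is exactly clause (1) of the $\mathcal{D}$-mutation pair definition recalled in the introduction.

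For (2) $\Rightarrow$ (1): Given a $\mathcal{D}$-mutation pair, the orthogonality part of (1) is immediate. Mutation clauses (2) and (3) supply, for each $X \in \mathcal{Z}$, a distinguished triangle $X \to D \to Z \to \Sigma X$ with $D \in \mathcal{D} \subseteq \mathcal{Z}$ and $Z \in \mathcal{Z}$ (and dually on the other side); since all three vertices are in $\mathcal{Z}$, this is a conflation, so $X \to D$ is an inflation into $\mathcal{D}$ (and dually a deflation $D \to Z$ from $\mathcal{D}$). The core step is showing $\mathcal{D} \subseteq \mathcal{I}_{\mathcal{D}}$: given any inflation $X' \to Y'$ with conflation $X' \to Y' \to Z' \to \Sigma X'$ and any $D \in \mathcal{D}$, apply the cohomological functor $\mathcal{C}(-, D)$ to the rotated triangle $\Omega Z' \to X' \to Y' \to Z'$ to obtain an exact sequence
\[ \mathcal{C}(Y', D) \to \mathcal{C}(X', D) \to \mathcal{C}(\Omega Z', D), \]
whose last term vanishes by $\mathcal{C}(\Omega\mathcal{Z}, \mathcal{D}) = 0$. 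Hence the restriction map is surjective, so $D \in \mathcal{I}_{\mathcal{D}}$. Dually $\mathcal{D} \subseteq \mathcal{P}_{\mathcal{D}}$, and since $\mathcal{I}_{\mathcal{D}}, \mathcal{P}_{\mathcal{D}} \subseteq \mathcal{D}$ by definition, we get $\mathcal{I}_{\mathcal{D}} = \mathcal{P}_{\mathcal{D}} = \mathcal{D}$, making $(\mathcal{C}, \mathcal{Z}, \mathcal{D})$ Frobenius.

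For (1) $\Rightarrow$ (2): Clause (1) of the mutation-pair definition is immediate from the assumed orthogonality. For clause (2), given $X \in \mathcal{Z}$, enough injectives supplies an inflation $\alpha \colon X \to I$ with $I \in \mathcal{I}_{\mathcal{D}} \subseteq \mathcal{D}$; by Corollary \ref{CorExtension} this extends to a conflation $X \to I \to Z \to \Sigma X$ with $Z \in \mathcal{Z}$, which is precisely the desired triangle. Clause (3) follows dually from enough projectives.

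No serious obstacle is anticipated; the argument is a direct dictionary between the two formulations. The only genuinely substantive point is the identification $\mathcal{I}_{\mathcal{D}} = \mathcal{D}$ (and its dual) in the forward direction: the orthogonality hypothesis is exactly what forces every object of $\mathcal{D}$ to be injective relative to inflations via the long exact sequence, which explains why, under these orthogonality assumptions, the Frobenius condition collapses into the pure existence statements of the mutation-pair axioms.
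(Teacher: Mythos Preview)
Your argument is correct and follows the same route as the paper: the Corollary is simply the preceding Claim specialized to $\mathcal{D}=\mathcal{I}$, and what you have written is essentially a direct proof of that Claim (which the paper itself leaves unproved, relying on Example~\ref{ExExtSub}(2)). Two small points of presentation. First, you use $\mathcal{D}$ throughout while the statement is about $\mathcal{I}$; this is harmless since your argument is the general one, but when $\mathcal{D}=\mathcal{I}$ the step ``$\mathcal{D}\subseteq\mathcal{I}_{\mathcal{D}}$'' is vacuous and the only substantive orthogonality computation is the dual one showing $\mathcal{I}\subseteq\mathcal{P}$. Second, your (2)$\Rightarrow$(1) concludes with ``$(\mathcal{C},\mathcal{Z},\mathcal{D})$ is Frobenius'' rather than ``$\mathcal{Z}$ is Frobenius''; the passage between these two (in particular, that $\mathcal{P}=\mathcal{I}$ and not merely $\mathcal{P}\supseteq\mathcal{I}$) is exactly what Proposition~\ref{Prop3to2} and Corollary~\ref{CorMax} supply, which is why the paper invokes them explicitly.
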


\section{Triangulation on the stable category}
\label{SecMainThm}
In this section, as a main theorem, we show give a triangulation on the stable category associated to an extension-closed subcategory of a pseudo-triangulated category satisfying the following condition.
Remark that this condition is trivially satisfied in the two cases of Example \ref{ExTwoCases0}.
\begin{condition}\label{CondPTR}
Let
\begin{eqnarray*}
\Omega C\overset{e}{\longrightarrow}A\overset{f}{\longrightarrow}B\overset{g}{\longrightarrow}C\overset{h}{\longrightarrow}\Sigma A,\\
\Omega C^{\prime}\overset{e^{\prime}}{\longrightarrow}A^{\prime}\overset{f^{\prime}}{\longrightarrow}B^{\prime}\overset{g^{\prime}}{\longrightarrow}C^{\prime}\overset{h^{\prime}}{\longrightarrow}\Sigma A^{\prime}
\end{eqnarray*}
be extensions.
\begin{enumerate}
\item[(AC1)] If $c\in\mathcal{C}(C,C^{\prime})$ satisfies $h^{\prime}\circ c=0$ and $c\circ g=0$, then there exists $c^{\prime}\in\mathcal{C}(C,B^{\prime})$ such that $g^{\prime}\circ c^{\prime}=c$.
\item[(AC2)] If $a\in\mathcal{C}(A,A^{\prime})$ satisfies $f^{\prime}\circ a=0$ and $a\circ e=0$, then there exists $a^{\prime}\in\mathcal{C}(B,A^{\prime})$ such that $a^{\prime}\circ f=a$.
\end{enumerate}
\end{condition}

\begin{rem}If we impose the following conditions {\rm (1)} and {\rm (2)} on $\mathcal{C}$ (cf. \cite{BR}), then Condition \ref{CondPTR} is satisfied.
\begin{enumerate}
\item There exists an adjoint natural isomorphism
\[ \varphi_{A,B}\colon\mathcal{C}(\Sigma A,B)\overset{\cong}{\longrightarrow}\mathcal{C}(A,\Omega B) \quad (A,B\in\mathcal{C}). \]
\item Let
$A\overset{f}{\longrightarrow}B\overset{g}{\longrightarrow}C\overset{h}{\longrightarrow}\Sigma A$ and
$\Omega C^{\prime}\overset{e^{\prime}}{\longrightarrow}A^{\prime}\overset{f^{\prime}}{\longrightarrow}B^{\prime}\overset{g^{\prime}}{\longrightarrow}C^{\prime}$
be any object in $\triangleright$ and $\triangleleft$, respectively.

For any $a\in\mathcal{C}(A,\Omega C^{\prime})$ and $b\in\mathcal{C}(B,A^{\prime})$ satisfying $b\circ f=e^{\prime}\circ a$, there exists $c\in\mathcal{C}(C,B^{\prime})$ such that $c\circ g=f^{\prime}\circ b$ and $\varphi_{A,C^{\prime}}^{-1}(a)\circ\circ h=g^{\prime}\circ c$.

For any $c\in\mathcal{C}(C,B^{\prime})$ and $d\in\mathcal{C}(\Sigma A,C^{\prime})$ satisfying $d\circ h=g^{\prime}\circ c$, there exists $b\in\mathcal{C}(B,A^{\prime})$ such that $c\circ g=f^{\prime}\circ b$ and $b\circ f=e^{\prime}\circ\varphi_{A,C^{\prime}}(d)$.
\[
\xy
(-18,6)*+{A}="0";
(-6,6)*+{B}="2";
(6,6)*+{C}="4";
(18,6)*+{\Sigma A}="6";
(-18,-6)*+{\Omega C^{\prime}}="10";
(-6,-6)*+{A^{\prime}}="12";
(6,-6)*+{B^{\prime}}="14";
(18,-6)*+{C^{\prime}}="16";
{\ar^{f} "0";"2"};
{\ar^{g} "2";"4"};
{\ar^{h} "4";"6"};
{\ar_{e^{\prime}} "10";"12"};
{\ar_{f^{\prime}} "12";"14"};
{\ar_{g^{\prime}} "14";"16"};
{\ar_{} "0";"10"};
{\ar^{b} "2";"12"};
{\ar^{c} "4";"14"};
{\ar^{} "6";"16"};
{\ar@{}|\circlearrowright "0";"12"};
{\ar@{}|\circlearrowright "2";"14"};
{\ar@{}|\circlearrowright "4";"16"};
\endxy
\]
\end{enumerate}
\end{rem}

In the rest, $\mathcal{C}$ is assumed to satisfy Condition \ref{CorCor}. 
First, we construct the shift functor.
\begin{lem}\label{LemForSuspension}
Let 
\[
\xy
(-24,6)*+{\Omega Z}="-2";
(-12,6)*+{X}="0";
(0,6)*+{Y}="2";
(12,6)*+{Z}="4";
(24,6)*+{\Sigma X}="6";
(-24,-6)*+{\Omega S}="-12";
(-12,-6)*+{M}="10";
(0,-6)*+{I}="12";
(12,-6)*+{S}="14";
(24,-6)*+{\Sigma M}="16";
{\ar^{e} "-2";"0"};
{\ar^{f} "0";"2"};
{\ar^{g} "2";"4"};
{\ar^{h} "4";"6"};
{\ar_{\delta} "-12";"10"};
{\ar_{\alpha} "10";"12"};
{\ar_{\beta} "12";"14"};
{\ar_{\gamma} "14";"16"};
{\ar_{} "-2";"-12"};
{\ar^{x} "0";"10"};
{\ar^{y} "2";"12"};
{\ar^{z} "4";"14"};
{\ar^{\Sigma x} "6";"16"};
{\ar@{}|\circlearrowright "-2";"10"};
{\ar@{}|\circlearrowright "0";"12"};
{\ar@{}|\circlearrowright "2";"14"};
{\ar@{}|\circlearrowright "4";"16"};
\endxy
\ ,\ 
\xy
(-24,6)*+{\Omega Z}="-2";
(-12,6)*+{X}="0";
(0,6)*+{Y}="2";
(12,6)*+{Z}="4";
(24,6)*+{\Sigma X}="6";
(-24,-6)*+{\Omega S}="-12";
(-12,-6)*+{M}="10";
(0,-6)*+{I}="12";
(12,-6)*+{S}="14";
(24,-6)*+{\Sigma M}="16";
{\ar^{e} "-2";"0"};
{\ar^{f} "0";"2"};
{\ar^{g} "2";"4"};
{\ar^{h} "4";"6"};
{\ar_{\delta} "-12";"10"};
{\ar_{\alpha} "10";"12"};
{\ar_{\beta} "12";"14"};
{\ar_{\gamma} "14";"16"};
{\ar_{} "-2";"-12"};
{\ar^{x^{\prime}} "0";"10"};
{\ar^{y^{\prime}} "2";"12"};
{\ar^{z^{\prime}} "4";"14"};
{\ar^{\Sigma x^{\prime}} "6";"16"};
{\ar@{}|\circlearrowright "-2";"10"};
{\ar@{}|\circlearrowright "0";"12"};
{\ar@{}|\circlearrowright "2";"14"};
{\ar@{}|\circlearrowright "4";"16"};
\endxy
\]
be morphisms of conflations, with $I\in\mathcal{I}_{\mathcal{D}}$. Then $\underline{x}=\underline{x^{\prime}}$ in $\mathcal{Z}/\mathcal{I}_{\mathcal{D}}$ implies $\underline{z}=\underline{z^{\prime}}$ in $\mathcal{Z}/\mathcal{I}_{\mathcal{D}}$.
\end{lem}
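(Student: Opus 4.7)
The plan is to use additivity to reduce to the following statement: if $(x,y,z)$ is a single morphism of conflations and $x$ factors through some object of $\mathcal{I}_{\mathcal{D}}$, then so does $z$. Indeed, since $(x-x', y-y', z-z')$ is again a morphism of conflations (all three defining relations being $\mathbb{Z}$-linear in the triple), applying the reduced statement finishes the lemma. So fix a factorization $x = v \circ u$ with $u \in \mathcal{Z}(X, J)$, $v \in \mathcal{Z}(J, M)$, and $J \in \mathcal{I}_{\mathcal{D}}$.

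Next I would construct the lift on the $Z$-level in two stages. First, because $J$ is injective and $f$ is an inflation, choose $\bar{u} \in \mathcal{Z}(Y, J)$ with $\bar{u} \circ f = u$, and set $y_1 = y - \alpha v \bar{u} \colon Y \to I$. The equation $y f = \alpha x = \alpha v u$ forces $y_1 \circ f = 0$, so Proposition \ref{PropExact} applied to the top conflation and to $I$ produces $w \in \mathcal{Z}(Z, I)$ with $w g = y_1$. Then $\beta w g = \beta y - \beta \alpha v \bar{u} = z g$, hence $(z - \beta w) \circ g = 0$; Proposition \ref{PropExact} now applied for $\mathrm{Hom}(-, S)$ yields $\xi \in \mathcal{C}(\Sigma X, S)$ with $\xi h = z - \beta w$.

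The main obstacle is the residual term $\xi h$, and this is where Condition \ref{CondPTR} enters. I intend to apply (AC1) to the morphism $\xi h \colon Z \to S$. The condition $(\xi h) g = 0$ is immediate since $h g = 0$. For $\gamma \circ (\xi h) = 0$, compute
\[
\gamma \xi h \;=\; \gamma z - \gamma \beta w \;=\; \gamma z \;=\; (\Sigma x) \circ h \;=\; (\Sigma v)(\Sigma \bar{u})(\Sigma f) \circ h,
\]
which vanishes because $(\Sigma f) \circ h = 0$ by (RTR2) applied to the rotation of the top conflation. Thus (AC1) furnishes $c' \in \mathcal{Z}(Z, I)$ with $\beta \circ c' = \xi h$, and so $z = \beta (w + c')$ factors through $I \in \mathcal{I}_{\mathcal{D}}$, giving $\underline{z} = 0$ in $\mathcal{Z}/\mathcal{I}_{\mathcal{D}}$. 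This reproduces the abelian case (where $\Sigma = 0$ trivially kills $\xi h$) and the triangulated case (where (AC1) is automatic from the exactness of $\mathcal{C}(Z, -)$ on triangles); the pseudo-triangulated axioms (RTR2) together with (AC1) are precisely what is needed to merge the two settings.
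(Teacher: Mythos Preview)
Your argument is correct and follows essentially the same route as the paper's proof: reduce to the case $\underline{x}=0$, factor $x$ through an injective, extend along the inflation $f$, and finish with (AC1). Two minor differences: the paper shows $(\Sigma x)\circ h=0$ via the adjunction $\psi$ (namely $(\Sigma x)\circ h=-\psi(x\circ e)=0$ since $f\circ e=0$) rather than via $(\Sigma f)\circ h=0$, and the paper applies (AC1) directly to $z-\beta w$ without first factoring it as $\xi h$---your intermediate $\xi$ is harmless but unnecessary.
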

\begin{proof}
Obviously, it suffices to show that $\underline{x}=0$ implies $\underline{z}=0$ in the first diagram.

Since $\underline{x}=0$, there exist $I_0\in\mathcal{I}_{\mathcal{D}}$, $x_1\in\mathcal{Z}(X,I_0)$ and $x_2\in\mathcal{Z}(I_0,M)$ such that $x=x_2\circ x_1$.
Since $I_0\in\mathcal{I}_{\mathcal{D}}$ and $f$ is an inflation, there exists $x_3\in\mathcal{Z}(Y,I_0)$ such that $x_3\circ f=x_1$.
Thus we have $x\circ e=x_2\circ x_3\circ f\circ e=0$, which implies
\[ (\Sigma x)\circ h=-(\Sigma x)\circ\psi(e)=-\psi(x\circ e)=0. \]
Put $\eta=y-\alpha\circ x_2\circ x_3$.
By $\eta\circ f=0$, there exists $s\in\mathcal{Z}(Z,I)$ such that $s\circ g=\eta$. Thus we have
\begin{eqnarray*}
\gamma\circ(z-\beta\circ s)=\gamma\circ z=(\Sigma x)\circ h=0,\\
(z-\beta\circ s)\circ g=z\circ g-\beta\circ y=0.
\end{eqnarray*}
By {\rm (AC1)}, there exists $t\in\mathcal{Z}(Z,I)$ such that $z-\beta\circ s=\beta\circ t$, namely $z=\beta\circ(s+t)$.
\end{proof}

\begin{const}
Assume $(\mathcal{C},\mathcal{Z},\mathcal{D})$ has enough injectives. For any $X\in\mathcal{Z}$, take a conflation
\[ \Omega S_X\overset{\delta_X}{\longrightarrow}X\overset{\alpha_X}{\longrightarrow}I_X\overset{\beta_X}{\longrightarrow}S_X\overset{\gamma_X}{\longrightarrow}\Sigma X \]
with $I_X\in\mathcal{I}_{\mathcal{D}}$.
Define $S(X)=SX$ to be the image of $S_X$ in $\mathcal{Z}/\mathcal{I}_{\mathcal{D}}$.

For any morphism $f\in\mathcal{Z}(X,Y)$, take a conflation
\[ \Omega S_Y\overset{\delta_Y}{\longrightarrow}Y\overset{\alpha_Y}{\longrightarrow}I_Y\overset{\beta_Y}{\longrightarrow}S_Y\overset{\gamma_Y}{\longrightarrow}\Sigma Y \]
similarly for $Y$. Since $\alpha_X$ is an inflation and $I_Y\in\mathcal{I}_{\mathcal{D}}$, there exists $I_f\in\mathcal{Z}(I_X,I_Y)$ such that $I_f\circ\alpha_X=\alpha_Y\circ f$. By {\rm (RTR3)}, there exists $S_f\in\mathcal{Z}(S_X,S_Y)$ such that $(f,I_f,S_f)$ is a morphism of conflations.
\[
\xy
(-28,6)*+{\Omega S_X}="-2";
(-14,6)*+{X}="0";
(0,6)*+{I_X}="2";
(14,6)*+{S_X}="4";
(28,6)*+{\Sigma X}="6";
(-28,-6)*+{\Omega S_Y}="-12";
(-14,-6)*+{Y}="10";
(0,-6)*+{I_Y}="12";
(14,-6)*+{S_Y}="14";
(28,-6)*+{\Sigma Y}="16";
{\ar^{\delta_X} "-2";"0"};
{\ar^{\alpha_X} "0";"2"};
{\ar^{\beta_X} "2";"4"};
{\ar^{\gamma_X} "4";"6"};
{\ar_{\delta_Y} "-12";"10"};
{\ar_{\alpha_Y} "10";"12"};
{\ar_{\beta_Y} "12";"14"};
{\ar_{\gamma_Y} "14";"16"};
{\ar_{\Omega S_f} "-2";"-12"};
{\ar^{f} "0";"10"};
{\ar^{I_f} "2";"12"};
{\ar^{S_f} "4";"14"};
{\ar^{\Sigma f} "6";"16"};
{\ar@{}|\circlearrowright "-2";"10"};
{\ar@{}|\circlearrowright "0";"12"};
{\ar@{}|\circlearrowright "2";"14"};
{\ar@{}|\circlearrowright "4";"16"};
\endxy
\]
For any $\underline{f}\in\mathcal{Z}/\mathcal{I}_{\mathcal{D}}(X,Y)$, define $S\underline{f}$ to be the image $\underline{S_f}$ of $S_f$ in $\mathcal{Z}/\mathcal{I}_{\mathcal{D}}$. This is well-defined by Lemma \ref{LemForSuspension}, and the following proposition holds.
\end{const}

\begin{prop}
$S\colon\mathcal{Z}/\mathcal{I}_{\mathcal{D}}\rightarrow\mathcal{Z}/\mathcal{I}_{\mathcal{D}}$ gives an additive functor.
\end{prop}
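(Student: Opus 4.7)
The plan is to verify three things: (a) the image $SX$ in $\mathcal{Z}/\mathcal{I}_{\mathcal{D}}$ does not depend on the conflation chosen in the construction, (b) $S$ preserves identities and composition on morphisms, and (c) $S$ is additive on hom-sets. All three follow by the same mechanism: Lemma \ref{LemForSuspension} says that the rightmost component $\underline{z}$ of any morphism of conflations whose target conflation ends in an injective is determined by its leftmost component $\underline{x}$. So the strategy throughout is to produce two competing morphisms of conflations that agree on the left and then invoke this lemma.

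For (a), given two choices $\Omega S_X \to X \overset{\alpha_X}{\to} I_X \to S_X$ and $\Omega S_X' \to X \overset{\alpha_X'}{\to} I_X' \to S_X'$ of conflations with injective middle terms, I use (RTR3) in both directions (each $\alpha$ is an inflation into an object of $\mathcal{I}_{\mathcal{D}}$, so the defining factorization property gives the needed middle morphisms) to lift $\mathrm{id}_X$ to morphisms of conflations $(\mathrm{id}_X, u, s)$ and $(\mathrm{id}_X, u', s')$ pointing in opposite directions. Their compositions are morphisms of conflations with leftmost component $\mathrm{id}_X$, so Lemma \ref{LemForSuspension}, applied against the identity morphism of conflations, yields $\underline{s'\circ s} = \underline{\mathrm{id}_{S_X}}$ and $\underline{s\circ s'} = \underline{\mathrm{id}_{S_X'}}$; hence $\underline{s}$ is an isomorphism $SX \cong SX'$ in $\mathcal{Z}/\mathcal{I}_{\mathcal{D}}$. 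For the identity part of (b), the triple $(\mathrm{id}_X,\mathrm{id}_{I_X},\mathrm{id}_{S_X})$ is itself a valid lift, so $S(\underline{\mathrm{id}_X}) = \underline{\mathrm{id}_{S_X}}$. For composition, given $f\colon X\to Y$ and $g\colon Y\to Z$, the composite of the chosen lifts is a morphism of conflations $(g\circ f,\, I_g\circ I_f,\, S_g\circ S_f)$ from the $X$-conflation to the $Z$-conflation, while any lift of $g\circ f$ supplies another such morphism $(g\circ f,\, I_{g\circ f},\, S_{g\circ f})$; both have the same leftmost component, so Lemma \ref{LemForSuspension} gives $\underline{S_{g\circ f}} = \underline{S_g\circ S_f}$.

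For (c), given $f,g\in\mathcal{Z}(X,Y)$, the triple $(f+g,\, I_f + I_g,\, S_f + S_g)$ is a morphism of conflations (the defining square and pentagon identities are $\mathbb{Z}$-linear in the components), and comparing it against any lift of $f+g$ via Lemma \ref{LemForSuspension} yields $\underline{S_{f+g}} = \underline{S_f} + \underline{S_g}$. I expect no serious obstacle: the whole argument is bookkeeping around Lemma \ref{LemForSuspension}, which encapsulates the real content. The only mildly subtle point is part (a), where one must invoke the lemma twice, once on each side, to obtain mutually inverse isomorphisms in the quotient rather than a mere zig-zag of maps.
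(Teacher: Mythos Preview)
Your proposal is correct and takes exactly the paper's approach: the paper's proof is the one-liner ``This immediately follows from Lemma~\ref{LemForSuspension},'' and your parts (b) and (c) unpack precisely how that lemma yields preservation of identities, composition, and addition. Your part (a) is a small extra, since the Construction has already fixed one conflation per object, so independence of that choice is not strictly required for the proposition as stated; it is harmless and natural to note, but the proposition only needs (b) and (c).
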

\begin{proof}
This immediately follows from Lemma \ref{LemForSuspension}.
\end{proof}

\begin{rem}
Dually, if $(\mathcal{C},\mathcal{Z},\mathcal{D})$ has enough projectives, then we have an additive functor $S^{\ast}\colon\mathcal{Z}/\mathcal{P}_{\mathcal{D}}\rightarrow\mathcal{Z}/\mathcal{P}_{\mathcal{D}}$, defined by a conflation
\[ \Omega X\rightarrow S^{\ast}X\rightarrow P_X\rightarrow X\rightarrow\Sigma S^{\ast}X \]
for any $X\in\mathcal{Z}$, where $P_X\in\mathcal{P}_{\mathcal{D}}$.
\end{rem}

\begin{prop}
If $(\mathcal{C},\mathcal{Z},\mathcal{D})$ is Frobenius, then $S$ and $S^{\ast}$ are quasi-inverses.
\end{prop}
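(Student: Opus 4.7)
The Frobenius hypothesis $\mathcal{I}_{\mathcal{D}} = \mathcal{P}_{\mathcal{D}}$ makes $S$ and $S^{\ast}$ endofunctors on the common stable category $\underline{\mathcal{Z}} := \mathcal{Z}/\mathcal{I}_{\mathcal{D}} = \mathcal{Z}/\mathcal{P}_{\mathcal{D}}$. The plan is to produce a natural isomorphism $\eta : \mathrm{Id}_{\underline{\mathcal{Z}}} \overset{\sim}{\Rightarrow} S^{\ast} S$; the isomorphism $S S^{\ast} \overset{\sim}{\Rightarrow} \mathrm{Id}$ then follows by a symmetric argument. The key observation driving everything is that the conflation $\Omega S_X \rightarrow X \rightarrow I_X \rightarrow S_X \rightarrow \Sigma X$ chosen to define $SX$ has middle term $I_X \in \mathcal{I}_{\mathcal{D}} = \mathcal{P}_{\mathcal{D}}$, so it simultaneously qualifies as a conflation of the form used to compute $S^{\ast}(S_X)$.

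First I would establish the dual of Lemma \ref{LemForSuspension}: for two morphisms $(x,y,z),(x',y',z')$ of conflations whose \emph{source} has middle term in $\mathcal{P}_{\mathcal{D}}$, $\underline{z} = \underline{z'}$ in $\underline{\mathcal{Z}}$ implies $\underline{x} = \underline{x'}$. Reducing to $z' = 0$, factor $z$ through a projective object $P_0 \in \mathcal{P}_{\mathcal{D}}$; lift the right-hand piece through the target deflation using projectivity; then apply Proposition \ref{PropExact} dually (to $\mathcal{C}(P,-)$ on the target's left-triangulation part) to write $y$ as the sum of the desired factorization and a piece $\alpha \circ t$; finally invoke (AC2), noting that $x \circ \delta_X = 0$ follows from (LTR2) applied to the factorization of $z$. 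This simultaneously yields the well-definedness of $S^{\ast}$ and independence of the chosen $\mathcal{P}_{\mathcal{D}}$-conflation.

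For each $X$, write $(\mathrm{A})$ for the conflation $\Omega S_X \overset{\delta_X}{\rightarrow} X \overset{\alpha_X}{\rightarrow} I_X \overset{\beta_X}{\rightarrow} S_X \overset{\gamma_X}{\rightarrow} \Sigma X$ defining $SX$, and $(\mathrm{B})$ for a chosen conflation $\Omega S_X \rightarrow S^{\ast}_{S_X} \rightarrow P_{S_X} \overset{\beta'}{\rightarrow} S_X \rightarrow \Sigma S^{\ast}_{S_X}$ defining $S^{\ast} S_X$. Since $\beta_X$ is a deflation and $P_{S_X}$ is projective, $\beta'$ lifts through $\beta_X$; then (RTR3) produces a morphism of conflations $(\theta_X, p, \mathrm{id}_{S_X}) : (\mathrm{B}) \rightarrow (\mathrm{A})$. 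Since $I_X \in \mathcal{I}_{\mathcal{D}} = \mathcal{P}_{\mathcal{D}}$ is also projective, a reverse lift yields $(\theta'_X, q, \mathrm{id}_{S_X}) : (\mathrm{A}) \rightarrow (\mathrm{B})$. Both composites are endomorphisms of conflations over $\mathrm{id}_{S_X}$; comparing each with the identity morphism of conflations via the dual Lemma (whose source has $\mathcal{P}_{\mathcal{D}}$-middle in both cases), we conclude $\underline{\theta'_X \circ \theta_X} = \underline{\mathrm{id}}$ and $\underline{\theta_X \circ \theta'_X} = \underline{\mathrm{id}}$. Hence $\eta_X := \underline{\theta'_X}$ is an isomorphism $X \overset{\sim}{\rightarrow} S^{\ast} S X$ in $\underline{\mathcal{Z}}$.

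For naturality, given $f : X \rightarrow Y$ with its chosen lift $(f, I_f, S_f) : (\mathrm{A}_X) \rightarrow (\mathrm{A}_Y)$ representing $Sf$ and a chosen lift representing $S^{\ast}(Sf) : (\mathrm{B}_X) \rightarrow (\mathrm{B}_Y)$, both composites $(\eta_Y \circ f, \cdots, S_f)$ and $(S^{\ast}(Sf) \circ \eta_X, \cdots, S_f)$ are morphisms of conflations $(\mathrm{A}_X) \rightarrow (\mathrm{B}_Y)$ sharing the same third component $S_f$. The dual Lemma forces the first components to agree in $\underline{\mathcal{Z}}$, so $\eta$ is natural. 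The principal obstacle is a clean formulation and proof of the dual of Lemma \ref{LemForSuspension}; once that is in hand, the rest is bookkeeping in the category of morphisms of conflations.
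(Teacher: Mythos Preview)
Your argument is correct and is precisely the content hidden behind the paper's one-line proof (``This follows immediately from the definitions of $S$ and $S^{\ast}$''). The paper is simply asserting, without writing it out, that the defining conflation $X \to I_X \to S_X$ for $SX$ is equally a $\mathcal{P}_{\mathcal{D}}$-resolution of $S_X$ and hence computes $S^{\ast}(S_X)$ up to isomorphism in the stable category; you have supplied the details, including the dual of Lemma~\ref{LemForSuspension} and the naturality verification, that make this rigorous.

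One small correction: when you write ``then (RTR3) produces a morphism of conflations $(\theta_X, p, \mathrm{id}_{S_X})$'', you are filling in the \emph{leftmost} component from the two rightmost ones, so the axiom you want is (LTR3), not (RTR3). The same applies to the construction of $\theta'_X$. This does not affect the validity of the argument, since conflations lie in $\triangleleft$ as well as $\triangleright$.
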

\begin{proof}
This follows immediately from the definitions of $S$ and $S^{\ast}$.
\end{proof}

In the rest, $(\mathcal{C},\mathcal{Z},\mathcal{D})$ is assumed to be Frobenius.
Next, we define the class of distinguished triangles on $\mathcal{Z}/\mathcal{I}_D$.
\begin{dfn}
Let $\Omega Z\overset{e}{\longrightarrow}X\overset{f}{\longrightarrow}Y\overset{g}{\longrightarrow}Z\overset{h}{\longrightarrow}\Sigma X$ be any conflation, and take a conflation $\Omega S_X\overset{\delta_X}{\longrightarrow}X\overset{\alpha_X}{\longrightarrow}F_X\overset{\beta_X}{\longrightarrow}S_X\overset{\gamma_X}{\longrightarrow}\Sigma X$ where $I_X\in\mathcal{I}_{\mathcal{D}}$.

If there exist $p\in\mathcal{Z}(Y,I_X)$ and $q\in\mathcal{Z}(Z,S_X)$ satisfying
\begin{eqnarray*}
p\circ f=\alpha_X,\ \ q\circ g=\beta_X\circ p,\ \ \gamma_X\circ q=h
\end{eqnarray*}
(namely, $(\mathrm{id},p,q)$ is a morphism of conflations)
\[
\xy
(-24,6)*+{X}="0";
(-8,6)*+{Y}="2";
(8,6)*+{Z}="4";
(24,6)*+{\Sigma X}="6";
(-24,-6)*+{X}="10";
(-8,-6)*+{I_X}="12";
(8,-6)*+{S_X}="14";
(24,-6)*+{\Sigma X}="16";
{\ar^{f} "0";"2"};
{\ar^{g} "2";"4"};
{\ar^{h} "4";"6"};
{\ar_{\alpha_X} "10";"12"};
{\ar_{\beta_X} "12";"14"};
{\ar_{\gamma_X} "14";"16"};
{\ar@{=} "0";"10"};
{\ar^{p} "2";"12"};
{\ar^{q} "4";"14"};
{\ar@{=} "6";"16"};
{\ar@{}|\circlearrowright "0";"12"};
{\ar@{}|\circlearrowright "2";"14"};
{\ar@{}|\circlearrowright "4";"16"};
\endxy,
\]
then we call the sequence
\[ X\overset{\underline{f}}{\longrightarrow}Y\overset{\underline{g}}{\longrightarrow}Z\overset{\underline{q}}{\longrightarrow}SX \]
a {\it standard triangle}.
Remark that by {\rm (RTR3)} and the injectivity of $I_X$, there exists at least one such pair of morphisms $(p,q)$. We define the class of distinguished triangles $\triangle$ to be the category of triangles
\begin{equation}
X\rightarrow Y\rightarrow Z\rightarrow SZ
\label{triangle}
\end{equation}
in $\mathcal{Z}/\mathcal{I}_{\mathcal{D}}$, which are isomorphic to standard triangles.
\end{dfn}

In the rest, we show that $(\mathcal{Z}/\mathcal{I}_{\mathcal{D}},S,\triangle)$ is a triangulated category.

\begin{prop}\label{PropTR1}
$(\mathcal{Z}/\mathcal{I}_{\mathcal{D}},S,\triangle)$ satisfies {\rm (TR1)}.
\end{prop}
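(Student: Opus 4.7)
The plan is to verify the three parts of (TR1) in turn: closure under isomorphism is immediate from the definition of $\triangle$. For the identity triangle $X \xrightarrow{\mathrm{id}} X \to 0 \to SX$, Proposition \ref{PropSPL} with $B=0$ provides the conflation $\Omega 0 \xrightarrow{0} X \xrightarrow{\mathrm{id}} X \xrightarrow{0} 0 \xrightarrow{0} \Sigma X$, and $(\mathrm{id}_X, \alpha_X, 0)$ will be a morphism of conflations into the fixed $\Omega S_X \to X \xrightarrow{\alpha_X} I_X \xrightarrow{\beta_X} S_X \xrightarrow{\gamma_X} \Sigma X$ (using $\beta_X \circ \alpha_X = 0$ from Proposition \ref{PropExact}), realizing the identity triangle as standard.

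The main substance is embedding an arbitrary $\underline{f} \colon X \to Y$ into a distinguished triangle. Following Happel's trick, I would choose a lift $f \in \mathcal{Z}(X,Y)$ and form $\binom{f}{\alpha_X} \colon X \to Y \oplus I_X$. Since $p_{I_X} \circ \binom{f}{\alpha_X} = \alpha_X$ is $\Omega$-monic, Lemma \ref{LemSPL}(2) promotes $\binom{f}{\alpha_X}$ to an $\Omega$-monic, and Corollary \ref{CorExtension}(2) produces an extension
\[ \Omega Z \to X \xrightarrow{\binom{f}{\alpha_X}} Y \oplus I_X \xrightarrow{g} Z \to \Sigma X \]
in $\mathcal{C}$.

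The hard part will be to ensure $Z \in \mathcal{Z}$, which I would establish by applying Proposition \ref{PropOCT} to the three extensions (I) $\Omega S_X \to X \xrightarrow{\alpha_X} I_X \to S_X \to \Sigma X$, (II) the extension just built, and (III) the split conflation $\Omega I_X \xrightarrow{0} Y \xrightarrow{i_Y} Y \oplus I_X \xrightarrow{p_{I_X}} I_X \xrightarrow{0} \Sigma Y$ from Proposition \ref{PropSPL}, whose compatibility $p_{I_X} \circ \binom{f}{\alpha_X} = \alpha_X$ holds by construction. The resulting fourth extension is $\Omega S_X \to Y \xrightarrow{g \circ i_Y} Z \to S_X \to \Sigma Y$, which together with $Y, S_X \in \mathcal{Z}$ and extension-closedness of $\mathcal{Z}$ forces $Z \in \mathcal{Z}$; hence the original extension is a conflation, and it yields a standard triangle $X \xrightarrow{\binom{\underline{f}}{\underline{\alpha_X}}} Y \oplus I_X \xrightarrow{\underline{g}} Z \xrightarrow{\underline{q}} SX$ for some $q$ supplied by (RTR3) and the injectivity of $I_X$.

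Finally, $(\mathrm{id}_X, \underline{i_Y}, \mathrm{id}_Z, \mathrm{id}_{SX})$ will be an isomorphism in $\mathcal{Z}/\mathcal{I}_{\mathcal{D}}$ from the target triangle $X \xrightarrow{\underline{f}} Y \xrightarrow{\underline{g \circ i_Y}} Z \xrightarrow{\underline{q}} SX$ to this standard one, since $\underline{i_Y}$ is invertible with inverse $\underline{p_Y}$ (as $\mathrm{id}_{Y \oplus I_X} - i_Y \circ p_Y$ factors through $I_X \in \mathcal{I}_{\mathcal{D}}$), while $\binom{f}{\alpha_X} - i_Y \circ f$ similarly factors through $I_X$. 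This exhibits $\underline{f}$ inside a distinguished triangle, completing (TR1).
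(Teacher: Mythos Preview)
Your proposal is correct and follows essentially the same approach as the paper: both use the Happel-style replacement $X\to Y\oplus I_X$, invoke Lemma~\ref{LemSPL} and Corollary~\ref{CorExtension} to obtain an extension, and then apply Proposition~\ref{PropOCT} to the same three extensions (the new one, the defining conflation for $S_X$, and the split conflation on $Y\oplus I_X$) to produce the auxiliary conflation $Y\to Z\to S_X$ forcing $Z\in\mathcal{Z}$. The only cosmetic differences are your sign convention ($\alpha_X$ versus the paper's $-\alpha_X$), your citing Proposition~\ref{PropSPL} rather than {\rm (RTR1)/(RTR2)/(LTR1)} for the identity conflation, and your obtaining $q$ from the general remark after the definition of standard triangles rather than directly from the output of Proposition~\ref{PropOCT}.
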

\begin{proof}$\ \ $
\begin{enumerate}
\item By definition, every diagram (\ref{triangle}) isomorphic to an object in $\triangle$ also belongs to $\triangle$.

\item Let $f\in\mathcal{Z}(X,Y)$ be any morphism. Take a conflation
\[ \Omega S_X\overset{\delta_X}{\longrightarrow}X\overset{\alpha_X}{\longrightarrow}I_X\overset{\beta_X}{\longrightarrow}S_X\overset{\gamma_X}{\longrightarrow}\Sigma X \]
with $I_X\in\mathcal{I}_{\mathcal{D}}$, and put $f_X=(f,-\alpha_X)$. By Corollary \ref{CorExtension}, Lemma \ref{LemSPL} and Proposition \ref{PropOCT}, $f_X\colon X\rightarrow Y\oplus I_X$ becomes an inflation.
In fact, by Corollary \ref{CorExtension} and Lemma \ref{LemSPL}, there exists an extension
\[ \Omega C_f\rightarrow X\overset{f_X}{\longrightarrow}Y\oplus I_X\overset{c_f}{\longrightarrow}C_f\overset{\ell_f}{\longrightarrow}\Sigma_X, \]
and applying Proposition \ref{PropOCT} to the following diagram $(\ref{***Diag})$ of extensions, we obtain an extension
\[ \Omega S_X\rightarrow Y\rightarrow C_f\overset{{}^{\exists}q}{\longrightarrow} S_X\rightarrow \Sigma Y, \]and thus $C_f\in\mathcal{Z}$ by the extension-closedness of $\mathcal{Z}$.
\begin{equation}
\xy
(-32,0)*+{\Omega C_f}="0";
(-16,0)*+{X}="2";
(0,0)*+{Y\oplus I_X}="4";
(16,0)*+{C_f}="6";
(32,0)*+{\Sigma X}="8";
(0,24)*+{\Sigma Y}="10";
(0,12)*+{I_X}="12";
(10,20)*+{}="13";
(16,12)*+{S_X}="14";
(-16,-12)*+{\Omega S_X}="16";
(0,-12)*+{Y}="18";
(0,-24)*+{\Omega I_X}="20";
(-10,8)*+{}="22";
(10,-8)*+{}="24";
(26,8)*+{}="26";
{\ar^{} "0";"2"};
{\ar_{f_X} "2";"4"};
{\ar_{c_f} "4";"6"};
{\ar_{\ell_f} "6";"8"};
{\ar^{} "16";"2"};
{\ar^{\alpha_X} "2";"12"};
{\ar_{\beta_X} "12";"14"};
{\ar^{\gamma_X} "14";"8"};
{\ar^{} "20";"18"};
{\ar^{i_Y} "18";"4"};
{\ar|{_{-p_{I_X}}} "4";"12"};
{\ar^{} "12";"10"};
{\ar@{-->}^{{\exists}q} "6";"14"};
{\ar@{-->}_{} "16";"18"};
{\ar@{-->}_{} "18";"6"};
{\ar@{-->}_{} "14";"10"};
{\ar@{}|\circlearrowright "6";"12"};
{\ar@{}|\circlearrowright "4";"22"};
{\ar@{}|\circlearrowright "4";"24"};
{\ar@{}|\circlearrowright "6";"26"};
{\ar@{}|\circlearrowright "12";"13"};
\endxy
\label{***Diag}
\end{equation}

Let $C(f)$ denote the image of $C_f$ in $\mathcal{Z}/\mathcal{I}_{\mathcal{D}}$.
Then the above diagram means
\[ X\overset{\underline{f_X}}{\longrightarrow}Y\oplus I_X\overset{\underline{c_f}}{\longrightarrow}C(f)\overset{\underline{q}}{\longrightarrow}SX \]
is a standard triangle.
If we put $g=c_f\circ i_Y$ where $i_Y\colon Y\hookrightarrow Y\oplus I_X$ is the inclusion, then $X\overset{\underline{f}}{\longrightarrow}Y\overset{\underline{g}}{\longrightarrow}C(f)\overset{\underline{q}}{\longrightarrow}SX$ becomes isomorphic to this standard triangle. 

\item By {\rm (RTR1)}, {\rm (RTR2)} and {\rm (LTR1)},
\[ 0=\Omega 0\rightarrow X\overset{\mathrm{id}}{\longrightarrow}X\rightarrow 0\rightarrow\Sigma X \]
is a conflation, and it immediately follows that the triangle
\[ X\overset{\mathrm{id}}{\longrightarrow}X\rightarrow 0\rightarrow SX \]
belongs to $\triangle$.

\end{enumerate}
\end{proof}

\begin{prop}\label{PropTR2}
$(\mathcal{Z}/\mathcal{I}_{\mathcal{D}},S,\triangle)$ satisfies {\rm (TR2)}.
\end{prop}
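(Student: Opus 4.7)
The plan is to reduce to standard triangles and then, for the forward implication, exhibit a conflation in $\mathcal{Z}$ whose associated standard triangle realizes the rotated sequence. Since $\triangle$ is closed under isomorphism of triangles, I may assume the given distinguished triangle $X \overset{\underline{f}}{\to} Y \overset{\underline{g}}{\to} Z \overset{\underline{q}}{\to} SX$ is itself standard, arising from a conflation
\[ \Omega Z \overset{e}{\to} X \overset{f}{\to} Y \overset{g}{\to} Z \overset{h}{\to} \Sigma X \]
together with a morphism of conflations $(\mathrm{id},p,q)$ into the chosen injective resolution $\Omega S_X \to X \overset{\alpha_X}{\to} I_X \overset{\beta_X}{\to} S_X \to \Sigma X$.

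The first step is to apply the cone construction from the proof of Proposition \ref{PropTR1}(2) to $f$, producing the inflation $f_X = (f,-\alpha_X):X \to Y \oplus I_X$ and the conflation $\Omega C_f \to X \overset{f_X}{\to} Y \oplus I_X \overset{c_f}{\to} C_f \to \Sigma X$ with $C_f \in \mathcal{Z}$. The core of the proof is then a double application of the octahedron (Proposition \ref{PropOCT}). First, feeding Proposition \ref{PropOCT} with the original conflation, the cone conflation, and the split conflation $\Omega Y \to I_X \to Y \oplus I_X \to Y \to \Sigma I_X$ from Proposition \ref{PropSPL} (under the compatibility $p_Y \circ f_X = f$), I obtain a conflation $\Omega Z \to I_X \to C_f \to Z \to \Sigma I_X$. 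Since $I_X$ is injective in $\mathcal{Z}$, the inflation $I_X \to C_f$ admits a retraction, so this conflation splits and I obtain an isomorphism $C_f \cong I_X \oplus Z$ in $\mathcal{C}$. Second, feeding Proposition \ref{PropOCT} with the injective resolution of $X$, the cone conflation, and the split conflation $\Omega I_X \to Y \to Y \oplus I_X \to I_X \to \Sigma Y$ (under the compatibility $-p_{I_X}\circ f_X = \alpha_X$), I obtain a conflation $\Omega S_X \to Y \to C_f \to S_X \to \Sigma Y$. Transporting along the isomorphism $C_f \cong I_X \oplus Z$ and passing to $\mathcal{Z}/\mathcal{I}_\mathcal{D}$ kills the summand $I_X \in \mathcal{I}_\mathcal{D}$, so the standard triangle of this conflation (using the injective resolution of $Y$) becomes precisely the rotated sequence $Y \overset{\underline{g}}{\to} Z \overset{\underline{q}}{\to} SX \overset{-S\underline{f}}{\to} SY$.

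For the converse implication, I would exploit that $S$ is an auto-equivalence of $\mathcal{Z}/\mathcal{I}_\mathcal{D}$: given a distinguished rotated triangle $Y \overset{\underline{v}}{\to} Z \overset{\underline{w}}{\to} V \overset{\underline{u}}{\to} SY$, choose $X$ and $\underline{f}:X\to Y$ with $V\cong SX$ and $\underline{u} = -S\underline{f}$, apply Proposition \ref{PropTR1}(2) to $f$, rotate by the forward direction just established, and identify the result with the given triangle via a morphism-of-conflations argument and Lemma \ref{LemForSuspension}.

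The main obstacle will be the bookkeeping of signs and the verification that the connecting map $SX \to SY$ in the assembled standard triangle is exactly $-S\underline{f}$, and not some other sign. This reduces to carefully applying the octahedron identity $(\Sigma\ell)\circ h + (\Sigma\ell')\circ h' = 0$ from Proposition \ref{PropOCT} in each of the two invocations, and to tracking how the adjunction isomorphism $\psi$ of the pseudo-triangulation transports signs between $\triangleright$ and $\triangleleft$ when the connecting morphism is computed against the injective resolution of $Y$.
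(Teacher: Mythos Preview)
Your proposal is correct and follows essentially the same route as the paper: the paper also passes to the cone conflation $X\overset{f_X}{\to}Y\oplus I_X\to C_f$ and applies Proposition~\ref{PropOCT} exactly as in your second octahedron to produce the conflation $Y\to C_f\to S_X$, then pins down the sign $-S\underline{f}$ via an explicit morphism of conflations (Claim~\ref{ClaimForTR2}). Your first octahedron, giving $C_f\cong I_X\oplus Z$, just makes explicit what the paper abbreviates as ``we may replace''; the converse direction you sketch is not treated separately in the paper, since it is automatic once $S$ is an auto-equivalence.
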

\begin{proof}
It suffices to show, for any distinguished triangle
\[ X\overset{\underline{f}}{\longrightarrow}Y\overset{\underline{g}}{\longrightarrow}Z\overset{\underline{q}}{\longrightarrow}SX \]
arising from a morphism of conflations
\[
\xy
(-24,6)*+{\Omega Z}="-2";
(-12,6)*+{X}="0";
(0,6)*+{Y}="2";
(12,6)*+{Z}="4";
(24,6)*+{\Sigma X}="6";
(-24,-6)*+{\Omega S_X}="-12";
(-12,-6)*+{X}="10";
(0,-6)*+{I_X}="12";
(12,-6)*+{S_X}="14";
(24,-6)*+{\Sigma X}="16";
{\ar^{e} "-2";"0"};
{\ar^{f} "0";"2"};
{\ar^{g} "2";"4"};
{\ar^{h} "4";"6"};
{\ar_{\delta_X} "-12";"10"};
{\ar_{\alpha_X} "10";"12"};
{\ar_{\beta_X} "12";"14"};
{\ar_{\gamma_X} "14";"16"};
{\ar_{\Omega q} "-2";"-12"};
{\ar@{=} "0";"10"};
{\ar^{p} "2";"12"};
{\ar^{q} "4";"14"};
{\ar@{=} "6";"16"};
{\ar@{}|\circlearrowright "-2";"10"};
{\ar@{}|\circlearrowright "0";"12"};
{\ar@{}|\circlearrowright "2";"14"};
{\ar@{}|\circlearrowright "4";"16"};
\endxy,
\]
the shifted triangle
\[ Y\overset{\underline{g}}{\longrightarrow}Z\overset{{}\underline{q}}{\longrightarrow}SX\overset{-S\underline{f}}{\longrightarrow}SY \]
also becomes a distinguished triangle.

We may replace $\Omega Z\overset{e}{\longrightarrow}X\overset{f}{\longrightarrow}Y\overset{g}{\longrightarrow}Z\overset{h}{\longrightarrow}\Sigma X$ by the conflation $\Omega C_f\rightarrow X\overset{f_X}{\longrightarrow}Y\oplus I_X\overset{c_f}{\longrightarrow}C_f\overset{\ell_f}{\longrightarrow}\Sigma X$ constructed in the proof of Proposition \ref{PropTR1}. Recall that $f_X=(f,-\alpha_X)=i_Y\circ f-i_{I_X}\circ\alpha_X$ where $i_Y$ and $i_{I_X}$ are the inclusions into $Y\oplus I_X$.

Take conflations
\begin{eqnarray*}
\Omega S_X\overset{\delta_X}{\longrightarrow}X\overset{\alpha_X}{\longrightarrow}I_X\overset{\beta_X}{\longrightarrow}S_X\overset{\gamma_X}{\longrightarrow}\Sigma X,\\
\Omega I_X\overset{0}{\longrightarrow}Y\overset{i_Y}{\longrightarrow}Y\oplus I_X\overset{-p_{I_X}}{\longrightarrow}I_X\overset{0}{\longrightarrow}\Sigma Y.
\end{eqnarray*}
By Proposition \ref{PropOCT}, there exists $k\in\mathcal{C}(\Omega S_X,Y)$ and $\nu\in\mathcal{Z}(C_f,S_X)$ such that
\[ \Omega S_X\overset{k}{\longrightarrow}Y\overset{\mu}{\longrightarrow}C_f\overset{\nu}{\longrightarrow}S_X\rightarrow\Sigma Y \]
is a conflation, where $\mu=c_f\circ i_Y$, and
\begin{eqnarray*}
\nu\circ c_f=-\beta_X\circ p_{I_X}&,&\gamma_X\circ\nu=\ell_f,\\
-\psi_{S_X,Y}(k)\circ\beta_X=0&,&f_X\circ\delta_X+i_Y\circ k=0.\\
\end{eqnarray*}
\[
\xy
(-32,0)*+{\Omega C_f}="0";
(-16,0)*+{X}="2";
(0,0)*+{Y\oplus I_X}="4";
(16,0)*+{C_f}="6";
(32,0)*+{\Sigma X}="8";
(0,24)*+{\Sigma Y}="10";
(0,12)*+{I_X}="12";
(10,20)*+{}="13";
(16,12)*+{S_X}="14";
(-16,-12)*+{\Omega S_X}="16";
(0,-12)*+{Y}="18";
(0,-24)*+{\Omega I_X}="20";
(-10,8)*+{}="22";
(10,-8)*+{}="24";
(26,8)*+{}="26";
{\ar^{} "0";"2"};
{\ar_{f_X} "2";"4"};
{\ar_{c_f} "4";"6"};
{\ar_{\ell_f} "6";"8"};
{\ar^{\delta_X} "16";"2"};
{\ar^{\alpha_X} "2";"12"};
{\ar_{\beta_X} "12";"14"};
{\ar^{\gamma_X} "14";"8"};
{\ar^{0} "20";"18"};
{\ar^{i_Y} "18";"4"};
{\ar|{_{-p_{I_X}}} "4";"12"};
{\ar^{0} "12";"10"};
{\ar@{-->}^{{}^\exists \nu} "6";"14"};
{\ar@{-->}_{{}^{\exists}k} "16";"18"};
{\ar@{-->}_{\mu} "18";"6"};
{\ar@{-->}_{-\psi_{S_X,Y}(k)} "14";"10"};
{\ar@{}|\circlearrowright "6";"12"};
{\ar@{}|\circlearrowright "4";"22"};
{\ar@{}|\circlearrowright "4";"24"};
{\ar@{}|\circlearrowright "6";"26"};
{\ar@{}|\circlearrowright "12";"13"};
\endxy
\]
\begin{claim}\label{ClaimForTR2}
We have a morphism of conflations
\[
\xy
(-28,7)*+{\Omega S_X}="-2";
(-14,7)*+{X}="0";
(0,7)*+{I_X}="2";
(14,7)*+{S_X}="4";
(28,7)*+{\Sigma X}="6";
(-28,-7)*+{\Omega S_X}="-12";
(-14,-7)*+{Y}="10";
(0,-7)*+{C_f}="12";
(14,-7)*+{S_X}="14";
(28,-7)*+{\Sigma Y}="16";
{\ar^{\delta_X} "-2";"0"};
{\ar^{\alpha_X} "0";"2"};
{\ar^{\beta_X} "2";"4"};
{\ar^{\gamma_X} "4";"6"};
{\ar_{k} "-12";"10"};
{\ar_{\mu} "10";"12"};
{\ar_{\nu} "12";"14"};
{\ar_{} "14";"16"};
{\ar_{-\mathrm{id}} "-2";"-12"};
{\ar^{f} "0";"10"};
{\ar|*+{_{c_f\circ i_{I_X}}} "2";"12"};
{\ar|*+{_{-\mathrm{id}}} "4";"14"};
{\ar^{\Sigma f} "6";"16"};
{\ar@{}|\circlearrowright "-2";"10"};
{\ar@{}|\circlearrowright "0";"12"};
{\ar@{}|\circlearrowright "2";"14"};
{\ar@{}|\circlearrowright "4";"16"};
\endxy
\]
\end{claim}
\begin{proof}[Proof of Claim \ref{ClaimForTR2}]
This immediately follows from
\[ f\circ\delta_X=p_Y\circ f_X\circ\delta_X=-p_Y\circ i_Y\circ k=-k, \]
\begin{eqnarray*}
c_f\circ i_{I_X}\circ\alpha_X&=&c_f\circ i_{I_X}\circ(-p_{I_X})\circ f_X\\
&=&c_f\circ(i_Y\circ p_Y\circ f_X-f_X)\\
&=&c_f\circ i_Y\circ f,
\end{eqnarray*}
\[ \nu\circ c_f\circ i_{I_X}=-\beta_X\circ p_{I_X}\circ i_{I_X}=-\beta_X. \]
\end{proof}
If we take a conflation $\Omega S_Y\overset{\delta_Y}{\longrightarrow}Y\overset{\alpha_Y}{\longrightarrow}I_Y\overset{\beta_Y}{\longrightarrow}S_Y\overset{\gamma_Y}{\longrightarrow}\Sigma Y$ where $I_Y\in\mathcal{I}_{\mathcal{D}}$, then there exist $u\in\mathcal{Z}(C_f,I_Y)$ and $v\in\mathcal{Z}(S_X,S_Y)$ such that $(\mathrm{id}_Y,p,q)$ is a morphism of conflations.
\begin{equation}
\xy
(-28,7)*+{\Omega S_X}="-2";
(-14,7)*+{Y}="0";
(0,7)*+{C_f}="2";
(14,7)*+{S_X}="4";
(28,7)*+{\Sigma Y}="6";
(-28,-7)*+{\Omega S_Y}="-12";
(-14,-7)*+{Y}="10";
(0,-7)*+{I_Y}="12";
(14,-7)*+{S_Y}="14";
(28,-7)*+{\Sigma Y}="16";
{\ar^{k} "-2";"0"};
{\ar^{\mu} "0";"2"};
{\ar^{\nu} "2";"4"};
{\ar^{} "4";"6"};
{\ar_{\delta_Y} "-12";"10"};
{\ar_{\alpha_Y} "10";"12"};
{\ar_{\beta_Y} "12";"14"};
{\ar_{\gamma_Y} "14";"16"};
{\ar_{\Omega q} "-2";"-12"};
{\ar@{=} "0";"10"};
{\ar^{u} "2";"12"};
{\ar^{v} "4";"14"};
{\ar@{=} "6";"16"};
{\ar@{}|\circlearrowright "-2";"10"};
{\ar@{}|\circlearrowright "0";"12"};
{\ar@{}|\circlearrowright "2";"14"};
{\ar@{}|\circlearrowright "4";"16"};
\endxy
\label{CompDiag}
\end{equation}
By definition, we have a standard triangle in $\triangle$
\[ Y\overset{\underline{\mu}}{\longrightarrow}C(f)\overset{\underline{\nu}}{\longrightarrow}SX\overset{\underline{v}}{\longrightarrow}SY. \]

Composing $(\ref{CompDiag})$ with the morphism obtained in Claim \ref{ClaimForTR2}, we obtain the following morphism of conflations, which means $S\underline{f}=-\underline{v}$.
\[
\xy
(-28,7)*+{\Omega S_X}="-2";
(-14,7)*+{X}="0";
(0,7)*+{I_X}="2";
(14,7)*+{S_X}="4";
(28,7)*+{\Sigma X}="6";
(-28,-7)*+{\Omega S_Y}="-12";
(-14,-7)*+{Y}="10";
(0,-7)*+{I_Y}="12";
(14,-7)*+{S_Y}="14";
(28,-7)*+{\Sigma Y}="16";
{\ar^{\delta_X} "-2";"0"};
{\ar^{\alpha_X} "0";"2"};
{\ar^{\beta_X} "2";"4"};
{\ar^{\gamma_X} "4";"6"};
{\ar_{\delta_Y} "-12";"10"};
{\ar_{\alpha_Y} "10";"12"};
{\ar_{\beta_Y} "12";"14"};
{\ar_{\gamma_Y} "14";"16"};
{\ar_{-\Omega q} "-2";"-12"};
{\ar^{f} "0";"10"};
{\ar|{_{u\circ c_f\circ i_{I_X}}} "2";"12"};
{\ar^{-v} "4";"14"};
{\ar^{\Sigma f} "6";"16"};
{\ar@{}|\circlearrowright "-2";"10"};
{\ar@{}|\circlearrowright "0";"12"};
{\ar@{}|\circlearrowright "2";"14"};
{\ar@{}|\circlearrowright "4";"16"};
\endxy
\]

\end{proof}
\begin{lem}\label{LemShift}
Let
\[ X\overset{\underline{f}}{\longrightarrow}Y\overset{\underline{g}}{\longrightarrow}Z\overset{\underline{q}}{\longrightarrow}SX \]and 
\[ X^{\prime}\overset{\underline{f^{\prime}}}{\longrightarrow}Y^{\prime}\overset{\underline{g^{\prime}}}{\longrightarrow}Z^{\prime}\overset{\underline{q^{\prime}}}{\longrightarrow}SX^{\prime} \]
be standard triangles in $\mathcal{Z}/\mathcal{I}_{\mathcal{D}}$ obtained from 
\[
\xy
(-24,6)*+{\Omega Z}="-2";
(-12,6)*+{X}="0";
(0,6)*+{Y}="2";
(12,6)*+{Z}="4";
(24,6)*+{\Sigma X}="6";
(-24,-6)*+{\Omega S_X}="-12";
(-12,-6)*+{X}="10";
(0,-6)*+{I_X}="12";
(12,-6)*+{S_X}="14";
(24,-6)*+{\Sigma X}="16";
{\ar^{e} "-2";"0"};
{\ar^{f} "0";"2"};
{\ar^{g} "2";"4"};
{\ar^{h} "4";"6"};
{\ar^{} "-12";"10"};
{\ar_{\alpha_X} "10";"12"};
{\ar_{\beta_X} "12";"14"};
{\ar_{\gamma_X} "14";"16"};
{\ar^{} "-2";"-12"};
{\ar@{=} "0";"10"};
{\ar^{p} "2";"12"};
{\ar^{q} "4";"14"};
{\ar@{=} "6";"16"};
{\ar@{}|\circlearrowright "-2";"10"};
{\ar@{}|\circlearrowright "0";"12"};
{\ar@{}|\circlearrowright "2";"14"};
{\ar@{}|\circlearrowright "4";"16"};
\endxy
\ \text{and}\ 
\xy
(-24,6)*+{\Omega Z^{\prime}}="-2";
(-12,6)*+{X^{\prime}}="0";
(0,6)*+{Y^{\prime}}="2";
(12,6)*+{Z^{\prime}}="4";
(24,6)*+{\Sigma X^{\prime}}="6";
(-24,-6)*+{\Omega S_{X^{\prime}}}="-12";
(-12,-6)*+{X^{\prime}}="10";
(0,-6)*+{I_{X^{\prime}}}="12";
(12,-6)*+{S_{X^{\prime}}}="14";
(24,-6)*+{\Sigma X^{\prime}}="16";
{\ar^{e^{\prime}} "-2";"0"};
{\ar^{f^{\prime}} "0";"2"};
{\ar^{g^{\prime}} "2";"4"};
{\ar^{h^{\prime}} "4";"6"};
{\ar^{} "-12";"10"};
{\ar_{\alpha_{X^{\prime}}} "10";"12"};
{\ar_{\beta_{X^{\prime}}} "12";"14"};
{\ar_{\gamma_{X^{\prime}}} "14";"16"};
{\ar^{} "-2";"-12"};
{\ar@{=} "0";"10"};
{\ar^{p^{\prime}} "2";"12"};
{\ar^{q^{\prime}} "4";"14"};
{\ar@{=} "6";"16"};
{\ar@{}|\circlearrowright "-2";"10"};
{\ar@{}|\circlearrowright "0";"12"};
{\ar@{}|\circlearrowright "2";"14"};
{\ar@{}|\circlearrowright "4";"16"};
\endxy
.
\]
If we are given a morphism of conflations
\[
\xy
(-40,6)*+{\Omega Z}="-2";
(-24,6)*+{X}="0";
(-8,6)*+{Y}="2";
(8,6)*+{Z}="4";
(24,6)*+{\Sigma X}="6";
(-40,-6)*+{\Omega Z^{\prime}}="-12";
(-24,-6)*+{X^{\prime}}="10";
(-8,-6)*+{Y^{\prime}}="12";
(8,-6)*+{Z^{\prime}}="14";
(24,-6)*+{\Sigma X^{\prime}}="16";
{\ar^{e} "-2";"0"};
{\ar^{f} "0";"2"};
{\ar^{g} "2";"4"};
{\ar^{h} "4";"6"};
{\ar_{e^{\prime}} "-12";"10"};
{\ar_{f^{\prime}} "10";"12"};
{\ar_{g^{\prime}} "12";"14"};
{\ar_{h^{\prime}} "14";"16"};
{\ar_{\Omega z} "-2";"-12"};
{\ar^{x} "0";"10"};
{\ar^{y} "2";"12"};
{\ar^{z} "4";"14"};
{\ar^{\Sigma x} "6";"16"};
{\ar@{}|\circlearrowright "-2";"10"};
{\ar@{}|\circlearrowright "0";"12"};
{\ar@{}|\circlearrowright "2";"14"};
{\ar@{}|\circlearrowright "4";"16"};
\endxy
,
\]
then we obtain the following morphism in $\triangle$.
\[
\xy
(-24,6)*+{X}="0";
(-8,6)*+{Y}="2";
(8,6)*+{Z}="4";
(24,6)*+{SX}="6";
(-24,-6)*+{X^{\prime}}="10";
(-8,-6)*+{Y^{\prime}}="12";
(8,-6)*+{Z^{\prime}}="14";
(24,-6)*+{SX^{\prime}}="16";
{\ar^{\underline{f}} "0";"2"};
{\ar^{\underline{g}} "2";"4"};
{\ar^{\underline{q}} "4";"6"};
{\ar_{\underline{f^{\prime}}} "10";"12"};
{\ar_{\underline{g^{\prime}}} "12";"14"};
{\ar_{\underline{q^{\prime}}} "14";"16"};
{\ar_{\underline{x}} "0";"10"};
{\ar^{\underline{y}} "2";"12"};
{\ar^{\underline{z}} "4";"14"};
{\ar^{S\underline{x}} "6";"16"};
{\ar@{}|\circlearrowright "0";"12"};
{\ar@{}|\circlearrowright "2";"14"};
{\ar@{}|\circlearrowright "4";"16"};
\endxy
\]
\end{lem}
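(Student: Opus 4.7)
The plan is to reduce the lemma to the commutativity of the rightmost square in $\mathcal{Z}/\mathcal{I}_{\mathcal{D}}$. The two left squares commute already in $\mathcal{Z}$: the identities $y \circ f = f' \circ x$ and $z \circ g = g' \circ y$ are given by the assumed morphism of conflations and descend to the stable category. The real task is to prove $S\underline{x} \circ \underline{q} = \underline{q'} \circ \underline{z}$.

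First I would lift $S\underline{x}$ to a morphism $S_x \in \mathcal{Z}(S_X, S_{X'})$ arising from a morphism of injective conflations $(x, I_x, S_x)$, which exists by the construction of $S$. It then suffices to show that $S_x \circ q - q' \circ z$ factors through an object of $\mathcal{I}_{\mathcal{D}}$, and the natural target for such a factorization is $\beta_{X'}\colon I_{X'} \to S_{X'}$.

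My strategy proceeds in two stages. At the first stage, the composites $I_x \circ p$ and $p' \circ y$ are both morphisms $Y \to I_{X'}$, and each equals $\alpha_{X'} \circ x$ after precomposition with $f$. By Proposition \ref{PropExact} applied to the extension $X \to Y \to Z \to \Sigma X$, there exists $r \in \mathcal{Z}(Z, I_{X'})$ with $r \circ g = I_x \circ p - p' \circ y$. At the second stage, a routine diagram chase using $q \circ g = \beta_X \circ p$, $q' \circ g' = \beta_{X'} \circ p'$, $S_x \circ \beta_X = \beta_{X'} \circ I_x$, $\gamma_X \circ q = h$, $h' \circ z = (\Sigma x) \circ h$, and $\gamma_{X'} \circ S_x = (\Sigma x) \circ \gamma_X$ shows that the morphism $S_x \circ q - q' \circ z - \beta_{X'} \circ r$ vanishes both after precomposition with $g$ and after postcomposition with $\gamma_{X'}$. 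Now (AC1) of Condition \ref{CondPTR}, applied to the extensions $X \to Y \to Z \to \Sigma X$ and $X' \to I_{X'} \to S_{X'} \to \Sigma X'$, yields $c \in \mathcal{Z}(Z, I_{X'})$ with $\beta_{X'} \circ c = S_x \circ q - q' \circ z - \beta_{X'} \circ r$. Hence $S_x \circ q - q' \circ z = \beta_{X'} \circ (r + c)$ factors through $I_{X'} \in \mathcal{I}_{\mathcal{D}}$, as required.

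I expect the main obstacle to be identifying the precise role of (AC1). The discrepancy $S_x \circ q - q' \circ z$ is not by itself annihilated by either $g$ or $\gamma_{X'}$, so a preliminary correction by $\beta_{X'} \circ r$ is essential. Proposition \ref{PropExact} only reduces the problem by one stage, and (AC1) is needed to absorb the residual defect; this is the kind of two-stage diagram chase that Condition \ref{CondPTR} is tailored for.
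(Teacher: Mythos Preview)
Your proposal is correct and follows essentially the same approach as the paper's proof: both reduce to the rightmost square, use exactness to find a morphism $r$ (the paper calls it $s$) with $r\circ g = I_x\circ p - p'\circ y$, verify that $S_x\circ q - q'\circ z - \beta_{X'}\circ r$ is annihilated by $g$ on the right and by $\gamma_{X'}$ on the left, and then invoke (AC1) to factor the remainder through $I_{X'}$. The only differences are notational.
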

\begin{proof}
It suffices to show $\underline{q^{\prime}}\circ\underline{z}=(S\underline{x})\circ\underline{q}$. By the definition of $S_x$, we have $(\Sigma x)\circ\gamma_X=\gamma_{X^{\prime}}\circ{S_x}$.
Since $(I_x\circ p-p^{\prime}\circ y)\circ f=I_x\circ p\circ f-p^{\prime}\circ y\circ f=I_x\circ\alpha_X-\alpha_{X^{\prime}}\circ x=0$
\[
\xy
(-24,6)*+{\Omega S_X}="-2";
(-12,6)*+{X}="0";
(0,6)*+{I_X}="2";
(12,6)*+{S_X}="4";
(24,6)*+{\Sigma X}="6";
(-24,-6)*+{\Omega S_{X^{\prime}}}="-12";
(-12,-6)*+{X^{\prime}}="10";
(0,-6)*+{I_{X^{\prime}}}="12";
(12,-6)*+{S_{X^{\prime}}}="14";
(24,-6)*+{\Sigma M}="16";
{\ar^{\alpha_X} "0";"2"};
{\ar^{\beta_X} "2";"4"};
{\ar^{\gamma_X} "4";"6"};
{\ar_{\alpha_{X^{\prime}}} "10";"12"};
{\ar_{\beta_{X^{\prime}}} "12";"14"};
{\ar_{\gamma_{X^{\prime}}} "14";"16"};
{\ar^{} "-2";"0"};
{\ar^{x} "0";"10"};
{\ar^{I_x} "2";"12"};
{\ar^{S_x} "4";"14"};
{\ar^{\Sigma x} "6";"16"};
{\ar^{} "-12";"10"};
{\ar^{} "-2";"-12"};
{\ar@{}|\circlearrowright "-2";"10"};
{\ar@{}|\circlearrowright "0";"12"};
{\ar@{}|\circlearrowright "2";"14"};
{\ar@{}|\circlearrowright "4";"16"};
\endxy
,
\]
there exists $s\in\mathcal{Z}(Z,I_{X^{\prime}})$ such that $s\circ g=I_x\circ p-p^{\prime}\circ y$.
If we put $\zeta=S_x\circ q-q^{\prime}\circ z-\beta_{X^{\prime}}\circ s$, then $\zeta$ satisfies
\begin{eqnarray*}
\gamma_{X^{\prime}}\circ\zeta&=&\gamma_{X^{\prime}}\circ S_x\circ q-\gamma_{X^{\prime}}\circ q^{\prime}\circ z-\gamma_{X^{\prime}}\circ\beta_{X^{\prime}}\circ s\\
&=&(\Sigma x)\circ\gamma_X\circ q-h^{\prime}\circ z\\
&=&(\Sigma x)\circ h-(\Sigma x)\circ h\\
&=&0
\end{eqnarray*}
and
\begin{eqnarray*}
\zeta\circ g&=&S_x\circ q\circ g-q^{\prime}\circ z\circ g-\beta_{X^{\prime}}\circ s\circ g\\
&=&S_x\circ\beta_X\circ p-q^{\prime}\circ g^{\prime}\circ y-(\beta_{X^{\prime}}\circ I_x\circ p-\beta_{X^{\prime}}\circ p^{\prime}\circ y)\\
&=&\beta_{X^{\prime}}\circ I_x\circ p-\beta_{X^{\prime}}\circ p^{\prime}\circ y-(\beta_{X^{\prime}}\circ I_x\circ p-\beta_{X^{\prime}}\circ p^{\prime}\circ y)\\
&=&0.
\end{eqnarray*}
Thus by {\rm (AC1)}, there exists $t\in\mathcal{Z}(Z,I_{X^{\prime}})$ such that $\zeta=\beta_{X^{\prime}}\circ t$, i.e.,
\[ S_x\circ q-q^{\prime}\circ z=\beta_{X^{\prime}}\circ(s+t). \]
\[
\xy
(-30,6)*+{\Omega Z}="-2";
(-18,6)*+{X}="0";
(-6,6)*+{Y}="2";
(-2,2)*+{}="3";
(6,6)*+{Z}="4";
(18,6)*+{\Sigma X}="6";
(-30,-6)*+{\Omega S_{X^{\prime}}}="-12";
(-18,-6)*+{X^{\prime}}="10";
(-6,-6)*+{I_{X^{\prime}}}="12";
(6,-6)*+{S_{X^{\prime}}}="14";
(18,-6)*+{\Sigma X}="16";
{\ar^{} "-2";"0"};
{\ar_{} "-12";"10"};
{\ar^{f} "0";"2"};
{\ar^{g} "2";"4"};
{\ar^{h} "4";"6"};
{\ar_{\alpha_{X^{\prime}}} "10";"12"};
{\ar_{\beta_{X^{\prime}}} "12";"14"};
{\ar_{\gamma_{X^{\prime}}} "14";"16"};
{\ar@{-->}_{{}^{\exists}t} "4";"12"};
{\ar^{\zeta} "4";"14"};
{\ar^{0} "6";"16"};
{\ar@{}|\circlearrowright "3";"14"};
{\ar@{}|\circlearrowright "4";"16"};
\endxy
\]\end{proof}

\begin{prop}\label{PropTR3}
$(\mathcal{Z}/\mathcal{I}_{\mathcal{D}},S,\triangle)$ satisfies {\rm (TR3)}.
\end{prop}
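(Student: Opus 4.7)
The plan is to reduce to the standard case, rectify the given commutative square so that it commutes strictly in $\mathcal{Z}$, apply {\rm (RTR3)} to obtain the completing morphism in $\mathcal{C}$, and then invoke Lemma \ref{LemShift} to pass to $\mathcal{Z}/\mathcal{I}_{\mathcal{D}}$. Since the class $\triangle$ is closed under isomorphism of triangles and the conclusion of {\rm (TR3)} is itself invariant under such isomorphisms, we may assume both distinguished triangles are standard, arising from conflations
\[ \Omega Z\overset{e}{\longrightarrow}X\overset{f}{\longrightarrow}Y\overset{g}{\longrightarrow}Z\overset{h}{\longrightarrow}\Sigma X \]
and its primed analogue, together with the chosen conflations through injectives $I_X, I_{X^{\prime}}\in\mathcal{I}_{\mathcal{D}}$ that produced $\underline{q}$ and $\underline{q^{\prime}}$.

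Lift the given $\underline{x},\underline{y}$ to morphisms $x\in\mathcal{Z}(X,X^{\prime})$ and $y\in\mathcal{Z}(Y,Y^{\prime})$. The hypothesis $\underline{y}\circ\underline{f}=\underline{f^{\prime}}\circ\underline{x}$ means that $y\circ f-f^{\prime}\circ x$ factors as $\beta\circ\alpha$ through some $I\in\mathcal{I}_{\mathcal{D}}$, with $\alpha\in\mathcal{Z}(X,I)$ and $\beta\in\mathcal{Z}(I,Y^{\prime})$. Since $f$ is an inflation and $I$ is injective, there exists $\alpha^{\prime}\in\mathcal{Z}(Y,I)$ with $\alpha^{\prime}\circ f=\alpha$, and replacing $y$ by $y^{\prime}:=y-\beta\circ\alpha^{\prime}$ preserves the residue class $\underline{y^{\prime}}=\underline{y}$ in $\mathcal{Z}/\mathcal{I}_{\mathcal{D}}$ while now $y^{\prime}\circ f=f^{\prime}\circ x$ on the nose in $\mathcal{Z}$. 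This rectification step is the one I expect to be the main obstacle, as it is the only place where injectivity of $\mathcal{I}_{\mathcal{D}}$ enters essentially; once the square commutes strictly, the remainder is essentially formal.

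With strict commutativity in hand, {\rm (RTR3)} applied to the right-triangulation parts of the two conflations yields $z\in\mathcal{Z}(Z,Z^{\prime})$ such that $(x,y^{\prime},z)$ is a morphism in $\triangleright$. The naturality of the adjoint isomorphism $\psi$ together with the defining equalities $h=-\psi(e)$ and $h^{\prime}=-\psi(e^{\prime})$ translates $(\Sigma x)\circ h=h^{\prime}\circ z$ into $x\circ e=e^{\prime}\circ\Omega z$, so $(x,y^{\prime},z)$ is simultaneously a morphism in $\triangleleft$, hence a morphism of conflations. Lemma \ref{LemShift} then delivers $\underline{q^{\prime}}\circ\underline{z}=(S\underline{x})\circ\underline{q}$, so $(\underline{x},\underline{y},\underline{z})$ is the desired morphism of distinguished triangles, proving {\rm (TR3)}.
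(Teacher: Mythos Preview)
Your proof is correct and matches the paper's argument essentially step for step: reduce to standard triangles, use injectivity of $I\in\mathcal{I}_{\mathcal{D}}$ to rectify the square $y\circ f - f^{\prime}\circ x$ to a strict equality, apply {\rm (RTR3)} to obtain $z$, and finish with Lemma \ref{LemShift}. Your added remark that the $\triangleright$-morphism is automatically a morphism of conflations (via naturality of $\psi$) makes explicit what the paper uses silently, since this equivalence is already noted in Definition \ref{DefExtension}.
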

\begin{proof}
Suppose we are given distinguished triangles
\begin{eqnarray*}
&X\overset{\underline{f}}{\longrightarrow}Y\overset{\underline{g}}{\longrightarrow}Z\overset{\underline{q}}{\longrightarrow}SX&\\
&X^{\prime}\overset{\underline{f^{\prime}}}{\longrightarrow}Y^{\prime}\overset{\underline{g^{\prime}}}{\longrightarrow}Z^{\prime}\overset{\underline{q^{\prime}}}{\longrightarrow}SX^{\prime}&
\end{eqnarray*}
and morphisms $x\in\mathcal{Z}(X,X^{\prime})$ and $y\in\mathcal{Z}(Y,Y^{\prime})$ satisfying $\underline{y}\circ\underline{f}=\underline{f^{\prime}}\circ\underline{x}$. We want to find $z\in\mathcal{Z}(Z,Z^{\prime})$ which satisfies $\underline{z}\circ\underline{g}=\underline{g^{\prime}}\circ\underline{y}$ and $S\underline{x}\circ\underline{q}=\underline{q^{\prime}}\circ\underline{z}$.

We may assume these triangles are standard, arising from morphisms of conflations:
\[
\xy
(-40,6)*+{\Omega Z}="-2";
(-24,6)*+{X}="0";
(-8,6)*+{Y}="2";
(8,6)*+{Z}="4";
(24,6)*+{\Sigma X}="6";
(-40,-6)*+{\Omega S_X}="-12";
(-24,-6)*+{X}="10";
(-8,-6)*+{I_X}="12";
(8,-6)*+{S_X}="14";
(24,-6)*+{\Sigma X}="16";
{\ar^{} "-2";"0"};
{\ar_{} "-12";"10"};
{\ar^{} "-2";"-12"};
{\ar^{f} "0";"2"};
{\ar^{g} "2";"4"};
{\ar^{h} "4";"6"};
{\ar_{\alpha_X} "10";"12"};
{\ar_{\beta_X} "12";"14"};
{\ar_{\gamma_X} "14";"16"};
{\ar@{=} "0";"10"};
{\ar^{p} "2";"12"};
{\ar^{q} "4";"14"};
{\ar@{=} "6";"16"};
{\ar@{}|\circlearrowright "0";"12"};
{\ar@{}|\circlearrowright "2";"14"};
{\ar@{}|\circlearrowright "4";"16"};
{\ar@{}|\circlearrowright "-2";"10"};
\endxy
\]
\[
\xy
(-40,6)*+{\Omega Z^{\prime}}="-2";
(-24,6)*+{X^{\prime}}="0";
(-8,6)*+{Y^{\prime}}="2";
(8,6)*+{Z^{\prime}}="4";
(24,6)*+{\Sigma X^{\prime}}="6";
(-40,-6)*+{\Omega S_{X^{\prime}}}="-12";
(-24,-6)*+{X^{\prime}}="10";
(-8,-6)*+{I_{X^{\prime}}}="12";
(8,-6)*+{S_{X^{\prime}}}="14";
(24,-6)*+{\Sigma X^{\prime}}="16";
{\ar^{f^{\prime}} "0";"2"};
{\ar^{g^{\prime}} "2";"4"};
{\ar^{h^{\prime}} "4";"6"};
{\ar_{\alpha_{X^{\prime}}} "10";"12"};
{\ar_{\beta_{X^{\prime}}} "12";"14"};
{\ar_{\gamma_{X^{\prime}}} "14";"16"};
{\ar^{} "-2";"0"};
{\ar_{} "-12";"10"};
{\ar^{} "-2";"-12"};
{\ar@{=} "0";"10"};
{\ar^{p^{\prime}} "2";"12"};
{\ar^{q^{\prime}} "4";"14"};
{\ar@{=} "6";"16"};
{\ar@{}|\circlearrowright "0";"12"};
{\ar@{}|\circlearrowright "2";"14"};
{\ar@{}|\circlearrowright "4";"16"};
{\ar@{}|\circlearrowright "-2";"10"};
\endxy
\]
Since $\underline{y}\circ\underline{f}=\underline{f^{\prime}}\circ\underline{x}$, there exist $I\in\mathcal{I}_{\mathcal{D}}$, $s_1\in\mathcal{Z}(X,I)$ and $s_2\in\mathcal{Z}(I,Y^{\prime})$ such that $s_2\circ s_1=y\circ f-f^{\prime}\circ x$. By the injectivity of $I$, there exists $s_3\in\mathcal{Z}(Y,I)$ such that $s_3\circ f=s_1$. Then we have $(y-s_2\circ s_3)\circ f=f^{\prime}\circ x$, and there exists $z\in\mathcal{Z}(Z,Z^{\prime})$ such that $z\circ g=g^{\prime}\circ(y-s_2\circ s_3)$ and $(\Sigma x)\circ h=h^{\prime}\circ z$ by {\rm (RTR3)}. Thus Proposition \ref{PropTR3} follows from Lemma \ref{LemShift}.
\[
\xy
(-40,6)*+{\Omega Z}="-2";
(-24,6)*+{X}="0";
(-8,6)*+{Y}="2";
(8,6)*+{Z}="4";
(24,6)*+{\Sigma X}="6";
(-40,-6)*+{\Omega S_X}="-12";
(-24,-6)*+{X^{\prime}}="10";
(-8,-6)*+{Y^{\prime}}="12";
(8,-6)*+{Z^{\prime}}="14";
(24,-6)*+{\Sigma X^{\prime}}="16";
{\ar^{} "-2";"0"};
{\ar_{} "-12";"10"};
{\ar@{-->}_{\Omega z} "-2";"-12"};
{\ar^{f} "0";"2"};
{\ar^{g} "2";"4"};
{\ar^{h} "4";"6"};
{\ar_{f^{\prime}} "10";"12"};
{\ar_{g^{\prime}} "12";"14"};
{\ar_{h^{\prime}} "14";"16"};
{\ar_{x} "0";"10"};
{\ar|*+{_{y-s_2\circ s_3}} "2";"12"};
{\ar@{-->}^{z} "4";"14"};
{\ar^{\Sigma x} "6";"16"};
{\ar@{}|\circlearrowright "0";"12"};
{\ar@{}|\circlearrowright "2";"14"};
{\ar@{}|\circlearrowright "4";"16"};
{\ar@{}|\circlearrowright "-2";"10"};
\endxy
\]
\end{proof}

\begin{prop}\label{PropTR4}
$(\mathcal{Z}/\mathcal{I}_{\mathcal{D}},S,\triangle)$ satisfies {\rm (TR4)}.
\end{prop}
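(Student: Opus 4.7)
The plan is to reduce to the case where the two composable morphisms are honest inflations, apply Proposition \ref{PropOCT} to the resulting three compatible conflations, and then transport the octahedral structure to $\mathcal{Z}/\mathcal{I}_{\mathcal{D}}$ using Lemma \ref{LemShift}.

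Given distinguished triangles realising a composition $X\xrightarrow{\underline{u}}Y\xrightarrow{\underline{v}}Z$ in $\mathcal{Z}/\mathcal{I}_{\mathcal{D}}$, I would first invoke the mapping-cone construction used in Proposition \ref{PropTR1}: by replacing $u$ with $u_X=(u,-\alpha_X)\colon X\to Y\oplus I_X$ (and similarly for $v$ and $vu$), we may assume that the three standard triangles arise from actual conflations
\begin{eqnarray*}
\Omega Z'\to X\xrightarrow{u}Y\xrightarrow{i}Z'\to\Sigma X,\\
\Omega X'\to Y\xrightarrow{v}Z\xrightarrow{j}X'\to\Sigma Y,\\
\Omega Y'\to X\xrightarrow{vu}Z\xrightarrow{k}Y'\to\Sigma X
\end{eqnarray*}
in $\mathcal{C}$, with $X,Y,Z,Z',X',Y'\in\mathcal{Z}$. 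Since passing to the stable category identifies $Y\oplus I_X$ with $Y$, etc., no information is lost.

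Next, I would apply Proposition \ref{PropOCT} to these three extensions, with the composition $X\to Y\to Z$ identified with $vu$. This yields morphisms $\varphi\in\mathcal{C}(Z',Y')$ and $\psi\in\mathcal{C}(Y',X')$ together with a fourth extension
\[ \Omega X'\to Z'\xrightarrow{\varphi}Y'\xrightarrow{\psi}X'\to\Sigma Z' \]
satisfying compatibility identities of the form $\varphi\circ i=k$, $j=\psi\circ k$, and a sign relation coupling the connecting morphisms. Because $Z',Y',X'$ all lie in the extension-closed subcategory $\mathcal{Z}$, this extension is in fact a conflation, hence gives rise to a standard triangle
\[ Z'\xrightarrow{\underline{\varphi}}Y'\xrightarrow{\underline{\psi}}X'\to SZ' \]
in $\mathcal{Z}/\mathcal{I}_{\mathcal{D}}$ (the connecting map being identified using Lemma \ref{LemShift}). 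Together with the three given triangles, this is the octahedron required by (TR4).

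The main technical obstacle will be verifying that the connecting morphism $X'\to SZ'$ provided by the construction coincides, modulo $\mathcal{I}_{\mathcal{D}}$, with the composite $S\underline{i}\circ\underline{q'}$ required by (TR4), and more generally matching up the squares involving $\underline{q},\underline{q'},\underline{q''}$ with the identities produced by Proposition \ref{PropOCT}. The equalities from Proposition \ref{PropOCT} live in $\mathcal{C}$ and do not explicitly mention the auxiliary conflations $X\to I_X\to S_X$ used to define the suspension $S$, so one must realise the morphism $X'\to SZ'$ via the injective conflation for $Z'$ (using that $\alpha_{Z'}$ is an inflation into $I_{Z'}\in\mathcal{I}_{\mathcal{D}}$), apply Lemma \ref{LemShift} to move between different choices of morphism of conflations, and absorb any discrepancies into maps factoring through injectives. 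Once this bookkeeping is done, the required octahedron commutes in $\mathcal{Z}/\mathcal{I}_{\mathcal{D}}$ and the proposition follows.
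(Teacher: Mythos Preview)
Your overall strategy matches the paper's: reduce to conflations in $\mathcal{C}$, apply Proposition~\ref{PropOCT}, and use Lemma~\ref{LemShift} to transport commutativities to the stable category. Two points, however, are underdeveloped and constitute the real content of the proof.

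First, the reduction step. You assert that after replacing $u$ by $(u,-\alpha_X)$ etc.\ the three triangles arise from conflations with the composition holding \emph{on the nose} in $\mathcal{C}$. But a priori you only know $\underline{v}\circ\underline{u}=\underline{vu}$ in $\mathcal{Z}/\mathcal{I}_{\mathcal{D}}$, i.e.\ $v\circ u - (vu)$ factors through an injective. The paper handles this carefully (its Claim~\ref{ClaimBefLas}): one must enlarge $M$ to $M\oplus I$ and adjust the morphisms so that the factorisation becomes literal, checking via Proposition~\ref{PropOCT} that the modified sequences are still conflations and give isomorphic triangles. This is not automatic from the mapping-cone trick alone.

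Second, and more seriously, you identify the verification of the last square (in the paper's notation, $S\underline{\ell'}\circ\underline{q'}+S\underline{\ell}\circ\underline{q}=0$) as ``bookkeeping'' to be absorbed into injectives, but this is exactly where the substance lies. Proposition~\ref{PropOCT} gives $(\Sigma\ell)\circ h+(\Sigma\ell')\circ h'=0$ in $\mathcal{C}$, but the maps $\underline{q},\underline{q'}$ to $SX,SX'$ are defined via the auxiliary injective conflations, not via $h,h'$. The paper bridges this by constructing morphisms of conflations into the conflation for $M$ (not for $X$ or $X'$) with a compatibility $r\circ m'+r'\circ m=\alpha_M$, identifies the resulting $s,s'$ with $S_\ell\circ q$ and $S_{\ell'}\circ q'$ via Lemma~\ref{LemForSuspension}, and then shows $s+s'$ factors through $I_M$ by an explicit application of condition~(AC1) together with a further projective-resolution trick for $Z$. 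Lemma~\ref{LemShift} alone does not deliver this; one genuinely needs (AC1) here, and your sketch does not invoke it. Without this argument the octahedron is not shown to close.
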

\begin{proof}
Let
\begin{eqnarray}
X\overset{\underline{\ell}}{\longrightarrow}M\overset{\underline{m}}{\longrightarrow}Y^{\prime}\overset{\underline{v}}{\longrightarrow}SX \label{triangle2}\\
X^{\prime}\overset{\underline{\ell^{\prime}}}{\longrightarrow}M\overset{\underline{m^{\prime}}}{\longrightarrow}Y\overset{\underline{v^{\prime}}}{\longrightarrow}SX^{\prime} \label{triangle3}\\
X\overset{\underline{f}}{\longrightarrow}Y\overset{\underline{g}}{\longrightarrow}Z\overset{\underline{q}}{\longrightarrow}SX, \label{triangle1}
\end{eqnarray}
be distinguished triangles in $\mathcal{Z}/\mathcal{I}_{\mathcal{D}}$ satisfying $\underline{m^{\prime}}\circ\underline{\ell}=\underline{f}$.
It suffices to show there exist $g^{\prime}\in\mathcal{Z}(Y^{\prime},Z)$ and $q^{\prime}\in\mathcal{Z}(Z,S_{X^{\prime}})$ such that
\[ X^\prime\overset{\underline{f^{\prime}}}{\longrightarrow}Y^\prime\overset{\underline{g^{\prime}}}{\longrightarrow}Z\overset{\underline{q^{\prime}}}{\longrightarrow}SX^\prime \]
is a standard triangle, where $f^{\prime}=m\circ\ell^{\prime}$, and satisfy
\begin{eqnarray*}
\underline{g^{\prime}}\circ\underline{m}=\underline{g}\circ\underline{m^{\prime}}&,&\underline{q^{\prime}}\circ\underline{g}=\underline{v^{\prime}},\\
\underline{q}\circ\underline{g^{\prime}}=\underline{v}&,&S\underline{\ell^{\prime}}\circ\underline{q^{\prime}}+S\underline{\ell}\circ\underline{q}=0.
\end{eqnarray*}

\[
\xy
(-30,10)*+{X}="2";
(-20,0)*+{M}="4";
(-10,-10)*+{Y^{\prime}}="6";
(10,-10)*+{SX}="8";
(10,10)*+{SX^{\prime}}="10";
(-10,10)*+{Y}="12";
(0,0)*+{Z}="14";
(-30,-10)*+{X^{\prime}}="18";
(20,0)*+{SM}="20";
(-20,14)*+{}="22";
(-20,-14)*+{}="24";
(0,14)*+{}="26";
(0,-14)*+{}="28";
{\ar_{\underline{\ell}} "2";"4"};
{\ar^{\underline{m}} "4";"6"};
{\ar_{\underline{v}} "6";"8"};
{\ar^{\underline{f}} "2";"12"};
{\ar_{\underline{g}} "12";"14"};
{\ar^{\underline{q}} "14";"8"};
{\ar^{\underline{\ell^{\prime}}} "18";"4"};
{\ar_{\underline{m^{\prime}}} "4";"12"};
{\ar^{\underline{v^{\prime}}} "12";"10"};
{\ar^{S\underline{\ell^{\prime}}} "10";"20"};
{\ar_{-S\underline{\ell}} "8";"20"};
{\ar@{-->}^{\underline{g^{\prime}}} "6";"14"};
{\ar_{\underline{f^{\prime}}} "18";"6"};
{\ar@{-->}_{\underline{q^{\prime}}} "14";"10"};
{\ar@{}|\circlearrowright "6";"12"};
{\ar@{}|\circlearrowright "4";"22"};
{\ar@{}|\circlearrowright "4";"24"};
{\ar@{}|\circlearrowright "14";"26"};
{\ar@{}|\circlearrowright "14";"28"};
{\ar@{}|\circlearrowright "8";"10"};
\endxy
\]

We may assume {\rm (\ref{triangle2}), (\ref{triangle3}), (\ref{triangle1})} are standard triangles, arising from the following morphisms of conflations.
\[
\xy
(-24,6)*+{\Omega Y^{\prime}}="-2";
(-12,6)*+{X}="0";
(0,6)*+{M}="2";
(12,6)*+{Y^{\prime}}="4";
(24,6)*+{\Sigma X}="6";
(-12,-6)*+{X}="10";
(0,-6)*+{I_X}="12";
(12,-6)*+{S_X}="14";
(24,-6)*+{\Sigma X}="16";
{\ar^{} "-2";"0"};
{\ar^{\ell} "0";"2"};
{\ar^{m} "2";"4"};
{\ar^{n} "4";"6"};
(-24,-6)*+{\Omega S_X}="-12";
{\ar_{\alpha_X} "10";"12"};
{\ar_{\beta_X} "12";"14"};
{\ar_{\gamma_X} "14";"16"};
{\ar@{=} "0";"10"};
{\ar^{u} "2";"12"};
{\ar^{v} "4";"14"};
{\ar@{=} "6";"16"};
{\ar^{} "-12";"10"};
{\ar^{} "-2";"-12"};
{\ar@{}|\circlearrowright "-2";"10"};
{\ar@{}|\circlearrowright "0";"12"};
{\ar@{}|\circlearrowright "2";"14"};
{\ar@{}|\circlearrowright "4";"16"};
\endxy
,
\xy
(-24,6)*+{\Omega Y}="-2";
(-12,6)*+{X^{\prime}}="0";
(0,6)*+{M}="2";
(12,6)*+{Y}="4";
(24,6)*+{\Sigma X^{\prime}}="6";
(-12,-6)*+{X^{\prime}}="10";
(-24,-6)*+{\Omega S_{X^{\prime}}}="-12";
(0,-6)*+{I_{X^{\prime}}}="12";
(12,-6)*+{S_{X^{\prime}}}="14";
(24,-6)*+{\Sigma X^{\prime}}="16";
{\ar^{\ell^{\prime}} "0";"2"};
{\ar^{m^{\prime}} "2";"4"};
{\ar^{n^{\prime}} "4";"6"};
{\ar_{\alpha_{X^{\prime}}} "10";"12"};
{\ar_{\beta_{X^{\prime}}} "12";"14"};
{\ar_{\gamma_{X^{\prime}}} "14";"16"};
{\ar^{} "-2";"0"};
{\ar@{=} "0";"10"};
{\ar^{u^{\prime}} "2";"12"};
{\ar^{v^{\prime}} "4";"14"};
{\ar@{=} "6";"16"};
{\ar^{} "-12";"10"};
{\ar^{} "-2";"-12"};
{\ar@{}|\circlearrowright "-2";"10"};
{\ar@{}|\circlearrowright "0";"12"};
{\ar@{}|\circlearrowright "2";"14"};
{\ar@{}|\circlearrowright "4";"16"};
\endxy
\]
\begin{equation}
\xy
(-24,6)*+{\Omega Z}="-2";
(-12,6)*+{X}="0";
(0,6)*+{Y}="2";
(12,6)*+{Z}="4";
(24,6)*+{\Sigma X}="6";
(-24,-6)*+{\Omega S_X}="-12";
(-12,-6)*+{X}="10";
(0,-6)*+{I_X}="12";
(12,-6)*+{S_X}="14";
(24,-6)*+{\Sigma X}="16";
{\ar^{f} "0";"2"};
{\ar^{g} "2";"4"};
{\ar^{h} "4";"6"};
{\ar_{\alpha_X} "10";"12"};
{\ar_{\beta_X} "12";"14"};
{\ar_{\gamma_X} "14";"16"};
{\ar^{} "-2";"0"};
{\ar@{=} "0";"10"};
{\ar^{p} "2";"12"};
{\ar^{q} "4";"14"};
{\ar@{=} "6";"16"};
{\ar^{} "-12";"10"};
{\ar^{} "-2";"-12"};
{\ar@{}|\circlearrowright "-2";"10"};
{\ar@{}|\circlearrowright "0";"12"};
{\ar@{}|\circlearrowright "2";"14"};
{\ar@{}|\circlearrowright "4";"16"};
\endxy
\label{ConfDiagLast}
\end{equation}

\begin{claim}\label{ClaimBefLas}
We may assume $m^{\prime}\circ\ell=f$.
\end{claim}
\begin{proof}[Proof of Claim \ref{ClaimBefLas}]
Since $\underline{m^{\prime}}\circ\underline{\ell}=\underline{f}$, there exist $I\in\mathcal{I}_{\mathcal{D}}$, $f_1\in\mathcal{Z}(X,I)$ and $f_2\in\mathcal{Z}(I,Y)$ such that $f_2\circ f_1=f-m^{\prime}\circ\ell$.
Let $i_M\colon M\rightarrow M\oplus I$ and $p_M\colon M\oplus I\rightarrow M$ be the inclusion and the projection, respectively.
By Corollary \ref{CorExtension} and Lemma \ref{LemSPL}, we have extensions
\begin{eqnarray*}
\Omega Q\rightarrow X\overset{(\ell,f_1)}{\longrightarrow}M\oplus I\rightarrow Q\rightarrow \Sigma X,\\
\Omega M\rightarrow I\rightarrow M\oplus I\overset{p_M}{\longrightarrow}M\rightarrow \Sigma I.
\end{eqnarray*}
By Proposition \ref{PropOCT}, we obtain the following morphisms of extensions by Lemma \ref{LemShift}.
\[
\xy
(-32,0)*+{\Omega Q}="0";
(-16,0)*+{X}="2";
(0,0)*+{M\oplus I}="4";
(16,0)*+{Q}="6";
(32,0)*+{\Sigma X}="8";
(0,24)*+{\Sigma I}="10";
(0,12)*+{M}="12";
(10,20)*+{}="13";
(16,12)*+{Y^{\prime}}="14";
(-16,-12)*+{\Omega Y^{\prime}}="16";
(0,-12)*+{I}="18";
(0,-24)*+{\Omega M}="20";
(-10,8)*+{}="22";
(10,-8)*+{}="24";
(26,8)*+{}="26";
{\ar^{} "0";"2"};
{\ar_{(\ell,f_1)} "2";"4"};
{\ar_{} "4";"6"};
{\ar_{} "6";"8"};
{\ar^{} "16";"2"};
{\ar^{\ell} "2";"12"};
{\ar_{m} "12";"14"};
{\ar^{n} "14";"8"};
{\ar^{} "20";"18"};
{\ar^{} "18";"4"};
{\ar|*+{_{p_M}} "4";"12"};
{\ar^{} "12";"10"};
{\ar@{-->}^{{}^{\exists}\rho} "6";"14"};
{\ar@{-->}_{} "16";"18"};
{\ar@{-->}_{} "18";"6"};
{\ar@{-->}_{} "14";"10"};
{\ar@{}|\circlearrowright "6";"12"};
{\ar@{}|\circlearrowright "4";"22"};
{\ar@{}|\circlearrowright "4";"24"};
{\ar@{}|\circlearrowright "6";"26"};
{\ar@{}|\circlearrowright "12";"13"};
\endxy
\]
Thus we have $Q\in\mathcal{Z}$, and obtain an isomorphism of distinguished triangles:
\[
\xy
(-26,6)*+{X}="0";
(-8,6)*+{M\oplus I}="2";
(8,6)*+{Q}="4";
(24,6)*+{SX}="6";
(-26,-6)*+{X}="10";
(-8,-6)*+{M}="12";
(8,-6)*+{Y^{\prime}}="14";
(24,-6)*+{SX}="16";
{\ar^{\underline{(\ell,f_1)}} "0";"2"};
{\ar^{} "2";"4"};
{\ar^{} "4";"6"};
{\ar_{\underline{\ell}} "10";"12"};
{\ar_{\underline{m}} "12";"14"};
{\ar_{} "14";"16"};
{\ar@{=} "0";"10"};
{\ar^{\underline{p_M}}_{\cong} "2";"12"};
{\ar^{\underline{\rho}}_{\cong} "4";"14"};
{\ar@{=} "6";"16"};
{\ar@{}|\circlearrowright "0";"12"};
{\ar@{}|\circlearrowright "2";"14"};
{\ar@{}|\circlearrowright "4";"16"};
\endxy
\]
Dually, there exist morphisms of extensions
\[
\xy
(-32,0)*+{\Sigma R}="0";
(-16,0)*+{Y}="2";
(0,0)*+{M\oplus I}="4";
(16,0)*+{{}^{\exists}R}="6";
(32,0)*+{\Omega X^{\prime}}="8";
(0,24)*+{\Omega I}="10";
(0,12)*+{M}="12";
(10,20)*+{}="13";
(16,12)*+{X^{\prime}}="14";
(-16,-12)*+{\Sigma X^{\prime}}="16";
(0,-12)*+{I}="18";
(0,-24)*+{\Sigma M}="20";
(-10,8)*+{}="22";
(10,-8)*+{}="24";
(26,8)*+{}="26";
{\ar^{} "2";"0"};
{\ar^{m^{\prime}+f_2} "4";"2"};
{\ar_{} "6";"4"};
{\ar_{} "8";"6"};
{\ar^{n^{\prime}} "2";"16"};
{\ar_{m^{\prime}} "12";"2"};
{\ar_{\ell^{\prime}} "14";"12"};
{\ar^{} "8";"14"};
{\ar^{} "18";"20"};
{\ar^{} "4";"18"};
{\ar|*+{_{i_M}} "12";"4"};
{\ar^{} "10";"12"};
{\ar@{-->}_{{}^{\exists}\omega} "14";"6"};
{\ar@{-->}_{} "18";"16"};
{\ar@{-->}_{} "6";"18"};
{\ar@{-->}_{} "10";"14"};
{\ar@{}|\circlearrowright "6";"12"};
{\ar@{}|\circlearrowright "4";"22"};
{\ar@{}|\circlearrowright "4";"24"};
{\ar@{}|\circlearrowright "6";"26"};
{\ar@{}|\circlearrowright "12";"13"};
\endxy
\]
which implies $R\in\mathcal{Z}$ and yields an isomorphism of distinguished triangles
\[
\xy
(-24,6)*+{X^{\prime}}="0";
(-9,6)*+{M}="2";
(9,6)*+{Y}="4";
(24,6)*+{SX^{\prime}}="6";
(-24,-6)*+{R}="10";
(-9,-6)*+{M\oplus I}="12";
(9,-6)*+{Y}="14";
(24,-6)*+{SR}="16";
{\ar^{\underline{\ell^{\prime}}} "0";"2"};
{\ar^{\underline{m^{\prime}}} "2";"4"};
{\ar^{} "4";"6"};
{\ar_{} "10";"12"};
{\ar_{\underline{m^{\prime}+f_2}} "12";"14"};
{\ar_{} "14";"16"};
{\ar^{\underline{\omega}}_{\cong} "0";"10"};
{\ar_{\cong}^{\underline{i_M}} "2";"12"};
{\ar@{=} "4";"14"};
{\ar^{S\underline{\omega}}_{\cong} "6";"16"};
{\ar@{}|\circlearrowright "0";"12"};
{\ar@{}|\circlearrowright "2";"14"};
{\ar@{}|\circlearrowright "4";"16"};
\endxy
\]
Thus, replacing $\ell$ by $(\ell,f_1)$ and $m^{\prime}$ by $m^{\prime}+f_2$, we may assume $m^{\prime}\circ\ell=f$.
\end{proof}

By Claim \ref{ClaimBefLas}, assume $m^{\prime}\circ\ell=f$.
Then by Proposition \ref{PropOCT}, there exist $g^{\prime}\in\mathcal{Z}(Y^{\prime},Z)$ and $h^{\prime}\in\mathcal{C}(Z,\Sigma X^{\prime})$ such that
\[ \Omega Z\rightarrow X^{\prime}\overset{f^{\prime}}{\longrightarrow}Y^{\prime}\overset{g^{\prime}}{\longrightarrow}Z\overset{h^{\prime}}{\longrightarrow}\Sigma X^{\prime} \]
is a conflation, and make the following diagram commutative.
\[
\xy
(-30,10)*+{X}="2";
(-20,0)*+{M}="4";
(-10,-10)*+{Y^{\prime}}="6";
(10,-10)*+{\Sigma X}="8";
(10,10)*+{\Sigma X^{\prime}}="10";
(-10,10)*+{Y}="12";
(0,0)*+{Z}="14";
(-30,-10)*+{X^{\prime}}="18";
(20,0)*+{\Sigma M}="20";
(-20,14)*+{}="22";
(-20,-14)*+{}="24";
(0,14)*+{}="26";
(0,-14)*+{}="28";
{\ar_{\ell} "2";"4"};
{\ar^{m} "4";"6"};
{\ar_{n} "6";"8"};
{\ar^{f} "2";"12"};
{\ar_{g} "12";"14"};
{\ar^{h} "14";"8"};
{\ar^{\ell^{\prime}} "18";"4"};
{\ar_{m^{\prime}} "4";"12"};
{\ar^{n^{\prime}} "12";"10"};
{\ar^{\Sigma\ell^{\prime}} "10";"20"};
{\ar_{-\Sigma\ell} "8";"20"};
{\ar@{-->}^{g^{\prime}} "6";"14"};
{\ar_{f^{\prime}} "18";"6"};
{\ar@{-->}_{h^{\prime}} "14";"10"};
{\ar@{}|\circlearrowright "6";"12"};
{\ar@{}|\circlearrowright "4";"22"};
{\ar@{}|\circlearrowright "4";"24"};
{\ar@{}|\circlearrowright "14";"26"};
{\ar@{}|\circlearrowright "14";"28"};
{\ar@{}|\circlearrowright "8";"10"};
\endxy
\]
If we take a morphism of conflations
\[
\xy
(-24,6)*+{\Omega Z}="-2";
(-12,6)*+{X^{\prime}}="0";
(0,6)*+{Y^{\prime}}="2";
(12,6)*+{Z}="4";
(24,6)*+{\Sigma X^{\prime}}="6";
(-24,-6)*+{\Omega S_{X^{\prime}}}="-12";
(-12,-6)*+{X^{\prime}}="10";
(0,-6)*+{I_{X^{\prime}}}="12";
(12,-6)*+{S_{X^{\prime}}}="14";
(24,-6)*+{\Sigma X^{\prime}}="16";
{\ar^{f^{\prime}} "0";"2"};
{\ar^{g^{\prime}} "2";"4"};
{\ar^{h^{\prime}} "4";"6"};
{\ar_{\alpha_{X^{\prime}}} "10";"12"};
{\ar_{\beta_{X^{\prime}}} "12";"14"};
{\ar_{\gamma_{X^{\prime}}} "14";"16"};
{\ar^{} "-2";"0"};
{\ar@{=} "0";"10"};
{\ar^{p^{\prime}} "2";"12"};
{\ar^{q^{\prime}} "4";"14"};
{\ar@{=} "6";"16"};
{\ar^{} "-12";"10"};
{\ar^{} "-2";"-12"};
{\ar@{}|\circlearrowright "-2";"10"};
{\ar@{}|\circlearrowright "0";"12"};
{\ar@{}|\circlearrowright "2";"14"};
{\ar@{}|\circlearrowright "4";"16"};
\endxy
\]
then by Lemma \ref{LemShift}, we obtain morphisms of standard triangles
\[
\xy
(-12,6)*+{X^{\prime}}="0";
(0,6)*+{M}="2";
(12,6)*+{Y}="4";
(24,6)*+{SX^{\prime}}="6";
(-12,-6)*+{X^{\prime}}="10";
(0,-6)*+{Y^{\prime}}="12";
(12,-6)*+{Z}="14";
(24,-6)*+{SX^{\prime}}="16";
{\ar^{\underline{\ell^{\prime}}} "0";"2"};
{\ar^{\underline{m^{\prime}}} "2";"4"};
{\ar^{\underline{v^{\prime}}} "4";"6"};
{\ar_{\underline{f^{\prime}}} "10";"12"};
{\ar_{\underline{g^{\prime}}} "12";"14"};
{\ar_{\underline{q^{\prime}}} "14";"16"};
{\ar@{=} "0";"10"};
{\ar^{\underline{m}} "2";"12"};
{\ar^{\underline{g}} "4";"14"};
{\ar@{=} "6";"16"};
{\ar@{}|\circlearrowright "0";"12"};
{\ar@{}|\circlearrowright "2";"14"};
{\ar@{}|\circlearrowright "4";"16"};
\endxy
\ \ \text{and}\ \ 
\xy
(-12,6)*+{X}="0";
(0,6)*+{M}="2";
(12,6)*+{Y^{\prime}}="4";
(24,6)*+{SX}="6";
(-12,-6)*+{X}="10";
(0,-6)*+{Y}="12";
(12,-6)*+{Z}="14";
(24,-6)*+{SX}="16";
{\ar^{\underline{\ell}} "0";"2"};
{\ar^{\underline{m}} "2";"4"};
{\ar^{\underline{v}} "4";"6"};
{\ar_{\underline{f}} "10";"12"};
{\ar_{\underline{g}} "12";"14"};
{\ar_{\underline{q}} "14";"16"};
{\ar@{=} "0";"10"};
{\ar^{\underline{m^{\prime}}} "2";"12"};
{\ar^{\underline{g^{\prime}}} "4";"14"};
{\ar@{=} "6";"16"};
{\ar@{}|\circlearrowright "0";"12"};
{\ar@{}|\circlearrowright "2";"14"};
{\ar@{}|\circlearrowright "4";"16"};
\endxy
\]

Thus it remains to show $S\underline{\ell^{\prime}}\circ\underline{q^{\prime}}+S\underline{\ell}\circ\underline{q}=0$.

\begin{claim}\label{ClaimForPropETR4}
There exist morphisms of conflations
\begin{eqnarray}
&\xy
(-24,6)*+{\Omega Z}="-2";
(-12,6)*+{X}="0";
(0,6)*+{Y}="2";
(12,6)*+{Z}="4";
(24,6)*+{\Sigma X}="6";
(-24,-6)*+{\Omega S_M}="-12";
(-12,-6)*+{M}="10";
(0,-6)*+{I_M}="12";
(12,-6)*+{S_M}="14";
(24,-6)*+{\Sigma M}="16";
{\ar^{f} "0";"2"};
{\ar^{g} "2";"4"};
{\ar^{h} "4";"6"};
{\ar_{\alpha_M} "10";"12"};
{\ar_{\beta_M} "12";"14"};
{\ar_{\gamma_M} "14";"16"};
{\ar^{} "-2";"0"};
{\ar^{\ell} "0";"10"};
{\ar^{r} "2";"12"};
{\ar^{s} "4";"14"};
{\ar^{\Sigma\ell} "6";"16"};
{\ar^{} "-12";"10"};
{\ar^{} "-2";"-12"};
{\ar@{}|\circlearrowright "-2";"10"};
{\ar@{}|\circlearrowright "0";"12"};
{\ar@{}|\circlearrowright "2";"14"};
{\ar@{}|\circlearrowright "4";"16"};
\endxy&\label{DoubleConf1}\\ 
&\xy
(-24,6)*+{\Omega Z}="-2";
(-12,6)*+{X^{\prime}}="0";
(0,6)*+{Y^{\prime}}="2";
(12,6)*+{Z}="4";
(24,6)*+{\Sigma X^{\prime}}="6";
(-24,-6)*+{\Omega S_M}="-12";
(-12,-6)*+{M}="10";
(0,-6)*+{I_M}="12";
(12,-6)*+{S_M}="14";
(24,-6)*+{\Sigma M}="16";
{\ar^{f^{\prime}} "0";"2"};
{\ar^{g^{\prime}} "2";"4"};
{\ar^{h^{\prime}} "4";"6"};
{\ar_{\alpha_M} "10";"12"};
{\ar_{\beta_M} "12";"14"};
{\ar_{\gamma_M} "14";"16"};
{\ar^{} "-2";"0"};
{\ar^{\ell^{\prime}} "0";"10"};
{\ar^{r^{\prime}} "2";"12"};
{\ar^{s^{\prime}} "4";"14"};
{\ar^{\Sigma\ell^{\prime}} "6";"16"};
{\ar^{} "-12";"10"};
{\ar^{} "-2";"-12"};
{\ar@{}|\circlearrowright "-2";"10"};
{\ar@{}|\circlearrowright "0";"12"};
{\ar@{}|\circlearrowright "2";"14"};
{\ar@{}|\circlearrowright "4";"16"};
\endxy&
\label{DoubleConf2}
\end{eqnarray}
such that
\[ r\circ m^{\prime}+r^{\prime}\circ m=\alpha_M. \]
Moreover, $s$ and $s^{\prime}$ satisfy
\begin{eqnarray}
\underline{s}=S\underline{\ell}\circ\underline{q}%
\quad\text{}and\quad
\underline{s^{\prime}}=S\underline{\ell^{\prime}}\circ\underline{q^{\prime}}.
\label{LastEq}
\end{eqnarray}
\end{claim}
Suppose Claim \ref{ClaimForPropETR4} is shown. Then by
\begin{eqnarray*}
(s+s^{\prime})\circ g\circ m^{\prime}&=&s\circ g\circ m^{\prime}+s^{\prime}\circ g^{\prime}\circ m\\
&=&\beta_M\circ r\circ m^{\prime}+\beta_M\circ r^{\prime}\circ m\\
&=&\beta_M\circ\alpha_M=0,
\end{eqnarray*}
there exists $w^{\prime}\in\mathcal{C}(\Sigma X^{\prime},S_M)$ such that $w^{\prime}\circ n^{\prime}=(s+s^{\prime})\circ g$. Thus by $((s+s^{\prime})-w^{\prime}\circ h^{\prime})\circ g=0$, there exists $w\in\mathcal{C}(\Sigma X,S_M)$ such that $w\circ h=s+s^{\prime}-w^{\prime}\circ h^{\prime}$, namely
\[ s+s^{\prime}=w\circ h+w^{\prime}\circ h^{\prime}. \]
Take a conflation
\[ \Omega Z\rightarrow X_0\rightarrow I_0\overset{\beta_0}{\longrightarrow}Z\overset{\gamma_0}{\longrightarrow}\Sigma X_0 \]
with $I_0\in\mathcal{I}_{\mathcal{D}}$. We have morphisms of conflations
\[
\xy
(-24,6)*+{\Omega Z}="-2";
(-12,6)*+{X_0}="0";
(0,6)*+{I_0}="2";
(12,6)*+{Z}="4";
(24,6)*+{\Sigma X_0}="6";
(-24,-6)*+{\Omega Z}="-12";
(-12,-6)*+{X}="10";
(0,-6)*+{Y}="12";
(12,-6)*+{Z}="14";
(24,-6)*+{\Sigma X}="16";
{\ar^{} "-2";"0"};
{\ar^{} "0";"2"};
{\ar^{\beta_0} "2";"4"};
{\ar^{\gamma_0} "4";"6"};
{\ar_{f} "10";"12"};
{\ar_{g} "12";"14"};
{\ar_{h} "14";"16"};
{\ar^{} "-2";"0"};
{\ar@{-->} "0";"10"};
{\ar@{-->} "2";"12"};
{\ar@{=} "4";"14"};
{\ar@{-->}^{{}^{\exists}\xi} "6";"16"};
{\ar^{} "-12";"10"};
{\ar@{=} "-2";"-12"};
{\ar@{}|\circlearrowright "-2";"10"};
{\ar@{}|\circlearrowright "0";"12"};
{\ar@{}|\circlearrowright "2";"14"};
{\ar@{}|\circlearrowright "4";"16"};
\endxy
\ ,\ 
\xy
(-24,6)*+{\Omega Z}="-2";
(-12,6)*+{X_0}="0";
(0,6)*+{I_0}="2";
(12,6)*+{Z}="4";
(24,6)*+{\Sigma X_0}="6";
(-24,-6)*+{\Omega Z}="-12";
(-12,-6)*+{X^{\prime}}="10";
(0,-6)*+{Y^{\prime}}="12";
(12,-6)*+{Z}="14";
(24,-6)*+{\Sigma X^{\prime}}="16";
{\ar^{} "-2";"0"};
{\ar^{} "0";"2"};
{\ar^{\beta_0} "2";"4"};
{\ar^{\gamma_0} "4";"6"};
{\ar_{f^{\prime}} "10";"12"};
{\ar_{g^{\prime}} "12";"14"};
{\ar_{h^{\prime}} "14";"16"};
{\ar^{} "-2";"0"};
{\ar@{-->} "0";"10"};
{\ar@{-->} "2";"12"};
{\ar@{=} "4";"14"};
{\ar@{-->}^{{}^{\exists}\xi^{\prime}} "6";"16"};
{\ar^{} "-12";"10"};
{\ar@{=} "-2";"-12"};
{\ar@{}|\circlearrowright "-2";"10"};
{\ar@{}|\circlearrowright "0";"12"};
{\ar@{}|\circlearrowright "2";"14"};
{\ar@{}|\circlearrowright "4";"16"};
\endxy
,
\]
and thus obtain
\[ s+s^{\prime}=(w\circ\xi+w^{\prime}\circ\xi^{\prime})\circ\gamma_0. \]
Since $\gamma_M\circ (s+s^{\prime})=(\Sigma\ell)\circ h+(\Sigma\ell^{\prime})\circ h^{\prime}=0$, we can conclude that $s+s^{\prime}$ factors through $I_M$ by {\rm (AC1)}.
\[
\xy
(-24,6)*+{\Omega Z}="-2";
(-12,6)*+{X_0}="0";
(0,6)*+{I_0}="2";
(4,2)*+{}="3";
(12,6)*+{Z}="4";
(24,6)*+{\Sigma X_0}="6";
(-24,-6)*+{\Omega S_M}="-12";
(-12,-6)*+{M}="10";
(0,-6)*+{I_M}="12";
(12,-6)*+{S_M}="14";
(24,-6)*+{\Sigma M}="16";
{\ar^{} "-2";"0"};
{\ar^{} "0";"2"};
{\ar^{\beta_0} "2";"4"};
{\ar^{\gamma_0} "4";"6"};
{\ar_{\alpha_M} "10";"12"};
{\ar_{\beta_M} "12";"14"};
{\ar_{\gamma_M} "14";"16"};
{\ar^{} "-2";"0"};
{\ar^{s+s^{\prime}} "4";"14"};
{\ar^{} "-12";"10"};
{\ar@{-->}^{} "4";"12"};
{\ar@{}|\circlearrowright "3";"14"};
\endxy
\]
By {\rm (\ref{LastEq})}, this means $S\underline{\ell^{\prime}}\circ\underline{q^{\prime}}+S\underline{\ell}\circ\underline{q}=0$, and Proposition \ref{PropTR4} can be shown.
Thus it suffices to show Claim \ref{ClaimForPropETR4}.
\begin{proof}[Proof of Claim \ref{ClaimForPropETR4}]
By $I_M\in\mathcal{I}_{\mathcal{D}}$, there exists $r\in\mathcal{Z}(Y,I_M)$ such that $r\circ f=\alpha_M\circ\ell$. By $(\alpha_M-r\circ m^{\prime})\circ\ell=0$, there exists $r^{\prime}\in\mathcal{Z}(Y^{\prime},I_M)$ such that $r^{\prime}\circ m=\alpha_M-r\circ m^{\prime}$. By {\rm (RTR3)}, there exist $s,s^{\prime}\in\mathcal{Z}(Z,S_M)$ such that {\rm (\ref{DoubleConf1})} and {\rm (\ref{DoubleConf2})} are morphisms of conflations.

By definition, $S_{\ell}$ is a morphism which gives a morphism of conflations as follows.
\[
\xy
(-24,6)*+{\Omega S_X}="-2";
(-12,6)*+{X}="0";
(0,6)*+{I_X}="2";
(12,6)*+{S_X}="4";
(24,6)*+{\Sigma X}="6";
(-24,-6)*+{\Omega S_M}="-12";
(-12,-6)*+{M}="10";
(0,-6)*+{I_M}="12";
(12,-6)*+{S_M}="14";
(24,-6)*+{\Sigma M}="16";
{\ar^{} "-2";"0"};
{\ar^{\alpha_X} "0";"2"};
{\ar^{\beta_X} "2";"4"};
{\ar^{\gamma_X} "4";"6"};
{\ar_{\alpha_M} "10";"12"};
{\ar_{\beta_M} "12";"14"};
{\ar_{\gamma_M} "14";"16"};
{\ar^{} "-2";"0"};
{\ar^{\ell} "0";"10"};
{\ar^{I_{\ell}} "2";"12"};
{\ar^{S_{\ell}} "4";"14"};
{\ar^{\Sigma\ell} "6";"16"};
{\ar^{} "-12";"10"};
{\ar^{} "-2";"-12"};
{\ar@{}|\circlearrowright "-2";"10"};
{\ar@{}|\circlearrowright "0";"12"};
{\ar@{}|\circlearrowright "2";"14"};
{\ar@{}|\circlearrowright "4";"16"};
\endxy
\]
Composing with {\rm (\ref{ConfDiagLast})}, we obtain a morphism of conflations
\[
\xy
(-24,6)*+{\Omega Z}="-2";
(-12,6)*+{X}="0";
(0,6)*+{Y}="2";
(12,6)*+{Z}="4";
(24,6)*+{\Sigma X}="6";
(-24,-6)*+{\Omega S_M}="-12";
(-12,-6)*+{M}="10";
(0,-6)*+{I_M}="12";
(12,-6)*+{S_M}="14";
(24,-6)*+{\Sigma M}="16";
{\ar^{f} "0";"2"};
{\ar^{g} "2";"4"};
{\ar^{h} "4";"6"};
{\ar_{\alpha_M} "10";"12"};
{\ar_{\beta_M} "12";"14"};
{\ar_{\gamma_M} "14";"16"};
{\ar^{} "-2";"0"};
{\ar^{\ell} "0";"10"};
{\ar|{_{I_{\ell}\circ p}} "2";"12"};
{\ar|{_{S_{\ell}\circ q}} "4";"14"};
{\ar^{\Sigma\ell} "6";"16"};
{\ar^{} "-12";"10"};
{\ar^{} "-2";"-12"};
{\ar@{}|\circlearrowright "-2";"10"};
{\ar@{}|\circlearrowright "0";"12"};
{\ar@{}|\circlearrowright "2";"14"};
{\ar@{}|\circlearrowright "4";"16"};
\endxy
\]
Thus, comparing with {\rm (\ref{DoubleConf1})}, we obtain $\underline{s}=S\underline{\ell}\circ \underline{q}$ by Lemma \ref{LemForSuspension}. Similarly for $s^{\prime}$.
\end{proof}
\end{proof}

By the above arguments, we obtain the following.
\begin{thm}\label{MainThm}
Let $\mathcal{C}$ be a pseudo-triangulated category satisfying Condition \ref{CondPTR}, and let $\mathcal{Z}\subseteq\mathcal{C}$ be an extension-closed subcategory, and let $\mathcal{D}\subseteq\mathcal{Z}$ is a full additive replete subcategory closed under finite direct summands in $\mathcal{Z}$. If $(\mathcal{C},\mathcal{Z},\mathcal{D})$ is Frobenius, then $\mathcal{Z}/\mathcal{I}_{\mathcal{D}}$ becomes a triangulated category.

In particular, if $\mathcal{Z}$ is Frobenius, then the stable category $\mathcal{Z}/\mathcal{I}$ becomes a triangulated category.
\end{thm}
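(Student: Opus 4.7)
The plan is to assemble the theorem from the ingredients already constructed in Section \ref{SecMainThm}. Working in $\mathcal{Z}/\mathcal{I}_{\mathcal{D}}$, I would take as suspension functor the additive functor $S\colon\mathcal{Z}/\mathcal{I}_{\mathcal{D}}\to\mathcal{Z}/\mathcal{I}_{\mathcal{D}}$ constructed from conflations of the form $\Omega S_X\to X\to I_X\to S_X\to\Sigma X$ with $I_X\in\mathcal{I}_{\mathcal{D}}$, and as class of distinguished triangles the class $\triangle$ of diagrams $X\to Y\to Z\to SX$ isomorphic to a standard triangle in the sense of the construction preceding Proposition \ref{PropTR1}.

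The first step is to verify that $S$ is an auto-equivalence. Since $(\mathcal{C},\mathcal{Z},\mathcal{D})$ is Frobenius, we have $\mathcal{I}_{\mathcal{D}}=\mathcal{P}_{\mathcal{D}}$, so the dual construction yields an additive functor $S^{\ast}\colon\mathcal{Z}/\mathcal{P}_{\mathcal{D}}\to\mathcal{Z}/\mathcal{P}_{\mathcal{D}}=\mathcal{Z}/\mathcal{I}_{\mathcal{D}}$, and by the proposition immediately preceding the definition of $\triangle$, $S$ and $S^{\ast}$ are mutually quasi-inverse. Thus $S$ is an auto-equivalence, as required for a triangulated structure.

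The second step is to invoke Propositions \ref{PropTR1}, \ref{PropTR2}, \ref{PropTR3} and \ref{PropTR4} to conclude that the axioms {\rm (TR1)}, {\rm (TR2)}, {\rm (TR3)} and {\rm (TR4)} hold for $(\mathcal{Z}/\mathcal{I}_{\mathcal{D}},S,\triangle)$. Combined with the first step, this shows that $(\mathcal{Z}/\mathcal{I}_{\mathcal{D}},S,\triangle)$ is a triangulated category, proving the main clause of the theorem. The final assertion is the special case $\mathcal{D}=\mathcal{Z}$ (where $\mathcal{I}_{\mathcal{D}}=\mathcal{I}$), which is permitted by Corollary \ref{CorMax}.

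The main obstacle in this program has already been surmounted in the preceding propositions (particularly the octahedral axiom in Proposition \ref{PropTR4}, whose proof makes essential use of Proposition \ref{PropOCT} and Condition \ref{CondPTR}), so at the level of the theorem itself there is little left to do beyond packaging. The only point one must still be careful about is the verification that the candidate suspension $S$ is genuinely well-defined up to unique isomorphism on $\mathcal{Z}/\mathcal{I}_{\mathcal{D}}$ and that it does not depend on the choice of the conflation $\Omega S_X\to X\to I_X\to S_X\to\Sigma X$; this is exactly the content of Lemma \ref{LemForSuspension}, so the proof reduces to citing it together with the four axiom verifications.
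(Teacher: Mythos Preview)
Your proposal is correct and mirrors the paper's own treatment: the theorem is stated after Propositions \ref{PropTR1}--\ref{PropTR4} with the remark ``By the above arguments, we obtain the following,'' so the proof is precisely the packaging of Lemma \ref{LemForSuspension}, the auto-equivalence of $S$ via $S^{\ast}$, and the four axiom verifications, exactly as you outline.
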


\section{Possibility of further generalizations}

In \cite{B}, for any triangulated category $\mathcal{C}$, Beligiannis showed that if we are given a {\it proper class of triangles} $\mathcal{E}$ on $\mathcal{C}$ satisfying some conditions similar to the Frobenius condition discussed in section \ref{SecFrobCond}, then $\mathcal{C}/\mathcal{P}(\mathcal{E})$ becomes triangulated (Theorem 7.2 in \cite{B}). Here, $\mathcal{P}(\mathcal{E})$ is the subcategory of \lq projectives', defined in a similar, but different manner (Definition 4.1 in \cite{B}). With that definition, $\mathcal{P}(\mathcal{\mathcal{E}})$ becomes closed under $\Sigma$, but this conflicts with Iyama-Yoshino's construction, in which the factoring category $\mathcal{D}$ satisfies $\mathcal{C}(\mathcal{D},\Sigma\mathcal{D})=0$. We wonder if there exists a general construction unifying the construction in \cite{B} and that in section \ref{SecMainThm}.

We also remark that there is another very general construction of a triangulated stable category. In \cite{BM}, Beligiannis and Marmaridis constructed a left triangulated category (in the sense of \cite{B} or \cite{BM}) from a pair $(\mathcal{C},\mathcal{X})$ of an additive category $\mathcal{C}$ and a contravariantly finite subcategory $\mathcal{X}$ assuming some existence condition on kernels (Theorem 2.12 in \cite{BM}). Therefore if $\mathcal{X}$ is functorially finite and satisfies some nice properties, it is expected that this resulting category becomes triangulated. In fact, Happel's construction is one of these cases (Remark 2.14 in \cite{BM}).
Although this existence condition is not satisfied by a triangulated category $\mathcal{C}$ unless we replace it by some \lq pseudo' one, we hope some unifying construction will be possible.

\end{document}